\DeclareMathAlphabet{\mathcal}{OMS}{cmsy}{m}{n}
\def\d{\mathsf{d}}
\def\gg{\mathfrak{g}}
\def\restrict#1{\,\vrule height1.1ex width.4pt
               depth1.2ex\lower1.0ex\hbox{\scriptsize $\:#1$}}
\newcommand{\operp}{\bigcirc\kern-7pt\perp}
\newcommand{\Ran}{\mathop{\mathrm{Ran}}\nolimits}
\newcommand{\rank}{\mathop\mathrm{rank}\nolimits}
\title{Generalized Dirichlet to Neumann operator on invariant differential forms and equivariant cohomology}
\author{Qusay S.A.~Al-Zamil \& James Montaldi}
\makeatletter \@addtoreset{equation}{section}
\newtheorem{theorem}{Theorem}[section]
\newtheorem{lemma}[theorem]{Lemma}
\newtheorem{corollary}[theorem]{Corollary}
\newtheorem{definition}[theorem]{Definition}
\newtheorem{remark}[theorem]{Remark}
\newenvironment{proof}%
        {\addvspace\baselineskip\noindent {\sc Proof:}\quad}%
        {\hfill \ding{114} \par\addvspace\baselineskip}
        {\addvspace\baselineskip\noindent {\sc Proof of #1:} \quad}%
        {\hfill \ding{144} \par\addvspace\baselineskip}
\begin{document}

 \maketitle

\begin{abstract}

In \cite{Belishev2}, Belishev and Sharafutdinov  consider a compact
Riemannian manifold $M$ with boundary $\partial M$. They define a
generalized Dirichlet to Neumann (DN) operator $\Lambda$ on all
forms on the boundary  and they prove that the real additive de Rham
cohomology structure of the manifold in question is completely
determined by $\Lambda$. This shows that the DN map $\Lambda$
inscribes into the list of objects of algebraic topology. In this
paper, we suppose $G$ is a torus acting by isometries on $M$. Given
$X$ in the Lie algebra of $G$ and the corresponding vector field $X_M$
on $M$, one defines Witten's inhomogeneous coboundary operator
$\d_{X_M} = \d+\iota_{X_M}$ on invariant forms on $M$. The main
purpose is to adapt Belishev and Sharafutdinov's boundary data to
invariant forms in terms of the operator $\d_{X_M}$ and its adjoint
$\delta_{X_M}$. In other words, we define an operator
$\Lambda_{X_M}$ on invariant forms on the boundary which we call the
$X_M$-DN map and using this we recover the long exact
$X_M$-cohomology sequence of the topological pair $(M,\partial M)$
from an isomorphism with the long exact sequence formed from our
boundary data. We then show that $\Lambda_{X_M}$ completely
determines the free part of the relative and absolute equivariant
cohomology groups of $M$ when the set of zeros of the corresponding
vector field $X_M$ is equal to the fixed point set $F$ for the
$G$-action. In addition, we partially determine the mixed cup
product (the ring structure) of $X_M$-cohomology groups from
$\Lambda_{X_M}$. These results explain to what extent the
equivariant topology of the manifold in question is determined by
the $X_M$-DN map $\Lambda_{X_M}$. Finally, we illustrate the
connection between Belishev and Sharafutdinov's boundary data on
$\partial F$ and ours on $\partial M$.

\medskip

\noindent\emph{Keywords}: Algebraic Topology, equivariant topology,
manifolds with boundary, equivariant cohomology, cup product (ring
structure ), group actions, Dirichlet to Neumann operator.

\noindent\emph{MSC 2010}: 58J32, 57R19, 57R91, 55N91

\end{abstract}

\section{Introduction}
 The classical Dirichlet-to-Neumann (DN)
operator $\Lambda_{cl}:C^{\infty}(\partial M)\longrightarrow
C^{\infty}(\partial M)$ is defined by
$\Lambda_{cl}\theta=\partial\omega/\partial \nu,$ where $\omega $ is
the solution to the Dirichlet problem $$ \Delta \omega=0, \quad
\omega\mid_{\partial M}=\theta$$ and $\nu$ is the unit outer normal
to the boundary. In the scope of inverse problems of reconstructing
a manifold from the boundary measurements, the following question is
of great theoretical and applied interest \cite{Belishev2}:

\emph{ ``To what extent are the topology and geometry of $M$
determined by the Dirichlet-to-Neumann map''?}

In this paper we are interested in the topology aspect while the
geometry aspect of the above question has been studied in
\cite{Lassas} and \cite{Lionheart}.

Much effort has been made to answer the topology aspect of this
question. For instance, in the case of a two-dimensional manifold
$M$ with a connected boundary, an explicit formula is obtained which
expresses the Euler characteristic of $M$ in terms of $\Lambda_{cl}$
where the Euler characteristic completely determines the topology of
$M$ in this case \cite{Belishev1}. In the three-dimensional case
\cite{Belishev}, some formulas are obtained which express the Betti
numbers $\beta_1(M)$ and $\beta_2(M)$ in terms of $\Lambda_{cl}$ and
$\overrightarrow{\Lambda}:C^{\infty}(T(\partial M))\longrightarrow
C^{\infty}(T(\partial M)).$

 For more topological aspects, Belishev and Sharafutdinov \cite{Belishev2} prove that the real additive de Rham cohomology  of a compact, connected, oriented smooth Riemannian  manifold $M$ of dimension $n$
with boundary is completely determined by its boundary data
$(\partial M,\Lambda)$ where $\Lambda:\Omega^{k}(\partial
M)\longrightarrow \Omega^{n-k-1}(\partial M)$ is a generalization of
the classical Dirichlet-to-Neumann operator $\Lambda_{cl}$ to the
space of differential forms. More precisely, they define the DN
operator $\Lambda$ as follows \cite{Belishev2}: given $\theta \in
\Omega^{k}(\partial M)$, the boundary value problem  $$ \Delta\omega
= 0, \quad i^{*}\omega = \theta, \quad i^{*}(\delta\omega)= 0$$ is
solvable and the operator $\Lambda$ is given by the formula
$$\Lambda\theta=i^{*}(\star \d\omega) $$ where $i^*$ is the pullback
by the inclusion map $i:\partial M \hookrightarrow M$. Here $\delta$
is the formal adjoint of $\d$ relative to the $L^2$-inner product
$$\left<\alpha,\,\beta\right> = \int_M\alpha\wedge(\star\beta)$$
which is defined on $\Omega^k(M)$, and
$\star:\Omega^k\to\Omega^{n-k}$ is the Hodge star operator.

More concretely, there are two distinguished finite dimensional
subspaces of $\mathcal{H}^{k}(M)=\ker\d\cap\ker\delta,$ whose
elements are called Dirichlet and Neumann harmonic fields
respectively, namely
$$\mathcal{H}^{k}_{D}(M) = \{\lambda\in \mathcal{H}^{k}(M)\mid
i^*\lambda=0 \}, \quad
  \mathcal{H}^{k}_{N}(M) = \{\lambda\in \mathcal{H}^{k}(M)\mid i^{*}\star\lambda=0 \}
 .$$  The dimensions of these spaces are given by $$\dim
\mathcal{H}^{k}_{D}(M)=\dim\mathcal{H}^{n-k}_{N}(M)=\beta_{k}(M)$$
where $\beta_{k}(M)$ is the $k$th Betti number \cite{Schwarz} . They
prove the following theorem
\begin{theorem}[Belishev-Sharafutdinov \cite{Belishev2}]\label{B-sh them1 }
For any $0\leq k \leq n-1,$ the range of the operator $$\Lambda+
(-1)^{nk+k+n}\d\Lambda^{-1} \d:\Omega^{k}(\partial M)\longrightarrow
\Omega^{n-k-1}(\partial M)$$ is $i^{*}\mathcal{H}^{n-k-1}_{N}(M)$.

\end{theorem}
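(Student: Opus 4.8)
\emph{Strategy and Step 1 (the correspondence).} I would realise the boundary data as restrictions of harmonic fields on $M$, prove the two inclusions separately, and let the Hodge--Morrey--Friedrichs decomposition carry the decisive surjectivity. Fix $\theta\in\Omega^{k}(\partial M)$ and let $\omega$ solve $\Delta\omega=0$, $i^{*}\omega=\theta$, $i^{*}\delta\omega=0$. First I would note $\delta\omega=0$: the form $\delta\omega$ satisfies $\Delta(\delta\omega)=0$, $i^{*}(\delta\omega)=0$ and $i^{*}\delta(\delta\omega)=0$, so Green's formula forces $\delta\omega\in\mathcal{H}^{k-1}_{D}(M)$, whence $\|\delta\omega\|^{2}=\langle\d\delta\omega,\omega\rangle-\int_{\partial M}i^{*}\delta\omega\wedge i^{*}\star\omega=0$ since $\d\delta\omega=0$ and $i^{*}\delta\omega=0$. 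Thus $\d\omega$ is closed and coclosed, i.e. $\d\omega\in\mathcal{H}^{k+1}(M)$, and $\nu_{\theta}:=\star\d\omega$ is a coexact harmonic field in $\mathcal{H}^{n-k-1}(M)$ with $i^{*}\nu_{\theta}=\Lambda\theta$ and $i^{*}\star\nu_{\theta}=(-1)^{(k+1)(n-k-1)}\d\theta$. The same computation one degree lower identifies $\Ran(\Lambda:\Omega^{n-k-2}(\partial M)\to\Omega^{k+1}(\partial M))$ with the restrictions $i^{*}\rho$ of coexact harmonic fields $\rho\in\mathcal{H}^{k+1}(M)$.

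\emph{Step 2 (the inclusion $\subseteq$).} For $\theta$ with $\d\theta$ in that range, choose $\xi$ with $\Lambda\xi=\d\theta$ and let $\omega'$ solve the boundary value problem for $\xi$; then $\mu:=\d\omega'\in\mathcal{H}^{n-k-1}(M)$ is exact, $i^{*}\mu=\d\xi=\d\Lambda^{-1}\d\theta$ and $i^{*}\star\mu=\Lambda\xi=\d\theta$. Hence $\nu_{\theta}-(-1)^{(k+1)(n-k-1)}\mu$ is a harmonic field with vanishing $i^{*}\star$, so it lies in $\mathcal{H}^{n-k-1}_{N}(M)$, and its restriction equals $i^{*}\nu_{\theta}-(-1)^{(k+1)(n-k-1)}i^{*}\mu=\Lambda\theta+(-1)^{nk+k+n}\d\Lambda^{-1}\d\theta$ once one checks the sign identity $(-1)^{(k+1)(n-k-1)+1}=(-1)^{nk+k+n}$. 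This also shows the operator is well defined: although $\d\Lambda^{-1}\d\theta$ is determined only modulo $i^{*}\mathcal{H}^{n-k-1}_{N}(M)$, since $\ker\Lambda$ is carried into $\mathcal{H}^{n-k-1}_{N}(M)$ by harmonic extension followed by $\d$ and $i^{*}$, the sum is forced to be $i^{*}$ of the unique Neumann representative of $[\nu_{\theta}]=[\star\d\omega]\in H^{n-k-1}(M)$.

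\emph{Step 3 (the inclusion $\supseteq$ and the main obstacle).} Conversely I would show that $\nu_{\theta}=\star\d\omega$ sweeps out all coexact harmonic fields in $\mathcal{H}^{n-k-1}(M)$: such a field is $\star$ of an exact harmonic $(k+1)$-field $\d\alpha$, the potential $\alpha$ can be chosen coclosed by the Hodge--Morrey decomposition, and $\theta:=i^{*}\alpha$ is then admissible by the Friedrichs decomposition and Step 1. Since $i^{*}$ is injective on $\mathcal{H}^{n-k-1}_{N}(M)$ (a form in $\mathcal{H}^{n-k-1}_{D}\cap\mathcal{H}^{n-k-1}_{N}$ has vanishing Cauchy data, hence is zero), the range of the operator is $i^{*}$ of the set of Neumann representatives of the de Rham classes $[\nu]$ with $\nu$ coexact harmonic. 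So the theorem reduces to the assertion that every class of $H^{n-k-1}(M)$ has a coexact harmonic representative, i.e. $\mathcal{H}^{n-k-1}(M)=\bigl(\mathcal{H}^{n-k-1}(M)\cap\delta\Omega^{n-k}(M)\bigr)+\bigl(\mathcal{H}^{n-k-1}(M)\cap\d\Omega^{n-k-2}(M)\bigr)$. This splitting of harmonic fields — and, along with it, the exact description of the domain of $\d\Lambda^{-1}\d$ — is the part I expect to be genuinely delicate; it should come out of the Hodge--Morrey and Friedrichs decompositions together with the identifications $\mathcal{H}^{k}_{D}(M)\cong H^{k}(M,\partial M)$ and $\mathcal{H}^{k}_{N}(M)\cong H^{k}(M)$.
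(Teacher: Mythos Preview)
Your approach is essentially the same as the paper's. The paper does not prove this cited statement directly but proves its $X_M$-equivariant analog (Theorem~\ref{thm.main result}), whose proof at $X_M=0$ specialises to this one: for $\subseteq$ one applies the Friedrichs decomposition to $\d\omega$ (your subtraction of the exact harmonic field $\mu$ from $\nu_\theta=\star\d\omega$ is the Hodge-dual of the paper's $\d\omega=\delta\alpha+\lambda_D$), and for $\supseteq$ one writes a given $\lambda_N$ as a sum of an exact and a coexact harmonic field, which is exactly the splitting you isolate.

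One correction to your Step~3: the splitting $\mathcal{H}=\mathcal{H}_{\mathrm{ex}}+\mathcal{H}_{\mathrm{co}}$ does \emph{not} follow from the Hodge--Morrey and Friedrichs decompositions together with the identifications $\mathcal{H}^{k}_{D}\cong H^{k}(M,\partial M)$ and $\mathcal{H}^{k}_{N}\cong H^{k}(M)$. Those identifications are themselves consequences of Friedrichs, and Friedrichs gives $(\mathcal{H}_{\mathrm{ex}})^{\perp}\cap\mathcal{H}=\mathcal{H}_{N}$ and $(\mathcal{H}_{\mathrm{co}})^{\perp}\cap\mathcal{H}=\mathcal{H}_{D}$; taking orthogonal complements shows that $\mathcal{H}=\mathcal{H}_{\mathrm{ex}}+\mathcal{H}_{\mathrm{co}}$ is \emph{equivalent} to $\mathcal{H}_{N}\cap\mathcal{H}_{D}=\{0\}$. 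That is precisely the unique-continuation statement you already invoked earlier in Step~3 (``vanishing Cauchy data, hence zero''), and the paper establishes it separately as Theorem~\ref{Thm.interseactions} via Aronszajn's strong unique continuation theorem. So you have the needed ingredient in hand; you just misidentified its source --- it is the genuine analytic input, and the decomposition theory alone is insufficient.
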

But $i^{*}\mathcal{H}^{k}_{N}(M) \cong\mathcal{H}^{k}_{N}(M)\cong
H^{k}(M)$. Hence, $(\Lambda+ (-1)^{nk+k+1}\d\Lambda^{-1} \d)
\Omega^{n-k-1}(\partial M)\cong H^{k}(M)\cong\mathcal{H}^{k}_{N}(M)
$. Using, Poincar\'{e}-Lefscetz duality, $H^{k}(M)\cong
H^{n-k}(M,\partial M)$. So the above theorem immediately implies
that the data $(\partial M,\Lambda)$ determines the absolute and
relative de Rham cohomology groups.

  In addition, they present the following theorem which gives the lower bound for the Betti numbers of the manifold $M$ through the DN
operator $\Lambda.$
\begin{theorem}[Belishev-Sharafutdinov \cite{Belishev2}]\label{B-sh them2 }
The kernel $\ker\Lambda$ contains the space
$\mathcal{E}^{k}(\partial M)$ of exact forms and $$ \dim
[{\ker\Lambda^k}/{\mathcal{E}^{k}(\partial
 M)}]\leq \min\{\beta_{k}(\partial M),\beta_{k}(M)\}$$
 where $\beta_{k}(\partial M)$ and $\beta_{k}(M)$ are the Betti
 numbers, and $\Lambda^k$ is the restriction of $\Lambda$ to $\Omega^k(\partial M)$.
\end{theorem}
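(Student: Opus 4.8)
The plan is to first dispose of the easy inclusion $\mathcal{E}^{k}(\partial M)\subseteq\ker\Lambda$, then identify $\ker\Lambda^{k}$ exactly as $i^{*}\mathcal{H}^{k}(M)$, and finally read off both upper bounds by sandwiching this space between ones of known dimension. For the inclusion: given $\theta=\d\eta$ with $\eta\in\Omega^{k-1}(\partial M)$ (the case $k=0$ is vacuous), I would solve the same boundary value problem one degree down, i.e.\ pick $\phi\in\Omega^{k-1}(M)$ with $\Delta\phi=0$, $i^{*}\phi=\eta$ and $i^{*}\delta\phi=0$. Then $\omega:=\d\phi$ satisfies $\Delta\omega=\d\Delta\phi=0$, $i^{*}\omega=\d\,i^{*}\phi=\theta$, and, since $\Delta\phi=0$ forces $\delta\d\phi=-\d\delta\phi$, also $i^{*}\delta\omega=i^{*}\delta\d\phi=-\d\,i^{*}\delta\phi=0$. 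So $\omega$ is admissible for $\theta$ and $\Lambda\theta=i^{*}(\star\d\omega)=i^{*}(\star\d\d\phi)=0$.

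For the identification $\ker\Lambda^{k}=i^{*}\mathcal{H}^{k}(M)$, let $\theta\in\ker\Lambda^{k}$ and let $\omega$ solve the problem defining $\Lambda\theta$. I would apply Green's formula $\langle\d\alpha,\beta\rangle=\langle\alpha,\delta\beta\rangle+\int_{\partial M}i^{*}\alpha\wedge i^{*}(\star\beta)$ twice, with $(\alpha,\beta)=(\omega,\d\omega)$ and with $(\alpha,\beta)=(\delta\omega,\omega)$, and add, so that the interior terms combine to $\langle\omega,\Delta\omega\rangle$ and there remains
\[
\|\d\omega\|^{2}+\|\delta\omega\|^{2}=\langle\omega,\Delta\omega\rangle+\int_{\partial M}\bigl(i^{*}\omega\wedge i^{*}(\star\d\omega)-i^{*}(\delta\omega)\wedge i^{*}(\star\omega)\bigr).
\]
Here $\Delta\omega=0$, the first boundary integrand equals $i^{*}\omega\wedge\Lambda\theta=0$, and the second vanishes because $i^{*}\delta\omega=0$ is part of the boundary value problem; hence $\d\omega=\delta\omega=0$, so $\omega\in\mathcal{H}^{k}(M)$ and $\theta=i^{*}\omega$. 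Conversely, any $\omega\in\mathcal{H}^{k}(M)$ is closed and coclosed, hence itself admissible for $\theta=i^{*}\omega$, and then $\Lambda\theta=i^{*}(\star\d\omega)=0$; the nonuniqueness of $\omega$ lives in the Dirichlet harmonic fields, which have vanishing $i^{*}$ and $\d$, so $\Lambda$ is well defined and this causes no trouble.

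The two bounds then follow by sandwiching. Every element of $i^{*}\mathcal{H}^{k}(M)$ is the pullback of a closed form, hence closed on $\partial M$; so, writing $\mathcal{Z}^{k}(\partial M)$ for the closed $k$-forms and using $\mathcal{E}^{k}(\partial M)\subseteq\ker\Lambda^{k}\subseteq\mathcal{Z}^{k}(\partial M)$, the quotient $\ker\Lambda^{k}/\mathcal{E}^{k}(\partial M)$ is a subspace of $\mathcal{Z}^{k}(\partial M)/\mathcal{E}^{k}(\partial M)=H^{k}(\partial M)$ and has dimension at most $\beta_{k}(\partial M)$. For the other bound, compose the surjection $i^{*}\colon\mathcal{H}^{k}(M)\to\ker\Lambda^{k}$ with the quotient map onto $\ker\Lambda^{k}/\mathcal{E}^{k}(\partial M)$; if $\omega=\d\psi$ is exact on $M$ then $i^{*}\omega=\d(i^{*}\psi)$ is exact on $\partial M$, so this composite vanishes on $\mathcal{H}^{k}(M)\cap\im\d$ and factors through $\mathcal{H}^{k}(M)/(\mathcal{H}^{k}(M)\cap\im\d)$, which the de Rham class embeds into $H^{k}(M)$ (harmonic fields being closed). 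Hence $\dim\bigl(\ker\Lambda^{k}/\mathcal{E}^{k}(\partial M)\bigr)\le\beta_{k}(M)$, and combining the two bounds gives $\le\min\{\beta_{k}(\partial M),\beta_{k}(M)\}$.

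The analytic ingredients — solvability of the Hodge-Laplacian boundary value problems and the boundary-term form of Green's formula — are standard, so I expect no difficulty there. The one genuinely delicate point is the $\beta_{k}(M)$ bound: since $\mathcal{H}^{k}(M)$ is infinite-dimensional on a manifold with boundary, bounding $\dim i^{*}\mathcal{H}^{k}(M)$ directly is impossible, and the real content is that quotienting by $\mathcal{E}^{k}(\partial M)$ collapses precisely the part of $\mathcal{H}^{k}(M)$ that is exact on $M$; getting that two-quotient bookkeeping exactly right is the step that needs care.
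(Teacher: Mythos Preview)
Your argument is correct. The paper does not itself prove this cited result, but it proves the $X_M$-analogue (Theorem~\ref{thm.low bounds}); comparing against that proof, the identification $\ker\Lambda^{k}=i^{*}\mathcal{H}^{k}(M)$ and the $\beta_{k}(\partial M)$ bound proceed essentially as you do. The difference lies in the $\beta_{k}(M)$ bound. The paper's route is to invoke the Friedrichs decomposition~(\ref{F2}) and Lemma~\ref{lema.1} to obtain $\ker\Lambda^{k}=\mathcal{E}^{k}(\partial M)+i^{*}\mathcal{H}^{k}_{N}(M)$, and then to use $\dim i^{*}\mathcal{H}^{k}_{N}(M)=\beta_{k}(M)$, which in turn rests on the unique-continuation result (Theorem~\ref{Thm.interseactions} and Corollary~\ref{coro.2.7}). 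Your route is more elementary: the surjection $\mathcal{H}^{k}(M)\twoheadrightarrow\ker\Lambda^{k}/\mathcal{E}^{k}(\partial M)$ annihilates exact harmonic fields and hence factors through the image of $\mathcal{H}^{k}(M)$ in $H^{k}(M)$, bypassing Friedrichs and unique continuation entirely. What the paper's approach buys is the sharper structural identity $\ker\Lambda^{k}=\mathcal{E}^{k}(\partial M)+i^{*}\mathcal{H}^{k}_{N}(M)$, which it needs elsewhere; what yours buys is independence from that deeper analytic input.
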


But at the end of their paper, they posed the following topological
open problem:

\emph{``Can the multiplicative structure of cohomologies be
recovered from our data $(\partial M,\Lambda)$?''}.

In 2009, Shonkwiler in \cite{Clay2} gave a partial answer to the
above question. He presents a well-defined map which is
\begin{equation}\label{clay.map}
(\phi,\psi)\longmapsto\Lambda((-1)^k \phi\wedge\Lambda^{-1}\psi),
\quad \forall (\phi,\psi) \in i^{*}\mathcal{H}^{k}_{N}(M)\times
i^{*}\star\mathcal{H}^{l}_{D}(M)
\end{equation}
 and then uses it to give a partial answer to that question.
More precisely, by using the classical wedge product between the
differential forms, he considers the mixed cup product between the
absolute cohomology $ H^{k}(M,\mathbb{R})$ and the relative
cohomology $ H^{l}(M,\partial M,\mathbb{R}) $, i.e.
$$\cup: H^{k}(M,\mathbb{R})\times H^{l}(M,\partial M,\mathbb{R}) \longrightarrow H^{k+l}(M,\partial
M,\mathbb{R})$$ and then he restricts $ H^{l}(M,\partial
M,\mathbb{R}) $ to come from the boundary subspace which is defined
by DeTurck and Gluck \cite{Gluck} as the subspace of exact forms
which satisfy the Dirichlet boundary condition (i.e. $i^*$ of these
exact forms are zero)
and then he presents the following theorem as a partial answer to
Belishev and Sharafutdinov's question:

\begin{theorem}[Shonkwiler  \cite{Clay2}]
The boundary data $(\partial M,\Lambda)$ completely determines the
mixed cup product in terms of the map (\ref{clay.map}) when the
relative cohomology class is restricted to come from the boundary
subspace.
\end{theorem}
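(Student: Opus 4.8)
The plan is to translate the statement into Hodge theory on the manifold with boundary. By the Hodge--Morrey--Friedrichs decomposition one has the identifications $H^{k}(M;\RR)\cong\mathcal{H}^{k}_{N}(M)$ and $H^{l}(M,\partial M;\RR)\cong\mathcal{H}^{l}_{D}(M)$, and under these the mixed cup product is computed by wedging harmonic representatives: for $\Phi\in\mathcal{H}^{k}_{N}(M)$ and $\Psi\in\mathcal{H}^{l}_{D}(M)$ the form $\Phi\wedge\Psi$ is closed and $i^{*}(\Phi\wedge\Psi)=i^{*}\Phi\wedge i^{*}\Psi=0$, so it is a relative $(k+l)$-cocycle representing $[\Phi]\cup[\Psi]\in H^{k+l}(M,\partial M;\RR)$, and the corresponding class is recovered from its $\mathcal{H}^{k+l}_{D}(M)$-component, which I denote $H$. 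Since $i^{*}\star$ is injective on $\mathcal{H}^{k+l}_{D}(M)$ (a Dirichlet field annihilated by $i^{*}\star$ is also a Neumann field, hence vanishes on a connected manifold with nonempty boundary), it suffices to prove that the boundary operator in (\ref{clay.map}) reproduces $i^{*}\star H$, where $\phi=i^{*}\Phi$ and $\psi=i^{*}\star\Psi$. Finally, that the relative class is restricted to come from the boundary subspace means precisely that $[\Psi]$ lies in the image of the connecting homomorphism $H^{l-1}(\partial M)\to H^{l}(M,\partial M)$, equivalently that $\Psi$ is exact on $M$.

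The heart of the matter is the interpretation of $\Lambda^{-1}$. Recall $\Lambda\alpha=i^{*}(\star\d\omega)$, where $\omega$ solves $\Delta\omega=0$, $i^{*}\omega=\alpha$, $i^{*}(\delta\omega)=0$; two solutions differ by a Dirichlet harmonic field (a Green's-formula argument shows any solution of the homogeneous problem is a harmonic field with vanishing tangential part), which is closed, so $\d\omega$, and hence $\Lambda\alpha$, depends only on $\alpha$. Now let $\psi=i^{*}\star\Psi$ with $[\Psi]$ in the boundary subspace. Then $\Psi$ is exact and coclosed, so by the Hodge--Morrey decomposition on $M$ it is coexact and has a coclosed primitive $\eta\in\Omega^{l-1}(M)$, i.e.\ $\d\eta=\Psi$ and $\delta\eta=0$. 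This $\eta$ has $\Delta\eta=0$ and $i^{*}\delta\eta=0$, so it solves the defining boundary value problem of $\Lambda$ with tangential data $i^{*}\eta$, whence $\Lambda(i^{*}\eta)=i^{*}\star\d\eta=i^{*}\star\Psi=\psi$; thus $i^{*}\eta$ is a legitimate value of $\Lambda^{-1}\psi$, and the existence of $\eta$ is exactly what the boundary-subspace hypothesis supplies. The independence of the expression $\Lambda\bigl((-1)^{k}\phi\wedge\Lambda^{-1}\psi\bigr)$ from the choice of preimage is Shonkwiler's well-definedness lemma: two choices of $\Lambda^{-1}\psi$ differ by an element of $\ker\Lambda$, which contains the exact forms $\mathcal{E}^{l-1}(\partial M)$, and using $\d\phi=0$ together with the Green-type formula for $\Lambda$ one verifies $\phi\wedge\ker\Lambda\subseteq\ker\Lambda$.

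To conclude, since $\Phi$ is closed, $\d\bigl((-1)^{k}\Phi\wedge\eta\bigr)=\Phi\wedge\d\eta=\Phi\wedge\Psi$, so the relative cocycle $\Phi\wedge\Psi$ representing $[\Phi]\cup[\Psi]$ has the explicit primitive $(-1)^{k}\Phi\wedge\eta$, whose tangential part on $\partial M$ is $(-1)^{k}\phi\wedge i^{*}\eta=(-1)^{k}\phi\wedge\Lambda^{-1}\psi$. Let $\sigma$ be the solution of the $\Lambda$-boundary value problem with this tangential data; by the definition of $\Lambda$, $\Lambda\bigl((-1)^{k}\phi\wedge\Lambda^{-1}\psi\bigr)=i^{*}\star\d\sigma$. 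It remains to identify $i^{*}\star\d\sigma$ with $i^{*}\star H$. Writing $\Phi\wedge\Psi=H+\d\rho$ by Hodge--Morrey--Friedrichs (the complement of the harmonic part of a closed relative cocycle being a relative coboundary), one pairs $i^{*}\star\d\sigma$ against an arbitrary $i^{*}\Xi$ with $\Xi\in\mathcal{H}^{n-k-l}_{N}(M)$: Belishev and Sharafutdinov's Green's formula for $\Lambda$ — which makes $\Lambda$ formally self-adjoint for the wedge pairing $(\theta,\xi)\mapsto\int_{\partial M}\theta\wedge\xi$ — shows the $\d\rho$-contribution vanishes, and the nondegeneracy of the Poincar\'e--Lefschetz pairing $H^{k+l}(M,\partial M;\RR)\times H^{n-k-l}(M;\RR)\to\RR$ then forces $i^{*}\star\d\sigma=i^{*}\star H$. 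This identifies the map (\ref{clay.map}) with the mixed cup product on the boundary subspace, which is the assertion of the theorem.

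The main obstacle is this last identification: showing that passing the boundary value $(-1)^{k}\phi\wedge\Lambda^{-1}\psi$ through $\Lambda$ — that is, solving the Dirichlet-to-Neumann boundary value problem afresh for $\sigma$ and then applying $i^{*}\star\d$ — returns only the Dirichlet-harmonic part $H$ of $\Phi\wedge\Psi$ and discards the coboundary term $\d\rho$. This calls for careful bookkeeping with the Hodge--Morrey--Friedrichs decomposition on a manifold with boundary and with the Green-type identity for $\Lambda$, and it is here that the restriction to the boundary subspace is indispensable: it is precisely that hypothesis which furnishes the coclosed primitive $\eta$ on which the whole computation rests.
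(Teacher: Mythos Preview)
Your setup matches the paper's own argument (the paper does not prove this cited theorem directly but proves its $X_M$-generalization, Theorem~\ref{thm.comm.ring}, whose specialization $X_M=0$ gives the result): identify the harmonic representatives $\Phi\in\mathcal{H}^k_N$ and $\Psi\in\mathcal{H}^l_D$, produce a coclosed primitive $\eta$ of $\Psi$ via Hodge--Morrey so that $i^*\eta$ serves as $\Lambda^{-1}\psi$, and then compute $\Lambda\bigl((-1)^k\phi\wedge i^*\eta\bigr)$. Where you diverge is the final identification of this with $i^*\star H$.

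The paper's move there is cleaner and different from yours. It chooses (again by Hodge--Morrey) a primitive $\sigma_0$ of $H$ that itself solves the $\Lambda$-boundary value problem, so that $\Lambda(i^*\sigma_0)=i^*\star H$ by definition. Since $\d\bigl((-1)^k\Phi\wedge\eta-\sigma_0-\xi\bigr)=\Phi\wedge\Psi-H-\d\xi=0$, this difference is closed on $M$; its pullback to $\partial M$ therefore lies in $\mathcal{E}(\partial M)+i^*\mathcal{H}(M)=\ker\Lambda$ (Lemma~\ref{lema.4} and eq.~(\ref{eq.3.6})), and together with $i^*\xi=0$ one reads off $\Lambda\bigl((-1)^k\phi\wedge i^*\eta\bigr)=\Lambda(i^*\sigma_0)=i^*\star H$ directly.

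Your duality argument, as written, has a gap. Pairing $i^*\star\d\sigma$ against $i^*\Xi$ on $\partial M$ is a degree mismatch (both have degree $n-k-l$ on an $(n{-}1)$-manifold), and the self-adjointness of $\Lambda$ only yields $\int_{\partial M}i^*\Xi\wedge\Lambda\theta=\pm\int_{\partial M}\theta\wedge\Lambda(i^*\Xi)=0$ since $i^*\Xi\in\ker\Lambda$, which is vacuous. What is missing is the easy but essential observation that $i^*\d\sigma=\d\bigl((-1)^k\phi\wedge i^*\eta\bigr)=0$ (because $\d\phi=0$ and $\d(i^*\eta)=i^*\Psi=0$), so in fact $\d\sigma\in\mathcal{H}^{k+l}_D(M)$. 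With that in hand the correct pairing is the \emph{bulk} one: Stokes gives $\int_M\d\sigma\wedge\Xi=\int_{\partial M}i^*\sigma\wedge i^*\Xi=\int_M H\wedge\Xi$ for every $\Xi\in\mathcal{H}^{n-k-l}_N(M)$, and nondegeneracy of the Poincar\'e--Lefschetz pairing $\mathcal{H}^{k+l}_D\times\mathcal{H}^{n-k-l}_N\to\RR$ then forces $\d\sigma=H$, hence $i^*\star\d\sigma=i^*\star H$. So your route can be completed, but not by the mechanism you invoke.
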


 From another hand, in \cite{Our paper}, we consider a compact,
oriented, smooth Riemannian manifold $M$ with boundary and we
suppose $G$ is a torus acting by isometries on $M$ and denote by
$\Omega^{k}_{G}$ the $k$-forms invarient under action of $G$. Given
$X$ in the Lie algebra of $G$ and corresponding vector field $X_M$
on $M$, we consider Witten's coboundary operator $\d_{X_M} =
\d+\iota_{X_M}$. This operator is no longer homogeneous in the
degree of the smooth invariant form on $M$: if
$\omega\in\Omega^{k}_{G}$ then $\d_{X_M}\omega \in
\Omega^{k+1}_{G}\oplus\Omega^{k-1}_{G}$. Note then that
$\d_{X_M}:\Omega^{\pm}_{G}\to\Omega^{\mp}_{G}$, where
$\Omega^\pm_{G}$ is the space of invariant forms of even ($+$) or
odd ($-$) degree. Let $\delta_{X_M}$ be the adjoint of $d_{X_M}$ and
the resulting \emph{Witten-Hodge-Laplacian} is
$\Delta_{X_M}=(\d_{X_M}+\delta_{X_M})^2 = \d_{X_M}\delta_{X_M} +
\delta_{X_M}\d_{X_M}$.


Because the forms are invariant, it is easy to see that
$\d^2_{X_M}=0$ (see \cite{Our paper} for details). In this setting,
we define two types of $X_M$-cohomology, the absolute
$X_M$-cohomology $H^{\pm}_{X_M}(M)$ and the relative
$X_M$-cohomology $H^{\pm}_{X_M}(M,\partial M)$. The first is the
cohomology of the complex $(\Omega_G,\,\d_{X_M})$, while the second
is the cohomology of the subcomplex $(\Omega_{G,D},\,\d_{X_M})$,
where $\omega\in\Omega^{\pm}_{G,D}$ if it satisfies $i^*\omega=0$
(the $D$ is for Dirichlet boundary condition). One also defines
$\Omega^{\pm}_{G,N}(M) = \left\{\alpha\in\Omega^{\pm}_{G}(M)\mid
i^*(\star\alpha)=0\right\}$ (Neumann boundary condition). Clearly,
the Hodge star provides an isomorphism
$$\star:\Omega_{G,D}^{\pm}\stackrel{\sim}{\longrightarrow}\Omega_{G,N}^{n-\pm}$$
where we write $n-\pm$ for the parity (modulo 2) resulting from
subtracting an even/odd number from $n$. Furthermore, because
$\d_{X_M}$ and $i^*$ commute, it follows that $\d_{X_M}$ preserves
Dirichlet boundary conditions while $\delta_{X_M}$ preserves Neumann
boundary conditions. Because of boundary terms, the null space of
$\Delta_{X_M}$ no longer coincides with the closed and co-closed
forms in Witten sense. Elements of $\ker\Delta_{X_M}$ are called
\emph{$X_M$-harmonic forms} while $\omega$ which satisfy
$\d_{X_M}\omega=\delta_{X_M}\omega=0$ are \emph{$X_M$-harmonic
fields} (following \cite{Our paper}); it is clear that every
$X_M$-harmonic field is an $X_M$-harmonic form, but the converse is
false. The space of $X_M$-harmonic \emph{fields} is denoted
$\mathcal{H}^\pm_{X_M}(M)$ (so $\mathcal{H}^*_{X_M}(M)\subset
\ker\Delta_{X_M}$). In fact, the space $\mathcal{H}^\pm_{X_M}(M)$ is
infinite dimensional and so is much too big to represent the
$X_M$-cohomology, hence, we restrict $\mathcal{H}^\pm_{X_M}(M)$ into
each of two finite dimensional subspaces, namely
$\mathcal{H}^{\pm}_{X_{M},D}(M)$ and
$\mathcal{H}^{\pm}_{X_{M},N}(M)$ with the obvious meanings
(Dirichlet and Neumann $X_M$-harmonic fields, respectively). There
are therefore two different candidates for $X_M$-harmonic
representatives when the boundary is present. This construction
firstly leads us to present the $X_M$-Hodge-Morrey decomposition
theorem which states that
\begin{equation}\label{X_M.H.M}
   \Omega^{\pm}_{G}(M)=\mathcal{E}^{\pm}_{X_M}(M)\oplus
\mathcal{C}^{\pm}_{X_M}(M)\oplus \mathcal{H}^{\pm}_{X_{M}}(M)
\end{equation}
where $\mathcal{E}^{\pm}_{X_M}(M)=\{\d_{X_{M}} \alpha \mid  \alpha
\in\Omega^{\mp}_{G,D}\}$ and $\mathcal{C}^{\pm}_{X_M}(M) =
\{\delta_{X_{M}} \beta \mid  \beta \in \Omega^{\mp}_{G,N}\}.$ This
decomposition is orthogonal with respect to the $L^2$-inner product
given above.

In addition, in \cite{Our paper} we present $X_M$-Friedrichs
Decomposition Theorem which states that
\begin{eqnarray}
  \mathcal{H}^{\pm}_{X_{M}}(M) &=& \mathcal{H}^{\pm}_{X_{M},D}(M)\oplus
\mathcal{H}^{\pm}_{X_{M},\mathrm{co}}(M)\label{F1} \\
  \mathcal{H}^{\pm}_{X_{M}}(M)&=&
  \mathcal{H}^{\pm}_{X_{M},N}(M)\oplus\mathcal{H}^{\pm}_{X_{M},\mathrm{ex}}(M)\label{F2}
\end{eqnarray}
where $\mathcal{H}^{\pm}_{X_{M},\mathrm{ex}}(M)=\{ \xi \in
\mathcal{H}^{\pm}_{X_{M}}(M)\mid \xi=\d_{X_{M}}\sigma\}$  and
$\mathcal{H}^{\pm}_{X_{M},\mathrm{co}}(M)=\{ \eta \in
\mathcal{H}^{\pm}_{X_{M}}(M)\mid \eta=\delta_{X_{M}}\alpha\}$. These
give the orthogonal \emph{$X_M$-Hodge-Morrey-Friedrichs} \cite{Our
paper} decompositions,

\begin{eqnarray}\label{X_M-H.M.F}
  \Omega^{\pm}_{G}(M) &=&\mathcal{E}^{\pm}_{X_M}(M)\oplus \mathcal{C}^{\pm}_{X_M}(M)\oplus
\mathcal{H}^{\pm}_{X_{M},D}(M)\oplus \mathcal{H}^{\pm}_{X_{M},\mathrm{co}}(M)\nonumber\\
  &=&\mathcal{E}^{\pm}_{X_M}(M)\oplus \mathcal{C}^{\pm}_{X_M}(M)\oplus
\mathcal{H}^{\pm}_{X_{M},N}(M)\oplus
\mathcal{H}^{\pm}_{X_{M},\mathrm{ex}}(M)
\end{eqnarray}
The two decompositions are related by the Hodge star operator. The
orthogonality of (\ref{X_M.H.M}-\ref{X_M-H.M.F}) follows from
Green's formula for $\d_{X_M}$ and $\delta_{X_M}$ of \cite{Our
paper} which states
\begin{equation}\label{eq.Green's formula}
    \langle \d_{X_{M}}\alpha,\beta\rangle=\langle
\alpha,\delta_{X_{M}}\beta\rangle+\int_{\partial {M}} i^{*} (\alpha
\wedge \star\beta)
\end{equation}
for all $\alpha ,\beta \in\Omega_{G}$.

The consequence for $X_M$-cohomology is that each class in
$H^{\pm}_{X_M}(M)$ is represented by a unique $X_M$-harmonic field
in $\mathcal{H}^\pm_{X_M,N}(M)$, and each relative class in
$H^{\pm}_{X_M}(M,\partial M)$ is represented by a unique
$X_M$-harmonic field in $\mathcal{H}^\pm_{X_M,D}(M)$. We also
elucidate the connection between the $X_M$-cohomology groups and the
relative and absolute equivariant cohomology groups.

 Our construction of the \emph{$X_M$-Hodge-Morrey-Friedrichs} decompositions
(\ref{X_M-H.M.F}) of smooth invariant differential forms gives us
insight to create boundary data which is a generalization of
Belishev and Sharafutdinov's boundary data on
$\Omega^{\pm}_G(\partial M).$

In this paper, we take a more topological approach, looking to
determine the $X_M$-cohomology groups and the free part of the
equivariant cohomology groups from the generalized boundary data. To
this end, we need first in section 2 to prove that our concrete
realizations $\mathcal{H}^{\pm}_{X_{M},N}(M) $ and $
\mathcal{H}^{\pm}_{X_{M},D}(M)$ of the absolute and relative
$X_M$-cohomology groups respectively meet only at the origin while
in section 3 we define the $X_M$-DN operator $\Lambda_{X_M}$ on $
\Omega^{\pm}_{G}(\partial M)$, the definition involves showing that
certain boundary value problems are solvable. Our definition of
$\Lambda_{X_M}$ represents a generalization of Belishev and
Sharafutdinov's DN-operator $\Lambda$ on $ \Omega^{\pm}_{G}(\partial
M)$ in the sense that when $X_M=0$, we would get
$\Lambda_{0}=\Lambda.$ Finally, in the remaining sections, we
explain to what extent the equivariant topology of the manifold in
question is determined by the $X_M$-DN map $\Lambda_{X_M}$.


\section{Main results}

 We consider a compact, connected, oriented, smooth Riemannian manifold $M$ with boundary and we
suppose $G$ is a torus acting by isometries on $M$. Given $X$ in the
Lie algebra and corresponding vector field $X_M$ on $M$, one defines
Witten's inhomogeneous coboundary operator $\d_{X_M} =
\d+\iota_{X_M}: \Omega_G^\pm \to\Omega_G^\mp$ and the resulting
$X_M$-harmonic fields and forms as described in the introduction.

 We introduce the following definitions of the
$X_M$-trace spaces
 $$i^{*}\mathcal{H}^{\pm}_{X_{M}}(M)= \{i^* \lambda \mid \lambda \in \mathcal{H}^{\pm}_{X_{M}}(M) \},\quad i^{*}\mathcal{H}^{\pm}_{X_{M},N}(M) = \{i^* \lambda_N\mid \lambda_N \in \mathcal{H}^{\pm}_{X_{M},N}(M)\}.$$
we call $i^{*}\mathcal{H}^{\pm}_{X_{M},N}(M)$ the Neumann
$X_M$-trace space.
\begin{remark}\label{remark1}
Along the boundary of $M$, any smooth differential form $\omega$ has
a natural decomposition into tangential ( $t \omega$ ) and normal(
$n \omega$ ) components. i.e.$$ \omega\mid_{\partial M}= t \omega+n
\omega$$ and the tangential component $t \omega$ is uniquely
determined by the pull-back $i^* \omega$ and it has been denoted in
a slight abuse of notation by $i^*\omega= i^* t \omega=t \omega $.
The normal and tangential components of $\omega$ are Hodge adjoint
to each other \cite{Schwarz}, i.e. $$ \star(n \omega)=t (\star
\omega)=i^* \star \omega.$$
\end{remark}

In order to prove Theorem \ref{Thm.interseactions}, we will use the
strong unique continuation theorem, due to Aronszajn
\cite{Aronszajn1}, Aronszajn, Krzywicki and Szarski
\cite{Aronszajn}. In \cite{J. Kazdan}, Kazdan writes this theorem in
terms of Laplacian operator $\Delta$ but he mentions that it is
still valid for any operator having the diagonal form $P=\Delta I +$
lower-order terms, where $I$ is the identity matrix. Hence, one can
state this theorem in terms of diagonal form operator by the
following form:
\begin{theorem}[Strong Unique Continuation Theorem \cite{J. Kazdan}]\label{strong unique}
Let $\overline{M}$ be a Riemannian manifold with Lipschitz
continuous metric, and let $\omega$ be a differential form having
first derivatives in $L^2$ that satisfies $P( \omega)=0$ where $P$
is a diagonal form operator. If $ \omega$
 has a zero of infinite order at some point in $\overline{M}$, then $
\omega$ is identically zero on $\overline{M}$.
\end{theorem}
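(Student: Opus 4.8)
The statement is the strong unique continuation property (SUCP), and I would prove it by the method of Carleman estimates, using crucially that the principal part of $P$ is the scalar Laplacian times the identity. The plan is: reduce to a local statement near the point of infinite-order vanishing, exploit the diagonal structure to pass from a coupled system to a vector-valued differential \emph{inequality}, prove a weighted energy estimate that gains powers of a large parameter $\tau$, and finally propagate the vanishing across $\overline M$ by a connectedness argument. Working in geodesic normal coordinates about the point $p$ at which $\omega$ vanishes to infinite order, and writing $\omega=(\omega_1,\dots,\omega_N)$ in an orthonormal frame, the equation $P\omega=0$ becomes a system whose coupling sits entirely in the first- and zeroth-order terms, since $P=\Delta I+\text{(lower order)}$. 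Hence each component obeys the pointwise bound
$$|\Delta\omega_i|\le C\Big(\sum_j|\omega_j|+\sum_j|\nabla\omega_j|\Big),$$
with $C$ controlled by the Lipschitz bound on the metric and the coefficients of $P$. It therefore suffices to prove SUCP for $H^2$ solutions of such a differential inequality.

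The heart of the matter is a Carleman inequality. I would take a radial weight $\phi(x)=\phi(|x|)$ that is strictly convex as a function of $\log|x|$ and blows up at the origin (for instance $\phi(r)=-\log r+\varepsilon(\log r)^2$, or a comparable choice adapted to the metric), and establish that for every $u\in H^2$ supported in a small punctured ball about $p$ and every sufficiently large $\tau$,
$$\tau^3\!\int \ee^{2\tau\phi}|u|^2+\tau\!\int \ee^{2\tau\phi}|\nabla u|^2\;\le\;C\!\int \ee^{2\tau\phi}|\Delta u|^2 .$$
The gain of powers of $\tau$ on the left is precisely what is needed to absorb the first- and zeroth-order coupling terms coming from the differential inequality once $\tau$ is large; the log-convexity of $\phi$ is the pseudoconvexity condition guaranteeing that the conjugated operator $\ee^{\tau\phi}\Delta\,\ee^{-\tau\phi}$ has a coercive symmetric part.

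I would then apply the estimate to $u=\chi\omega$, where $\chi$ is a cutoff equal to $1$ on a smaller ball $\{r<R_0\}$ and supported in $\{r<R_1\}$. The infinite-order vanishing of $\omega$ at $p$ ensures that the weighted integrals converge despite the singularity of $\phi$, while the commutator terms produced by $\chi$ are supported in the annulus $\{R_0<r<R_1\}$, where $\phi$ is bounded above by $\phi(R_0)$. Substituting the differential inequality into the right-hand side, absorbing the lower-order contributions by the $\tau$-gain, and using that $\phi$ is larger on $\{r<R_0\}$ than on the annulus, one obtains $\int_{r<R_0}|\omega|^2\le C\tau^{-3}\ee^{2\tau(\phi(R_1)-\phi(R_0))}(\cdots)$; letting $\tau\to\infty$ forces $\omega\equiv0$ on the smaller ball. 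Finally, the set of points at which $\omega$ vanishes to infinite order is closed by continuity of the derivatives and, by what has just been shown, open; since $\overline M$ is connected and this set is nonempty, $\omega$ vanishes identically.

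The main obstacle is proving the Carleman estimate itself: one must engineer the weight so that the conjugated operator is coercive, extract the correct powers of $\tau$, and do all of this under only Lipschitz regularity of the metric. The limited smoothness of the coefficients forces one to keep every commutator at an order that the bound $|\Delta\omega|\le C(|\omega|+|\nabla\omega|)$ can control, which is exactly why the diagonal structure of $P$ is indispensable—it confines all the dangerous terms to the lower-order part absorbed by the $\tau$-gain. An alternative to the Carleman route, following the original approach of Aronszajn and of Aronszajn--Krzywicki--Szarski, would be to establish logarithmic convexity of a suitable frequency functional in the annulus and derive doubling estimates that rule out infinite-order vanishing unless $\omega\equiv0$; this would sidestep the explicit weight at the cost of its own monotonicity computations.
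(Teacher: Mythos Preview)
The paper does not prove this theorem at all: it is merely quoted from the literature (Aronszajn; Aronszajn--Krzywicki--Szarski; Kazdan) and then \emph{applied} in the proof of Theorem~2.3. So there is no ``paper's own proof'' to compare against.

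That said, your sketch is a reasonable outline of the classical Carleman-estimate route to strong unique continuation for operators of the form $\Delta I+\text{lower order}$, and you correctly identify the two main ingredients: (i) the diagonal principal part reduces the coupled system to a vector-valued differential inequality $|\Delta\omega_i|\le C(|\omega|+|\nabla\omega|)$, and (ii) a weighted $L^2$ estimate with $\tau$-gain absorbs the lower-order terms. You also note the alternative frequency/log-convexity approach of Aronszajn and Aronszajn--Krzywicki--Szarski, which is in fact closer to what the cited references actually do. The one place where your sketch is thin is the Lipschitz-metric case: establishing the Carleman inequality when the coefficients of $\Delta$ are only Lipschitz (so second derivatives of the metric are merely $L^\infty$) requires genuine care with commutators and is the technical heart of \cite{Aronszajn}; you acknowledge this as ``the main obstacle'' but do not indicate how it is overcome. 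For the purposes of this paper, however, no proof was expected---the result is simply invoked as a black box.
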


 Now, we are ready to present our main results.

\begin{theorem}\label{Thm.interseactions}
Let $M$ be a compact, connected, oriented smooth Riemannian manifold
of dimension $n$ with boundary and with an action of a torus $G$
which acts by isometries on $M$. If an $X_M$-harmonic field $\lambda
\in \mathcal{H}^{\pm}_{X_{M}}(M)$ vanishes on the boundary $\partial
M$, then $\lambda\equiv0$, i.e.
\begin{equation}\label{intersect}
   \mathcal{H}^{\pm}_{X_{M},N}(M)\cap
\mathcal{H}^{\pm}_{X_{M},D}(M)=\{0\}
\end{equation}
\end{theorem}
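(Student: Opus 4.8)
The plan is to reduce the statement to the Strong Unique Continuation Theorem (Theorem~\ref{strong unique}). Suppose $\lambda\in\mathcal{H}^{\pm}_{X_M}(M)$ is an $X_M$-harmonic field vanishing on $\partial M$; that is, both $\d_{X_M}\lambda=0$ and $\delta_{X_M}\lambda=0$, and $\lambda\restrict{}_{\partial M}=0$, which in the notation of Remark~\ref{remark1} means $t\lambda=0$ and $n\lambda=0$. The goal is to show $\lambda$ has a zero of infinite order at some boundary point (indeed at every boundary point), so that the unique continuation theorem forces $\lambda\equiv0$ on $M$. The equivalence~(\ref{intersect}) with the intersection statement is then immediate: an element of $\mathcal{H}^{\pm}_{X_M,N}(M)\cap\mathcal{H}^{\pm}_{X_M,D}(M)$ satisfies $i^*\lambda=0$ and $i^*\star\lambda=0$, i.e. $t\lambda=0$ and (via $\star(n\lambda)=i^*\star\lambda$) $n\lambda=0$, so it vanishes on $\partial M$, and conversely.

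The first substantive step is to check that $\lambda$ satisfies a diagonal-form elliptic equation: since $\lambda$ is an $X_M$-harmonic field it lies in $\ker\Delta_{X_M}$, and one must verify that $\Delta_{X_M}=(\d_{X_M}+\delta_{X_M})^2$ has the shape $\Delta\,I+(\text{lower order})$ when written in local coordinates on invariant forms. This is a Weitzenböck-type computation: $\d_{X_M}=\d+\iota_{X_M}$, so $\Delta_{X_M}$ differs from the ordinary Hodge Laplacian $\Delta=\d\delta+\delta\d$ (which is already of diagonal form by the classical Weitzenböck formula) only by terms involving $\iota_{X_M}$, its adjoint, and their first-order brackets with $\d,\delta$ — all of order at most one. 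Hence Theorem~\ref{strong unique} applies to $\lambda$ on a suitable closed extension/collar $\overline{M}$.

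The second and main step is to show that the vanishing of $t\lambda$ and $n\lambda$ on $\partial M$, together with $\d_{X_M}\lambda=\delta_{X_M}\lambda=0$, forces all normal derivatives of $\lambda$ to vanish on $\partial M$, giving a zero of infinite order. I would work in boundary normal (collar) coordinates, writing $\lambda$ in terms of its tangential and normal parts and expressing $\d_{X_M}\lambda=0$ and $\delta_{X_M}\lambda=0$ as a first-order system coupling $\partial_\nu(t\lambda)$, $\partial_\nu(n\lambda)$ to tangential derivatives along $\partial M$ and to $\lambda$ itself (the $\iota_{X_M}$ terms are zeroth order here and, crucially, $X_M$ is tangent to $\partial M$ since $G$ acts by isometries preserving the boundary, so they do not interfere with the normal-derivative bookkeeping). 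Since $\lambda\restrict{}_{\partial M}=0$, the first-order equations give $\partial_\nu\lambda\restrict{}_{\partial M}=0$; then differentiating the system repeatedly in the normal direction and feeding back the already-established vanishing of lower-order normal jets yields $\partial_\nu^m\lambda\restrict{}_{\partial M}=0$ for all $m$ by induction. This inductive bootstrapping is where the care is needed — one must keep track of which derivatives of $\lambda$ appear and confirm that at each stage only previously-controlled jets enter — but it is essentially the same argument used in the $X_M=0$ case (cf.\ the treatment of harmonic fields in \cite{Schwarz}), now carried along the inhomogeneous operator $\d_{X_M}$. With the infinite-order zero established, Theorem~\ref{strong unique} gives $\lambda\equiv0$, completing the proof.
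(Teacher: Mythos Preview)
Your proposal is correct and follows essentially the same route as the paper: reduce to the Strong Unique Continuation Theorem by verifying that $\Delta_{X_M}=\Delta I+\text{lower order}$ and that $\lambda$ has a zero of infinite order at every boundary point. The only noteworthy differences are technical: the paper is explicit about the extension step (reflecting the metric, extending $\lambda$ oddly and $X_M$ evenly across $\partial M$ to obtain a Lipschitz setting on $\RR^n$), and for the infinite-order vanishing it argues by contradiction on the orders $k,j$ of the normal Taylor expansions of the tangential and normal parts (using $\d_{X_M}\lambda=0$ to force $k>j$ and $\delta_{X_M}\lambda=0$ to force $k<j$) rather than by your direct inductive bootstrap on normal jets---both arguments exploit the same first-order system and the tangency of $X_M$ to $\partial M$.
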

\begin{proof}
Suppose $\lambda \in\mathcal{H}^{\pm}_{X_{M},N}(M)\cap
\mathcal{H}^{\pm}_{X_{M},D}(M)$, then $\lambda$ is smooth by theorem
3.4(c) of \cite{Our paper}. Since $i^* \lambda=i^* \star\lambda=0$
then remark \ref{remark1} asserts that $t \lambda=n \lambda=0$.
Hence $ \lambda\mid_{\partial M}\equiv0$ and we get that
$(\iota_{X_M} \lambda)\mid_{\partial M}=0$ as well.

The proof is local so we can consider $M$ to be the upper half space
in $\mathbb{R}^{n}$ with $\partial M=\mathbb{R}^{n-1}.$ Since the
metric, the differential form $\lambda$ and the vector field $X_M$
are given in the upper half space, we can extend them from there to
all of $\mathbb{R}^{n}$ by reflection in $\partial
M=\mathbb{R}^{n-1}.$ The resulting objects are: the extended metric,
which will be Lipschitz continuous \cite{DeTurck}; we extend
$\lambda$ to all of $\mathbb{R}^{n}$ by making it odd with respect
to reflection in $\mathbb{R}^{n-1}$ and extend $X_M$ to all of
$\mathbb{R}^{n}$ by making it even with respect to reflection in
$\mathbb{R}^{n-1}$ and extended $X_M$ will be a Lipschitz continuous
vector field. But the original $\lambda$ satisfies $
\lambda\mid_{\partial M}\equiv0$ and
$\d_{X_M}\lambda=\delta_{X_M}\lambda=0$ on $\mathbb{R}^{n-1}$, hence
the extended one will be of class $C^1$ and satisfy
$\d_{X_M}\lambda=\delta_{X_M}\lambda=0$ on $\mathbb{R}^{n}$, i.e.
the extended $\lambda$ satisfies $P(\lambda)=\Delta_{X_M}\lambda=0$
on all of $\mathbb{R}^{n}$ where the operator $\Delta_{X_M}$ has
diagonal form, i.e. $P=\Delta_{X_M}= \Delta I +$ lower-order terms,
and $I$ is the identity matrix. So far, we satisfy the first
condition of theorem \ref{strong unique}.

Now, we need to satisfy the remaining hypotheses of theorem
\ref{strong unique}. Let $x=(x',x_n)=(x_1,x_2,...,x_{n-1},x_n)$ be a
coordinates chart where $ x'=(x_1,x_2,...,x_{n-1})$  is a chart on
the boundary $
\partial M$ and $x_n$ is the distance to the boundary. In these
coordinates $x_n>0$ in $M$ and $\partial M$ is locally characterized
by $x_n=0$. These coordinates are called boundary normal coordinates
and the Riemannian metric in these coordinates has the form
$\sum_{m,r=1}^{n-1} h_{m,r}(x) dx^{m} \otimes dx^{r} + dx^{n}
\otimes dx^{n}.$

Now, we consider a neighborhood of $p \in
\partial M$ where our boundary normal coordinates are well defined.
We can write $\lambda=\alpha +\beta \wedge dx_n$ where
$\alpha=\Sigma f_{I}(x) dx^{I}$, $\beta=\Sigma g_{I}(x) dx^{I}$ and
$I\subset\{1,2,...,n-1\}$. Our goal is to prove that all the partial
derivatives of the coefficients of $\lambda$ (i.e. $f_{I}(x)$ and $
g_{I}(x)$) vanish at $p \in \partial M.$ Now, $
\lambda\mid_{\partial M}\equiv0$ which implies that
$f_{I}(x',0)=g_{I}(x',0)=0$. Hence, we can apply Hadamard's lemma to
$f_{I}(x)$ and $ g_{I}(x)$ and deduce that $f_{I}(x)=x_n
\overline{f_{I}}(x)$ and $ g_{I}(x)=x_n \overline{g_{I}}(x)$ for
some smooth functions $\overline{f_{I}}(x)$ and
$\overline{g_{I}}(x).$ Moreover, these representations for
$f_{I}(x)$ and $ g_{I}(x) $ help us to conclude that all the higher
partial derivatives of $f_{I}(x)$ and $ g_{I}(x) $ with respect to
the coordinates of $x'$ (i.e. except the normal direction coordinate
$x_n$) at the point $p$ are all zero. i.e.
$$\frac{\partial ^{\mid s \mid} f_{I}(x',0)}{\partial x_{1}^{s_1}... \partial x_{n-1}^{s_{n-1}}} = \frac{\partial ^{\mid s \mid} g_{I}(x',0)}{\partial x_{1}^{s_1}... \partial x_{n-1}^{s_{n-1}}}=0, \quad \forall s_1,s_2,...,s_{n-1}=0,1,2,...$$

Therefore, we only need to prove that all the higher partial
derivatives of $f_{I}(x)$ and $ g_{I}(x) $ in the normal direction
are zero to deduce that the Taylor series of $f_{I}(x)$ and
$g_{I}(x) $ around $ x_n=0 $ are zero.

 For contradiction, suppose the Taylor series of $f_{I}(x)$ and
$g_{I}(x) $ around $ x_n=0 $ are not zero at $p \in
\partial M $ which means that there exist the largest positive
integer numbers $k $ and $j $ such that $f_{I}(x)=x^{k}_n
\widehat{f_{I}}(x)$ and $g_{I}(x)=x^{j}_n \widehat{g_{J}}(x)$ where
$\widehat{f_{I}}(x',0)\neq 0$ and $\widehat{g_{J}}(x',0)\neq 0$ for
some $I, J$. Thus, we can always write $\lambda$ in the following
form $\lambda= x^{k}_n \tau + x^{j}_n \rho \wedge dx_n $ where the
differential forms $ \tau$ and $ \rho$ do not contain $dx_n$.
Applying $d_{X_M} \lambda=0$, we get $$ 0= d_{X_M} \lambda=k
x^{k-1}_{n} dx_n \wedge \tau   +x^{k}_{n} d\tau +x^{j}_{n} d\rho
\wedge dx_n+x^{k}_{n} \iota_{X_M} \tau+x^{j}_{n} \iota_{X_M} (\rho
\wedge dx_n ).$$

Now, reducing this equation modulo $x^{k}_n$ we conclude that the
term $ x^{j}_{n} (d\rho \wedge dx_n + \iota_{X_M} (\rho \wedge dx_n
) ) \not\equiv 0$ modulo $x^{k}_n $ because the term $k x^{k-1}_{n}
dx_n \wedge \tau \not\equiv 0 $ modulo $x^{k}_n$ and as a
consequence, we infer that $k>j$.

Similarly, we can calculate $\delta_{X_M}\lambda=-(\mp)^{n}(\star d
\star\lambda+\star \iota_{X_M} \star \lambda)=0$ ( using the
Riemannian metric above). For simplicity, it is enough to calculate
$d \star\lambda+\iota_{X_M} \star \lambda=0 $ where $\star \lambda=
x^{k}_n \xi \wedge dx_n + x^{j}_n \zeta  $ such that the
differential forms $ \xi$ and $ \zeta$ do not contain $dx_n$ and
both of them should contain many of the coefficients $h_{m,r}(x)$.
Hence, we get
$$0=d \star\lambda+\iota_{X_M} \star \lambda= x^{k}_n d\xi \wedge dx_n + j x^{j-1}_n dx_n \wedge \zeta+ x^{j}_n d\zeta + x^{k}_{n}
\iota_{X_M} (\xi \wedge dx_n )+x^{j}_{n} \iota_{X_M} \zeta.$$

Reducing this equation modulo $ x^{j}_n$ and for the same reason
above but replacing $k$ by $j$, then we can infer that $k < j $, but
this is a contradiction, then there are not such largest positive
integer numbers $k$ and $j$. Hence, the Taylor series for the
coefficients $f_{I}(x)$ and $g_{I}(x) $ around $ x_n=0 $ must be
zero at $p \in
\partial M$ , i.e. $$\frac{\partial^{r}f_{I}(x',0)}{\partial
x^{r}_{n}}= \frac{\partial^{r}g_{I}(x',0)}{\partial x^{r}_{n}}=0,
\quad \forall r=0,1,2,\dots$$ It means that all the higher partial
derivatives of $f_{I}(x)$ and $ g_{I}(x) $ we have already
considered vanish at all points of the boundary $\partial M.$ Thus,
this facts are enough to show the mixed partial derivatives
including $x_n$ also vanish at the boundary. Hence, $ \lambda$ has a
zero of infinite order at $p \in\partial M$.

The remaining possibility of one of the the coefficients $f_I$ and
$g_I$ having finite order and the other infinite order in $x_n$
follows from the same argument as above.

Thus, $ \lambda $ satisfies all the hypotheses of the strong Unique
Continuation Theorem \ref{strong unique} then $\lambda$ must be zero
on all of $\mathbb{R}^{n}$. Since $M$ is assumed to be connected,
$\lambda$ must be identically zero on all of $M$, i.e.
$\lambda\equiv0.$
\end{proof}

As a consequence of Theorem \ref{Thm.interseactions}, we obtain the
following results.
\begin{corollary}\label{coro.decomposition}
\begin{equation}\label{eq.decomposition}
   \mathcal{H}^{\pm}_{X_{M}}(M)=
\mathcal{H}^{\pm}_{X_{M},\mathrm{ex}}(M) +
\mathcal{H}^{\pm}_{X_{M},\mathrm{co}}(M)
\end{equation}
where $``+``$ is not direct sum.
\end{corollary}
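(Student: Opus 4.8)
The plan is to derive the decomposition (\ref{eq.decomposition}) from the two $X_M$-Friedrichs splittings (\ref{F1})--(\ref{F2}) together with the triviality statement of Theorem \ref{Thm.interseactions}. Throughout I would write $\mathcal{H}=\mathcal{H}^{\pm}_{X_{M}}(M)$ and keep the notation $\mathcal{H}^{\pm}_{X_{M},D}(M)$, $\mathcal{H}^{\pm}_{X_{M},N}(M)$, $\mathcal{H}^{\pm}_{X_{M},\mathrm{ex}}(M)$, $\mathcal{H}^{\pm}_{X_{M},\mathrm{co}}(M)$ for the four subspaces appearing in (\ref{F1})--(\ref{F2}); recall that $\mathcal{H}^{\pm}_{X_{M},D}(M)$ and $\mathcal{H}^{\pm}_{X_{M},N}(M)$ are finite-dimensional (they carry the relative and absolute $X_M$-cohomology) and that the splittings (\ref{F1})--(\ref{F2}) are $L^2$-orthogonal. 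Since $\mathcal{H}^{\pm}_{X_{M},D}(M)$ is finite-dimensional, the $L^2$-orthogonal projection $P:\mathcal{H}\to\mathcal{H}^{\pm}_{X_{M},D}(M)$ is a well-defined, self-adjoint operator, and by (\ref{F1}) its kernel is exactly $\mathcal{H}^{\pm}_{X_{M},\mathrm{co}}(M)$.

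The first — and really the only substantial — step will be to show that $P$ maps $\mathcal{H}^{\pm}_{X_{M},\mathrm{ex}}(M)$ onto $\mathcal{H}^{\pm}_{X_{M},D}(M)$. As the target is finite-dimensional, it suffices to check that no nonzero $w\in\mathcal{H}^{\pm}_{X_{M},D}(M)$ is $L^2$-orthogonal to $P\bigl(\mathcal{H}^{\pm}_{X_{M},\mathrm{ex}}(M)\bigr)$. For such a $w$ one has $Pw=w$, hence $\langle w,Pe\rangle=\langle Pw,e\rangle=\langle w,e\rangle$ for every $e\in\mathcal{H}^{\pm}_{X_{M},\mathrm{ex}}(M)$; so $w$ would be $L^2$-orthogonal to $\mathcal{H}^{\pm}_{X_{M},\mathrm{ex}}(M)$, i.e. $w\in\mathcal{H}^{\pm}_{X_{M},N}(M)$ by (\ref{F2}). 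But then $w\in\mathcal{H}^{\pm}_{X_{M},N}(M)\cap\mathcal{H}^{\pm}_{X_{M},D}(M)=\{0\}$ by Theorem \ref{Thm.interseactions}, which yields the surjectivity. I expect this to be where all the weight of the argument sits; everything else is bookkeeping with the orthogonal splittings.

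Granting this step, (\ref{eq.decomposition}) follows at once: given $\lambda\in\mathcal{H}$, use (\ref{F1}) to write $\lambda=\lambda_{D}+\lambda_{co}$ with $\lambda_{D}\in\mathcal{H}^{\pm}_{X_{M},D}(M)$ and $\lambda_{co}\in\mathcal{H}^{\pm}_{X_{M},\mathrm{co}}(M)$, choose $e\in\mathcal{H}^{\pm}_{X_{M},\mathrm{ex}}(M)$ with $Pe=\lambda_{D}$, and observe that $e-\lambda_{D}\in\ker P=\mathcal{H}^{\pm}_{X_{M},\mathrm{co}}(M)$; then $\lambda=e-(e-\lambda_{D})+\lambda_{co}$ exhibits $\lambda$ as an element of $\mathcal{H}^{\pm}_{X_{M},\mathrm{ex}}(M)+\mathcal{H}^{\pm}_{X_{M},\mathrm{co}}(M)$. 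To see that this sum is not direct I would argue by contradiction: if it were direct then $\mathcal{H}=\mathcal{H}^{\pm}_{X_{M},\mathrm{ex}}(M)\oplus\mathcal{H}^{\pm}_{X_{M},\mathrm{co}}(M)$, so $\mathcal{H}^{\pm}_{X_{M},\mathrm{co}}(M)\cong\mathcal{H}/\mathcal{H}^{\pm}_{X_{M},\mathrm{ex}}(M)\cong\mathcal{H}^{\pm}_{X_{M},N}(M)$ would be finite-dimensional; but (\ref{F1}) gives $\mathcal{H}^{\pm}_{X_{M},\mathrm{co}}(M)\cong\mathcal{H}/\mathcal{H}^{\pm}_{X_{M},D}(M)$, which is infinite-dimensional because $\mathcal{H}=\mathcal{H}^{\pm}_{X_{M}}(M)$ is infinite-dimensional and $\mathcal{H}^{\pm}_{X_{M},D}(M)$ is finite-dimensional — a contradiction.

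An equivalent, perhaps more transparent, way to package this: $\mathcal{H}^{\pm}_{X_{M},\mathrm{ex}}(M)+\mathcal{H}^{\pm}_{X_{M},\mathrm{co}}(M)$ contains $\mathcal{H}^{\pm}_{X_{M},\mathrm{ex}}(M)$, which by (\ref{F2}) has finite codimension $\dim\mathcal{H}^{\pm}_{X_{M},N}(M)$ in $\mathcal{H}$, and its $L^2$-orthogonal complement in $\mathcal{H}$ is $\mathcal{H}^{\pm}_{X_{M},\mathrm{ex}}(M)^{\perp}\cap\mathcal{H}^{\pm}_{X_{M},\mathrm{co}}(M)^{\perp}=\mathcal{H}^{\pm}_{X_{M},N}(M)\cap\mathcal{H}^{\pm}_{X_{M},D}(M)=\{0\}$ by Theorem \ref{Thm.interseactions}; a finite-codimension subspace with zero orthogonal complement is everything. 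Either way, Theorem \ref{Thm.interseactions} is the whole content of the corollary.
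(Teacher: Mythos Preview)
Your proposal is correct, and your ``equivalent, more transparent'' formulation at the end is exactly the paper's argument: from (\ref{F1})--(\ref{F2}) one reads off that the orthogonal complements of $\mathcal{H}^{\pm}_{X_{M},\mathrm{co}}(M)$ and $\mathcal{H}^{\pm}_{X_{M},\mathrm{ex}}(M)$ inside $\mathcal{H}^{\pm}_{X_{M}}(M)$ are $\mathcal{H}^{\pm}_{X_{M},D}(M)$ and $\mathcal{H}^{\pm}_{X_{M},N}(M)$, whose intersection vanishes by Theorem~\ref{Thm.interseactions}, and this forces the sum to be everything. Your primary route via the projection $P$ onto $\mathcal{H}^{\pm}_{X_{M},D}(M)$ is just a more explicit unpacking of the same orthogonality step (and has the virtue of making clear why no completeness hypothesis is needed, since everything reduces to a surjectivity statement onto a finite-dimensional target); the paper leaves that point implicit. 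Your justification of ``not direct'' using the infinite-dimensionality of $\mathcal{H}^{\pm}_{X_{M}}(M)$ is also fine and more than the paper supplies --- there the non-directness is simply asserted.
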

\begin{proof}
The $X_M$-Friedrichs Decomposition Theorem (\ref{F1} and \ref{F2})
shows that $(\mathcal{H}^{\pm}_{X_{M},D}(M) )^{\perp}\cap
\mathcal{H}^{\pm}_{X_{M}}(M)=
\mathcal{H}^{\pm}_{X_{M},\mathrm{co}}(M) $ and $
(\mathcal{H}^{\pm}_{X_{M},N}(M) )^{\perp}\cap
\mathcal{H}^{\pm}_{X_{M}}(M)=
\mathcal{H}^{\pm}_{X_{M},\mathrm{ex}}(M) $. Hence, using these facts
together with Theorem \ref{Thm.interseactions}, we conclude
eq.(\ref{eq.decomposition}.)
\end{proof}

\begin{corollary}\label{coro.2.5}
The trace map $i^*:\mathcal{H}^{\pm}_{X_{M},N}(M)\longrightarrow
i^{*}\mathcal{H}^{\pm}_{X_{M},N}(M)$ defines an isomorphism.
\end{corollary}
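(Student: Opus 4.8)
The plan is to observe that this is an essentially immediate consequence of Theorem~\ref{Thm.interseactions}, so the proof should be very short. Surjectivity of $i^*$ onto $i^{*}\mathcal{H}^{\pm}_{X_{M},N}(M)$ is automatic, since that target space is \emph{defined} as the image of $\mathcal{H}^{\pm}_{X_{M},N}(M)$ under $i^*$. So the only thing to establish is injectivity.

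For injectivity I would argue as follows. Suppose $\lambda\in\mathcal{H}^{\pm}_{X_{M},N}(M)$ lies in the kernel of $i^*$, i.e. $i^*\lambda=0$. By hypothesis $\lambda$ is already an $X_M$-harmonic field satisfying the Neumann boundary condition $i^*(\star\lambda)=0$; the extra condition $i^*\lambda=0$ is precisely the Dirichlet boundary condition, so $\lambda$ is also a Dirichlet $X_M$-harmonic field. Hence
\[
\lambda\in\mathcal{H}^{\pm}_{X_{M},N}(M)\cap\mathcal{H}^{\pm}_{X_{M},D}(M),
\]
and Theorem~\ref{Thm.interseactions} forces $\lambda\equiv0$. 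Therefore $i^*$ is injective, and combined with the tautological surjectivity it is an isomorphism onto $i^{*}\mathcal{H}^{\pm}_{X_{M},N}(M)$.

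There is no real obstacle here: all the analytic content (the strong unique continuation argument needed to control a field vanishing on the boundary) has already been absorbed into Theorem~\ref{Thm.interseactions}. The only point worth stating carefully is the bookkeeping that ``$i^*\lambda=0$ together with $\lambda\in\mathcal{H}^{\pm}_{X_{M},N}(M)$'' genuinely places $\lambda$ in the intersection $\mathcal{H}^{\pm}_{X_{M},N}(M)\cap\mathcal{H}^{\pm}_{X_{M},D}(M)$ — i.e. that membership in $\mathcal{H}^{\pm}_{X_{M},D}(M)$ requires only the $X_M$-harmonic field property plus the Dirichlet trace condition, both of which are in hand. Once that is noted, the corollary follows at once. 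One could also remark that the analogous statement for $\mathcal{H}^{\pm}_{X_{M},D}(M)$ with the twisted trace $i^*\!\star$ follows by applying the Hodge star isomorphism $\star:\Omega_{G,D}^{\pm}\to\Omega_{G,N}^{n-\pm}$ and the same reasoning.
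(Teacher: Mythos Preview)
Your proof is correct and follows essentially the same approach as the paper: surjectivity is noted to be tautological, and injectivity is obtained by observing that $i^*\lambda=0$ with $\lambda\in\mathcal{H}^{\pm}_{X_{M},N}(M)$ places $\lambda$ in $\mathcal{H}^{\pm}_{X_{M},N}(M)\cap\mathcal{H}^{\pm}_{X_{M},D}(M)$, which is $\{0\}$ by Theorem~\ref{Thm.interseactions}. The paper's version is simply a terser statement of the same argument.
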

\begin{proof}
It is clear that $i^*$ is surjective and we can use theorem
\ref{Thm.interseactions} to prove the kernel of the linear map $i^*$
is zero (i.e. $\ker i^*=\{0\}$) which implies that $i^*$ is
injective. Thus, $i^*$ is bijection.
\end{proof}

\begin{corollary}\label{coro.2.6}
\begin{itemize}
\item[1-] The map $f:i^{*}\mathcal{H}^{\pm}_{X_{M},N}(M)\longrightarrow
H^{\pm}_{X_M}(M)$ defined by $f(i^* \lambda_N)=[\lambda_N]$ for
$\lambda_N \in \mathcal{H}^{\pm}_{X_{M},N}(M)$ is an isomorphism.
\item[2-] The map $h:i^{*}\mathcal{H}^{n-\pm}_{X_{M},N}(M)\longrightarrow H^{\pm}_{X_M}(M,\,\partial
M)$ defined by $h(i^* \lambda_N)=[\star \lambda_N]$ for $\lambda_N
\in \mathcal{H}^{n-\pm}_{X_{M},N}(M)$ is an isomorphism.
\end{itemize}
\end{corollary}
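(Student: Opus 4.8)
The plan is to deduce both statements from the machinery already in place: the $X_M$-Hodge–Morrey–Friedrichs decomposition (\ref{X_M-H.M.F}), the Friedrichs decompositions (\ref{F1}–\ref{F2}), Corollary \ref{coro.2.5}, and the identification (stated in the introduction, from \cite{Our paper}) that each class in $H^{\pm}_{X_M}(M)$ has a unique representative in $\mathcal{H}^{\pm}_{X_M,N}(M)$ and each class in $H^{\pm}_{X_M}(M,\partial M)$ has a unique representative in $\mathcal{H}^{\pm}_{X_M,D}(M)$.

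For part 1, first I would note that $f$ is well-defined and is exactly the composite of the inverse of the isomorphism $i^*:\mathcal{H}^{\pm}_{X_M,N}(M)\to i^*\mathcal{H}^{\pm}_{X_M,N}(M)$ from Corollary \ref{coro.2.5} with the map $\mathcal{H}^{\pm}_{X_M,N}(M)\to H^{\pm}_{X_M}(M)$, $\lambda_N\mapsto[\lambda_N]$. So it suffices to show this latter map is an isomorphism. Surjectivity: given a class in $H^{\pm}_{X_M}(M)$, pick any representative $\omega$ and apply (\ref{X_M-H.M.F}) in its second form; the components in $\mathcal{E}^{\pm}_{X_M}(M)$ and $\mathcal{H}^{\pm}_{X_M,\mathrm{ex}}(M)$ are $X_M$-exact (hence do not change the class), while the $\mathcal{C}^{\pm}_{X_M}(M)$-component is not closed unless it vanishes — since $\omega$ is closed and the other three summands are closed, orthogonality forces the $\mathcal{C}$-component to be zero. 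Thus $\omega$ is cohomologous to its $\mathcal{H}^{\pm}_{X_M,N}(M)$-component. Injectivity: if $\lambda_N\in\mathcal{H}^{\pm}_{X_M,N}(M)$ is $X_M$-exact, say $\lambda_N=\d_{X_M}\alpha$, then $\lambda_N\in\mathcal{H}^{\pm}_{X_M,\mathrm{ex}}(M)$, which by (\ref{F2}) is the orthogonal complement of $\mathcal{H}^{\pm}_{X_M,N}(M)$ inside $\mathcal{H}^{\pm}_{X_M}(M)$; hence $\lambda_N$ is orthogonal to itself and $\lambda_N=0$. Combining with Corollary \ref{coro.2.5}, $f$ is an isomorphism.

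For part 2, I would reduce to part 1 via the Hodge star. Recall $\star$ restricts to an isomorphism $\mathcal{H}^{n-\pm}_{X_M,N}(M)\to\mathcal{H}^{\pm}_{X_M,D}(M)$ (this is the $X_M$-analogue of $\star:\Omega^{\pm}_{G,N}\to\Omega^{n-\pm}_{G,D}$ together with the fact that $\star$ intertwines $\d_{X_M}$-closedness and -coclosedness up to sign, so it sends $X_M$-harmonic fields with Neumann condition to $X_M$-harmonic fields with Dirichlet condition). The map $h$ is then the composite of $i^*{}^{-1}:i^*\mathcal{H}^{n-\pm}_{X_M,N}(M)\to\mathcal{H}^{n-\pm}_{X_M,N}(M)$ (Corollary \ref{coro.2.5}), then $\star:\mathcal{H}^{n-\pm}_{X_M,N}(M)\to\mathcal{H}^{\pm}_{X_M,D}(M)$, then $\lambda_D\mapsto[\lambda_D]\in H^{\pm}_{X_M}(M,\partial M)$. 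So it remains to show the last map is an isomorphism, and this is the exact mirror of the argument in part 1: surjectivity from applying the first form of (\ref{X_M-H.M.F}) to a relative representative and killing the $\mathcal{E}^{\pm}_{X_M}(M)$- and $\mathcal{H}^{\pm}_{X_M,\mathrm{co}}(M)$-components (the former $X_M$-exact in the subcomplex $\Omega_{G,D}$, the latter forced to vanish by orthogonality against coclosed forms), and injectivity from (\ref{F1}), since $\mathcal{H}^{\pm}_{X_M,\mathrm{co}}(M)$ is the orthogonal complement of $\mathcal{H}^{\pm}_{X_M,D}(M)$ in $\mathcal{H}^{\pm}_{X_M}(M)$.

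I expect the only genuinely delicate point to be making precise that $\lambda_D\mapsto[\lambda_D]$ lands in and surjects onto the \emph{relative} cohomology $H^{\pm}_{X_M}(M,\partial M)$ — i.e. checking that the decomposition argument can be carried out within the Dirichlet subcomplex $(\Omega_{G,D},\d_{X_M})$ and that the exact terms one discards are themselves Dirichlet. This is handled by the remark in the introduction that $\d_{X_M}$ preserves Dirichlet boundary conditions and by the $X_M$-Hodge–Morrey decomposition being adapted to the pair $(M,\partial M)$; everything else is a formal consequence of orthogonality in (\ref{X_M-H.M.F}) and of Theorem \ref{Thm.interseactions} via Corollary \ref{coro.2.5}.
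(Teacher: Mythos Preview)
Your proposal is correct. The paper's own proof is much terser: for part~1 it simply cites that each absolute $X_M$-cohomology class has a unique Neumann $X_M$-harmonic representative (Theorem~3.16(a) of \cite{Our paper}), and for part~2 it invokes $X_M$-Poincar\'e--Lefschetz duality (Theorem~3.16(c) of \cite{Our paper}) to reduce to part~1. You instead unpack these cited results, reproving the isomorphism $\mathcal{H}^{\pm}_{X_M,N}(M)\cong H^{\pm}_{X_M}(M)$ directly from the decomposition~(\ref{X_M-H.M.F}) and handling part~2 by passing explicitly through~$\star$ to the Dirichlet fields. This is the same mathematics at a lower level of abstraction --- your argument is essentially how those theorems in \cite{Our paper} are established --- so the gain is self-containment at the cost of length. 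One small wrinkle: in your part~2 surjectivity sketch, the reason the $\mathcal{H}^{\pm}_{X_M,\mathrm{co}}$-component of a relatively closed form vanishes is not ``orthogonality against coclosed forms'' per se, but that the Dirichlet condition $i^*\omega=0$ (together with $\delta_{X_M}\beta=0$ from closedness) forces that component into $\mathcal{H}^{\pm}_{X_M,D}\cap\mathcal{H}^{\pm}_{X_M,\mathrm{co}}=\{0\}$ by~(\ref{F1}); you gesture at this in your final paragraph, so the fix is minor.
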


\begin{proof}
\begin{itemize}
\item[1-] $f$ is a well-defined map because $\ker i^*=\{0\}$ (corollary
\ref{coro.2.5}). Furthermore, $f$ is a bijection because there
exists a unique Neumann $X_M$-harmonic field in any absolute $X_M$-
cohomology class (Theorem 3.16(a) of \cite{Our paper}) hence part
(1) holds.

\item[2-] It follows from part (1) by using $X_{M}$-Poincar\'e-Lefschetz duality (Theorem 3.16(c) of \cite{Our
paper}).

\end{itemize}
\end{proof}

\begin{corollary}\label{coro.2.7}
$\dim(\mathcal{H}^{\pm}_{X_{M},N}(M)
)=\dim(i^{*}\mathcal{H}^{\pm}_{X_{M},N}(M) )= \dim(
H^{\pm}_{X_M}(M))=\dim(H^{n-\pm}_{X_M}(M,\partial M)).$
\end{corollary}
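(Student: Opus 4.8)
The plan is to assemble Corollary~\ref{coro.2.7} purely as a chain of equalities harvested from the three preceding corollaries, so that no new analytic input is needed. The strategy is to read off each equality from the appropriate isomorphism and to supply the one duality statement that links the two relative/absolute groups.

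First I would note that Corollary~\ref{coro.2.5} gives a linear isomorphism $i^*:\mathcal{H}^{\pm}_{X_{M},N}(M)\to i^{*}\mathcal{H}^{\pm}_{X_{M},N}(M)$, hence $\dim\mathcal{H}^{\pm}_{X_{M},N}(M)=\dim i^{*}\mathcal{H}^{\pm}_{X_{M},N}(M)$, giving the first equality. Next, part~(1) of Corollary~\ref{coro.2.6} provides an isomorphism $f:i^{*}\mathcal{H}^{\pm}_{X_{M},N}(M)\to H^{\pm}_{X_M}(M)$, which yields $\dim i^{*}\mathcal{H}^{\pm}_{X_{M},N}(M)=\dim H^{\pm}_{X_M}(M)$, the second equality. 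For the third equality I would invoke part~(2) of Corollary~\ref{coro.2.6}: the map $h:i^{*}\mathcal{H}^{n-\pm}_{X_{M},N}(M)\to H^{\pm}_{X_M}(M,\partial M)$ is an isomorphism, so $\dim H^{\pm}_{X_M}(M,\partial M)=\dim i^{*}\mathcal{H}^{n-\pm}_{X_{M},N}(M)=\dim\mathcal{H}^{n-\pm}_{X_{M},N}(M)$ (the last step again by Corollary~\ref{coro.2.5} applied with parity $n-\pm$ in place of $\pm$). Replacing $\pm$ by $n-\pm$ throughout the first three equalities then shows $\dim\mathcal{H}^{n-\pm}_{X_{M},N}(M)=\dim H^{n-\pm}_{X_M}(M)$; but by $X_M$-Poincar\'e--Lefschetz duality (Theorem~3.16(c) of \cite{Our paper}), which was already used in the proof of Corollary~\ref{coro.2.6}(2), one has $H^{\pm}_{X_M}(M)\cong H^{n-\pm}_{X_M}(M,\partial M)$, and therefore the desired last equality $\dim H^{\pm}_{X_M}(M)=\dim H^{n-\pm}_{X_M}(M,\partial M)$ also follows directly from applying $f$ at parity $n-\pm$ and then $h$.

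Concretely, I would just write: by Corollary~\ref{coro.2.5}, $\dim\mathcal{H}^{\pm}_{X_{M},N}(M)=\dim i^{*}\mathcal{H}^{\pm}_{X_{M},N}(M)$; by Corollary~\ref{coro.2.6}(1), this equals $\dim H^{\pm}_{X_M}(M)$; and applying Corollary~\ref{coro.2.6}(2) with $\pm$ replaced by $n-\pm$ together with Corollary~\ref{coro.2.5} gives $\dim H^{n-\pm}_{X_M}(M,\partial M)=\dim i^{*}\mathcal{H}^{\pm}_{X_{M},N}(M)$, closing the loop. All four quantities are finite because $\mathcal{H}^{\pm}_{X_{M},N}(M)$ is finite dimensional, as recalled in the introduction.

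There is essentially no obstacle here: the statement is a bookkeeping corollary, and the only point requiring a little care is matching parities correctly — making sure that ``$n-\pm$'' is interpreted consistently (as in the Hodge-star isomorphism $\star:\Omega_{G,D}^{\pm}\to\Omega_{G,N}^{n-\pm}$ introduced earlier) when one substitutes it for $\pm$ in Corollaries~\ref{coro.2.5} and~\ref{coro.2.6}. The genuine content — that Neumann and Dirichlet $X_M$-harmonic fields meet only at $0$, and that each cohomology class has a unique harmonic-field representative — has already been established in Theorem~\ref{Thm.interseactions} and imported from \cite{Our paper}, so the proof is a two-line deduction.
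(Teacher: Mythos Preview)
Your proposal is correct and matches the paper's approach: the paper states Corollary~\ref{coro.2.7} without proof, leaving it as an immediate consequence of Corollaries~\ref{coro.2.5} and~\ref{coro.2.6} together with $X_M$-Poincar\'e--Lefschetz duality, which is precisely the chain of equalities you assemble. Your parity bookkeeping (substituting $n-\pm$ for $\pm$ in Corollary~\ref{coro.2.6}(2) to obtain $\dim H^{n-\pm}_{X_M}(M,\partial M)=\dim i^{*}\mathcal{H}^{\pm}_{X_{M},N}(M)$) is exactly right.
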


In fact, it is worth saying that our paper \cite{Our paper} (in
particular, the relation between the $X_M$-cohomology and
$X_M$-harmonic fields) can be used to recover most of the results in
chapter three of \cite{Schwarz} on $\Omega^{\pm}_G(M) $  but in
terms of the operators $\d_{X_M},$  $\delta_{X_M}$ and
$\Delta_{X_M}$. In this paper we will need the following theorem
which can be proved by using the $X_M$-Hodge-Morrey-Friedrichs
decompositions (\ref{X_M-H.M.F}).
\begin{theorem}\label{thm.2.7}
Let $M$ be a compact, oriented smooth Riemannian manifold of
dimension $n$ with boundary and with an action of a torus $G$ which
acts by isometries on $M$. Given $\chi,\rho \in \Omega^{\mp}_G(M)$
and $\psi \in \Omega^{\pm}_G(\partial M) $, the boundary value
problem

\begin{eqnarray}\label{eq.bvp1}
  d_{X_M}\omega=\chi & and & \delta_{X_M}\omega=\rho  \quad on \quad M \nonumber\\
   & i^*\omega=\psi & \quad on \quad \partial M
\end{eqnarray}
is solvable for $\omega \in \Omega^{\pm}_G(M)$ if and only if the
data obey the integrability conditions
\begin{equation}\label{eq.integ.con1}
    \delta_{X_M} \rho=0,\quad
\left<\rho,\,\kappa\right>=0, \quad \forall \kappa \in
\mathcal{H}^{\mp}_{X_{M},D}(M)
\end{equation}
and
\begin{equation}\label{eq.integ.con2}
   d_{X_M}\chi=0, \quad
i^*\chi=d_{X_M}\psi, \quad \left<\chi,\,\kappa\right> =
\int_{\partial M} \psi\wedge i^*\star\kappa, \quad \forall \kappa
\in \mathcal{H}^{\mp}_{X_{M},D}(M)
\end{equation}

 The solution of eq.(\ref{eq.bvp1}) is unique up to arbitrary Dirichlet $X_M$-
harmonic fields $\kappa \in \mathcal{H}^{\pm}_{X_{M},D}(M)$
\end{theorem}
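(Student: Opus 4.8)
The plan is to reduce the inhomogeneous boundary value problem \eqref{eq.bvp1} to a homogeneous one by subtracting off a convenient particular form, and then to apply the $X_M$-Hodge-Morrey-Friedrichs decomposition \eqref{X_M-H.M.F}. First I would establish necessity of the integrability conditions: if $\omega$ solves \eqref{eq.bvp1}, then $\delta_{X_M}\rho = \delta_{X_M}^2\omega = 0$ and $d_{X_M}\chi = d_{X_M}^2\omega = 0$ come for free from $d_{X_M}^2 = \delta_{X_M}^2 = 0$; the condition $i^*\chi = i^*d_{X_M}\omega = d_{X_M}i^*\omega = d_{X_M}\psi$ uses that $d_{X_M}$ commutes with $i^*$ (noted in the introduction). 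The two orthogonality/pairing conditions against $\kappa \in \mathcal{H}^{\mp}_{X_M,D}(M)$ follow from Green's formula \eqref{eq.Green's formula}: applied to $\langle d_{X_M}\omega,\kappa\rangle$ and $\langle \omega, \delta_{X_M}\kappa\rangle$, using $d_{X_M}\kappa = \delta_{X_M}\kappa = 0$ and $i^*\kappa = 0$ (Dirichlet), one reads off $\langle\chi,\kappa\rangle = \int_{\partial M}\psi\wedge i^*\star\kappa$; setting instead up the pairing with $\delta_{X_M}\omega$ and using $i^*\star\kappa$-free boundary terms gives $\langle\rho,\kappa\rangle = 0$ (here one must be careful which Green identity to invoke, but $\mathcal{H}^{\mp}_{X_M,D}$ being both closed and co-closed in the Witten sense makes all boundary terms collapse except the one surviving term).

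For sufficiency, I would first dispose of the boundary data: choose any $\Phi \in \Omega^{\pm}_G(M)$ with $i^*\Phi = \psi$ (an extension always exists) and set $\chi' = \chi - d_{X_M}\Phi$, $\rho' = \rho - \delta_{X_M}\Phi$. Solving \eqref{eq.bvp1} is equivalent to solving $d_{X_M}\omega' = \chi'$, $\delta_{X_M}\omega' = \rho'$ with $i^*\omega' = 0$, i.e. $\omega' \in \Omega^{\pm}_{G,D}$. Now $\chi'$ is $d_{X_M}$-closed (since $i^*\chi = d_{X_M}\psi$ forces $i^*\chi' = 0$, so $\chi' \in \Omega^{\mp}_{G,D}$ and is closed) and $\rho'$ is $\delta_{X_M}$-closed. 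The idea is to split $\omega' = d_{X_M}\alpha + \delta_{X_M}\beta$ with $\alpha \in \Omega^{\mp}_{G,D}$ and $\beta \in \Omega^{\mp}_{G,N}$, so that automatically $d_{X_M}\omega' = d_{X_M}\delta_{X_M}\beta$ and $\delta_{X_M}\omega' = \delta_{X_M}d_{X_M}\alpha$, and $i^*(d_{X_M}\alpha) = d_{X_M}i^*\alpha = 0$, while $i^*\delta_{X_M}\beta = 0$ needs checking (or one arranges $\delta_{X_M}\beta$ to lie in the Dirichlet-trace-zero part via the decomposition). Then I would invoke the $X_M$-Hodge-Morrey-Friedrichs machinery: $\chi'$, being $d_{X_M}$-exact with Dirichlet conditions modulo harmonic obstructions, decomposes so that its component in $\mathcal{E}^{\mp}_{X_M}(M)$ captures everything provided $\chi'$ is orthogonal to $\mathcal{H}^{\mp}_{X_M,D}(M)$ — which is exactly the translated pairing condition — and similarly $\rho' \in \mathcal{C}^{\mp}_{X_M}(M)$ provided it is orthogonal to $\mathcal{H}^{\mp}_{X_M,N}(M)$; here one needs to check that the stated conditions, after subtracting $d_{X_M}\Phi$ and $\delta_{X_M}\Phi$, translate into precisely these orthogonality statements, the boundary integral $\int_{\partial M}\psi\wedge i^*\star\kappa$ being exactly the Green's-formula correction term $\langle d_{X_M}\Phi,\kappa\rangle$.

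The uniqueness statement is the easiest part: if $\omega_1,\omega_2$ both solve \eqref{eq.bvp1}, then $\omega = \omega_1 - \omega_2$ satisfies $d_{X_M}\omega = \delta_{X_M}\omega = 0$ and $i^*\omega = 0$, so $\omega \in \mathcal{H}^{\pm}_{X_M,D}(M)$; conversely adding any such field preserves all three equations since Dirichlet $X_M$-harmonic fields are $d_{X_M}$- and $\delta_{X_M}$-closed and have vanishing pullback.

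The main obstacle I expect is the bookkeeping in sufficiency: one has to simultaneously control $d_{X_M}\omega$, $\delta_{X_M}\omega$, and $i^*\omega$, and the two conditions on $\chi,\rho$ are coupled through the common unknown $\omega$ and through $\psi$. The clean way is to run the argument in two stages — first reduce to $\psi = 0$ via the extension $\Phi$, carefully tracking how the boundary integral and the condition $i^*\chi = d_{X_M}\psi$ transform; then solve the $\psi=0$ problem by a Hodge-style argument, splitting into the $d_{X_M}$-part and $\delta_{X_M}$-part of \eqref{X_M-H.M.F} and using that the orthogonality conditions against $\mathcal{H}^{\mp}_{X_M,D}(M)$ are precisely what is needed to land in $\mathcal{E}^{\mp}_{X_M}(M) \oplus \mathcal{C}^{\mp}_{X_M}(M)$. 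A subtle point worth isolating is verifying $i^*(\delta_{X_M}\beta) = 0$ for the co-exact piece: this should follow because the relevant $\beta$ can be chosen in $\mathcal{C}^{\mp}_{X_M}$-producing data, or because $\delta_{X_M}\beta$ lies in a summand of \eqref{X_M-H.M.F} all of whose elements vanish appropriately on the boundary — essentially the same normal-component computation used in the classical Schwarz setup, now transcribed to $d_{X_M}$, $\delta_{X_M}$.
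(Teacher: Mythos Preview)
Your proposal is correct and follows exactly the approach the paper intends: the paper's own proof consists of the single sentence ``The proof is analogous to the proof of theorem 3.2.5 of \cite{Schwarz} but in terms of the operators $d_{X_M}$ and $\delta_{X_M}$,'' and what you have sketched---necessity via Green's formula \eqref{eq.Green's formula}, reduction to $\psi=0$ by an extension $\Phi$, and sufficiency via the $X_M$-Hodge--Morrey--Friedrichs decomposition \eqref{X_M-H.M.F}---is precisely Schwarz's argument transcribed to the Witten operators. You have in fact supplied more detail than the paper does; the subtle point you flag about $i^*(\delta_{X_M}\beta)=0$ is handled in Schwarz's scheme by solving the $\chi'$- and $\rho'$-equations as separate Dirichlet and Neumann potential problems rather than by a single ansatz, which sidesteps the boundary-compatibility issue you anticipate.
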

\begin{proof}
The proof is analogous to the proof of theorem 3.2.5 of
\cite{Schwarz} but in terms of the operators $d_{X_M}$ and
$\delta_{X_M}$ .
\end{proof}

\begin{lemma}\label{lema.1}
\begin{equation}\label{lemma on boundary}
i^{*}\mathcal{H}^{\pm}_{X_{M}}(M)=\mathcal{E}^{\pm}_{X_M}(\partial
M)+ i^{*}\mathcal{H}^{\pm}_{X_{M},N}(M)
\end{equation}
where $\mathcal{E}^{\pm}_{X_M}(\partial M)=\{d_{X_{M}} \alpha \mid
\alpha \in \Omega^{\mp}_{G}(\partial M)\} $
\end{lemma}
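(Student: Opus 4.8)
The plan is to prove the two inclusions of \eqref{lemma on boundary} separately. The inclusion $\mathcal{E}^{\pm}_{X_M}(\partial M)+ i^{*}\mathcal{H}^{\pm}_{X_{M},N}(M)\subseteq i^{*}\mathcal{H}^{\pm}_{X_{M}}(M)$ is the easy half: the second summand is contained in $i^{*}\mathcal{H}^{\pm}_{X_{M}}(M)$ by definition, so I only need to produce, for a given $\alpha\in\Omega^{\mp}_{G}(\partial M)$, an $X_M$-harmonic field $\lambda$ on $M$ with $i^*\lambda = \d_{X_M}\alpha$. This is exactly where Theorem \ref{thm.2.7} enters: I will apply it with $\chi=0$, $\rho=0$ and boundary datum $\psi=\d_{X_M}\alpha$, solving $\d_{X_M}\omega=0$, $\delta_{X_M}\omega=0$ on $M$, $i^*\omega=\d_{X_M}\alpha$ on $\partial M$. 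Such an $\omega$ is then an $X_M$-harmonic field with $i^*\omega=\d_{X_M}\alpha$, giving the desired inclusion. I need to check the integrability conditions \eqref{eq.integ.con1}–\eqref{eq.integ.con2} are automatically satisfied for this choice: conditions \eqref{eq.integ.con1} are trivial since $\rho=0$; in \eqref{eq.integ.con2} we have $\chi=0$, so $\d_{X_M}\chi=0$ holds, the condition $i^*\chi=\d_{X_M}\psi$ becomes $0=\d_{X_M}\d_{X_M}\alpha=0$ (using $\d_{X_M}^2=0$ and that $i^*$ and $\d_{X_M}$ commute), and the last condition $\langle\chi,\kappa\rangle=\int_{\partial M}\psi\wedge i^*\star\kappa$ reduces to showing $\int_{\partial M}\d_{X_M}\alpha\wedge i^*\star\kappa=0$ for all $\kappa\in\mathcal{H}^{\mp}_{X_M,D}(M)$, which follows from Green's formula \eqref{eq.Green's formula} together with $\d_{X_M}\kappa=0$, $\delta_{X_M}\kappa=0$, and $i^*\kappa=0$ (Dirichlet condition). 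Hence that half is done.

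For the reverse inclusion $i^{*}\mathcal{H}^{\pm}_{X_{M}}(M)\subseteq\mathcal{E}^{\pm}_{X_M}(\partial M)+ i^{*}\mathcal{H}^{\pm}_{X_{M},N}(M)$, take $\lambda\in\mathcal{H}^{\pm}_{X_M}(M)$ and decompose it using the $X_M$-Friedrichs decomposition \eqref{F2}: $\lambda=\lambda_N+\d_{X_M}\sigma$ with $\lambda_N\in\mathcal{H}^{\pm}_{X_M,N}(M)$ and $\d_{X_M}\sigma\in\mathcal{H}^{\pm}_{X_M,\mathrm{ex}}(M)$. Applying $i^*$ and using that $i^*$ commutes with $\d_{X_M}$ gives $i^*\lambda=i^*\lambda_N+\d_{X_M}(i^*\sigma)$, where $i^*\sigma\in\Omega^{\mp}_{G}(\partial M)$, so $i^*\lambda\in i^{*}\mathcal{H}^{\pm}_{X_{M},N}(M)+\mathcal{E}^{\pm}_{X_M}(\partial M)$, as required.

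I expect the main obstacle to lie in the careful verification of the integrability conditions of Theorem \ref{thm.2.7} in the first half — in particular the boundary-integral condition, where one must invoke Green's formula \eqref{eq.Green's formula} and keep track of signs and parities carefully (noting $\kappa$ has parity $\mp$ while $\psi=\d_{X_M}\alpha$ has parity $\pm$, so $\psi\wedge i^*\star\kappa$ is a top-degree form on $\partial M$ as needed). The rest is essentially formal, relying on $\d_{X_M}^2=0$, the commutation of $i^*$ with $\d_{X_M}$, and the Friedrichs decomposition already established in \cite{Our paper}. One should also remark that the sum on the right-hand side of \eqref{lemma on boundary} need not be direct, mirroring the situation in Corollary \ref{coro.decomposition}.
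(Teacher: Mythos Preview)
Your proposal is correct and follows essentially the same route as the paper: one inclusion via the $X_M$-Friedrichs decomposition \eqref{F2}, the other via Theorem~\ref{thm.2.7} applied with $\chi=\rho=0$ and $\psi=\d_{X_M}\alpha$. You are in fact more explicit than the paper in verifying the integrability conditions; the only small point to make precise is that to invoke Green's formula \eqref{eq.Green's formula} for the boundary-integral condition you must first extend $\alpha\in\Omega^{\mp}_G(\partial M)$ to some $\tilde\alpha\in\Omega^{\mp}_G(M)$ and then apply \eqref{eq.Green's formula} to the pair $(\d_{X_M}\tilde\alpha,\kappa)$, using $\d_{X_M}^2=0$ and $\delta_{X_M}\kappa=0$ (the conditions $\d_{X_M}\kappa=0$ and $i^*\kappa=0$ are not actually needed here).
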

\begin{proof}
We first prove that,
$i^{*}\mathcal{H}^{\pm}_{X_{M}}(M)\subseteq\mathcal{E}^{\pm}_{X_M}(\partial
M)+ i^{*}\mathcal{H}^{\pm}_{X_{M},N}(M) $.

Suppose $ \lambda \in \mathcal{H}^{\pm}_{X_{M}}(M)$ then the
$X_M$-Friedrichs Decomposition  theorem (\ref{F2}) implies that
$$ \lambda=\d_{X_{M}} \alpha +\lambda_N \in
\mathcal{H}^{\pm}_{X_{M},N}(M)\oplus\mathcal{H}^{\pm}_{X_{M},\mathrm{ex}}(M)$$Hence,
$$ i^{*}\lambda=\d_{X_M}i^{*}\alpha +i^{*}\lambda_N.$$
 Conversely, it is clear that $i^{*}\mathcal{H}^{\pm}_{X_{M},N}(M)\subseteq
 i^{*}\mathcal{H}^{\pm}_{X_{M}}(M)$. So, we only need to prove that $\mathcal{E}^{\pm}_{X_M}(\partial
 M) \subseteq i^{*}\mathcal{H}^{\pm}_{X_{M}}(M)$. Suppose, $\eta=d_{X_{M}}\alpha \in \mathcal{E}^{\pm}_{X_M}(\partial
 M)$ then $\eta$ satisfies
 \begin{equation}\label{eq.2.7}
 d_{X_M} \eta=0, \quad  \int_{\partial M} d_{X_{M}}\alpha \wedge i^*\star\kappa=0, \quad \forall \kappa
\in \mathcal{H}^{\mp}_{X_{M},D}(M)
 \end{equation}
Clearly, theorem \ref{thm.2.7} asserts that the condition
(\ref{eq.2.7}) is a necessary and sufficient condition for the
existence of $\lambda \in \mathcal{H}^{\pm}_{X_{M}}(M)$ such that
$\eta=i^{*}\lambda.$
\end{proof}

\begin{remark}\label{remark2}
In \cite{Our paper}, we define the spaces
$$
  \mathcal{H}^{\pm}_{X_{M},\mathrm{co}}(M) = \{ \eta \in \mathcal{H}^{\pm}_{X_{M}}(M)\mid
  \eta=\delta_{X_{M}}\alpha\}, \qquad
  \mathcal{H}^{\pm}_{X_{M},\mathrm{ex}}(M)= \{ \xi \in \mathcal{H}^{\pm}_{X_{M}}(M)\mid \xi=d_{X_{M}}\sigma\}
$$
and our proof of the $X_M$-Friedrichs Decomposition Theorem
(\ref{F1} and \ref{F2}) shows that the differential forms $ \alpha$
and $\sigma $ can be chosen to be $X_M$-closed (i.e. $
d_{X_M}\alpha=0$ ) and $X_M$-coclosed (i.e. $\delta_{X_M}\sigma=0$)
respectively and in both cases $ \alpha$ and $\sigma $ should be
$X_M$-harmonic forms (i.e. $\Delta_{X_M}\alpha=\Delta_{X_M}\sigma=0
$). This observation will be used in section 4.
\end{remark}

\section{$X_M$-DN operator }
 Before defining this operator, we first need to prove the solvability of a certain boundary value problem \textsc{bvp} (\ref{eq.bvp2}) which is shown in
 theorem~\ref{bvp}. This theorem represents the keystone to define the $X_M$-DN operator and then to exploiting a connection
between this $X_M$-DN operator and $X_M$-cohomology via the Neumann
$X_M$-trace space $i^{*}\mathcal{H}^{\pm}_{X_{M},N}(M) $.

\begin{theorem}\label{bvp}
Let $M$ be a compact, oriented smooth Riemannian manifold of
dimension $n$ with boundary and with an action of a torus $G$ which
acts by isometries on $M$. Given $\theta \in \Omega^{\pm}_G(\partial
M)$ and $ \eta \in \Omega^{\pm}_G(M),$ then the \textsc{bvp}
\begin{equation} \label{eq.bvp2}
\left\{\begin{array}{rcl}
  \Delta_{X_{M}}\omega &=& \eta \quad \textrm{on} \quad M \\
   i^{*}\omega &=& \theta \quad \textrm{on} \quad \partial M\\
    i^{*}(\delta_{X_{M}}\omega)&=& 0 \quad  \textrm{on} \quad \partial M.
\end{array}\right.
\end{equation}
is solvable for $\omega \in \Omega^{\pm}_G(M)$ if and only if
\begin{equation}\label{eq.8}
    \left<\eta,\,\kappa_D\right>=0, \quad
\forall \kappa_D \in \mathcal{H}^{\pm}_{X_{M},D}(M)
\end{equation}
The solution of \textsc{bvp} (\ref{eq.bvp2}) is unique up to an
arbitrary Dirichlet $X_M$-harmonic field
$\mathcal{H}^{\pm}_{X_{M},D}(M).$
\end{theorem}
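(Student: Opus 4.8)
The plan is to imitate the standard argument for the analogous boundary value problem in the classical (non-equivariant) Hodge theory — compare Schwarz \cite{Schwarz}, Chapter 3, and the setup of Theorem \ref{thm.2.7} above — but everywhere replacing $\d$, $\delta$, $\Delta$ by $\d_{X_M}$, $\delta_{X_M}$, $\Delta_{X_M}$, and working throughout with invariant forms in $\Omega_G^\pm$. Since $\Delta_{X_M}=\d_{X_M}\delta_{X_M}+\delta_{X_M}\d_{X_M}$ is a diagonal-form operator (second-order, with $\Delta I$ as leading symbol), the same elliptic theory applies; the one genuinely new point to be careful about is that $\d_{X_M}$ is inhomogeneous in form-degree, so the boundary conditions and Green's formula must be handled in the $\mathbb{Z}/2$-graded ($\pm$) setting using the $X_M$-Green's formula (\ref{eq.Green's formula}).

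\textbf{Necessity.} First I would show that (\ref{eq.8}) is necessary. If $\omega$ solves (\ref{eq.bvp2}) and $\kappa_D\in\mathcal{H}^{\pm}_{X_M,D}(M)$, then using $\Delta_{X_M}\omega=\eta$ and integrating by parts twice via (\ref{eq.Green's formula}), the boundary terms involve $i^*\kappa_D$ and $i^*\star\kappa_D$; the first vanishes because $\kappa_D$ is Dirichlet, the second because $\d_{X_M}\kappa_D=\delta_{X_M}\kappa_D=0$ forces the relevant trace of $\star\kappa_D$ to drop out (one also uses $i^*(\delta_{X_M}\omega)=0$ to kill the remaining boundary contribution). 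Since $\kappa_D$ is $X_M$-harmonic, $\langle\eta,\kappa_D\rangle=\langle\Delta_{X_M}\omega,\kappa_D\rangle=\langle\omega,\Delta_{X_M}\kappa_D\rangle+(\text{boundary terms})=0$.

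\textbf{Sufficiency.} Assume (\ref{eq.8}). The strategy is to split the problem. Using the $X_M$-Hodge--Morrey--Friedrichs decomposition (\ref{X_M-H.M.F}) and (\ref{eq.8}), write $\eta$ as something in the range of $\Delta_{X_M}$ on the appropriate space: concretely, $\eta$ orthogonal to $\mathcal{H}^{\pm}_{X_M,D}(M)$ means $\eta\in\mathcal{E}^{\pm}_{X_M}(M)\oplus\mathcal{C}^{\pm}_{X_M}(M)\oplus\mathcal{H}^{\pm}_{X_M,\mathrm{co}}(M)$, so one can first solve $\Delta_{X_M}\omega_0=\eta$ with, say, Dirichlet conditions $i^*\omega_0=0$, $i^*(\delta_{X_M}\omega_0)=0$ — this is exactly the content of the $X_M$-version of the Dirichlet problem for $\Delta_{X_M}$, whose solvability under (\ref{eq.8}) follows from the decomposition theorems and standard elliptic regularity (theorem 3.4 of \cite{Our paper} for smoothness). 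Then I must correct the boundary trace: find $\omega_1$ with $\Delta_{X_M}\omega_1=0$, $i^*(\delta_{X_M}\omega_1)=0$ and $i^*\omega_1=\theta$. For this, invoke Theorem \ref{thm.2.7} (or a direct harmonic-extension argument): the form $\d_{X_M}\theta\in\Omega^{\mp}_G(\partial M)$ together with $0$ satisfies the integrability conditions (\ref{eq.integ.con1})--(\ref{eq.integ.con2}) — here one checks $\d_{X_M}(\d_{X_M}\theta)=0$ automatically since $\d_{X_M}^2=0$, $i^*(\d_{X_M}\theta)=\d_{X_M}\theta$ trivially, and the pairing condition against $\mathcal{H}^{\mp}_{X_M,D}(M)$ holds by Stokes/Green for $\d_{X_M}$ — so there is $\omega_1\in\Omega^{\pm}_G(M)$ with $\d_{X_M}\omega_1=\d_{X_M}\theta'$ (a chosen $X_M$-harmonic-field extension), $\delta_{X_M}\omega_1=0$, $i^*\omega_1=\theta$; in particular $\Delta_{X_M}\omega_1=\d_{X_M}\delta_{X_M}\omega_1+\delta_{X_M}\d_{X_M}\omega_1=0$ and $i^*(\delta_{X_M}\omega_1)=0$. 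Then $\omega=\omega_0+\omega_1$ solves (\ref{eq.bvp2}).

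\textbf{Uniqueness.} If $\omega$ solves the homogeneous problem ($\eta=0$, $\theta=0$), then pairing $\Delta_{X_M}\omega=0$ with $\omega$ and using (\ref{eq.Green's formula}) twice together with $i^*\omega=0$ and $i^*(\delta_{X_M}\omega)=0$ gives $\|\d_{X_M}\omega\|^2+\|\delta_{X_M}\omega\|^2=0$, hence $\omega\in\mathcal{H}^{\pm}_{X_M}(M)$; since also $i^*\omega=0$, $\omega\in\mathcal{H}^{\pm}_{X_M,D}(M)$, and conversely every such field solves the homogeneous problem.

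\textbf{Main obstacle.} The delicate step is the sufficiency direction: reducing (\ref{eq.bvp2}) to two already-established problems (a $\Delta_{X_M}$-Dirichlet problem and the $\d_{X_M}$-$\delta_{X_M}$ boundary problem of Theorem \ref{thm.2.7}) and verifying that the boundary conditions $i^*(\delta_{X_M}\omega)=0$ survive the splitting. One must be careful that solving $\Delta_{X_M}\omega_0=\eta$ can be arranged with $i^*(\delta_{X_M}\omega_0)=0$ — this is where the precise form of the $X_M$-Hodge--Morrey--Friedrichs decomposition and the fact that $\delta_{X_M}$ preserves Neumann conditions are used — and that the correction term $\omega_1$ contributes nothing to $i^*(\delta_{X_M}\omega)$. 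Everything else (elliptic regularity, the $\pm$-bookkeeping) is routine once the classical template of \cite{Schwarz} is transcribed into the $\d_{X_M}$-language.
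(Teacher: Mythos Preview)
Your overall plan --- necessity via Green's formula, uniqueness by the same energy identity, and sufficiency by splitting into a homogeneous Dirichlet problem plus a boundary correction --- is exactly the paper's strategy, and your necessity and uniqueness paragraphs are fine.

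The gap is in the sufficiency step, in your construction of $\omega_1$. You want $\omega_1$ with $\Delta_{X_M}\omega_1=0$, $i^*\omega_1=\theta$, $i^*(\delta_{X_M}\omega_1)=0$, and you invoke Theorem~\ref{thm.2.7} with $\chi=\d_{X_M}\theta'$ (for some extension $\theta'$ of $\theta$) and $\rho=0$. That does give $\omega_1$ with $\d_{X_M}\omega_1=\d_{X_M}\theta'$ and $\delta_{X_M}\omega_1=0$, but then $\Delta_{X_M}\omega_1=\delta_{X_M}\d_{X_M}\theta'$, which is \emph{not} zero for a generic extension $\theta'$. Your parenthetical ``a chosen $X_M$-harmonic-field extension'' does not resolve this: you would need $\d_{X_M}\theta'$ itself to lie in $\mathcal{H}^{\mp}_{X_M}(M)$, and you have neither produced such a $\theta'$ nor verified that the third integrability condition in (\ref{eq.integ.con2}) still holds for that choice of $\chi$. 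As written, your $\omega_1$ satisfies only the two boundary conditions of (\ref{eq.bvp2}), not the equation $\Delta_{X_M}\omega_1=0$, so $\omega_0+\omega_1$ does not solve the problem.

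The paper sidesteps this by reversing the order of your two sub-problems. It first constructs an extension $\omega_1$ with $i^*\omega_1=\theta$ lying in $\mathcal{C}^{\pm}_{X_M}(M)\oplus\mathcal{H}^{\pm}_{X_M}(M)$: take any extension of $\theta$ and discard its $\mathcal{E}^{\pm}_{X_M}(M)$-component in the Hodge--Morrey decomposition~(\ref{X_M.H.M}); since that component has zero boundary trace (it is $\d_{X_M}\alpha$ with $\alpha\in\Omega^{\mp}_{G,D}$), the remainder still restricts to $\theta$. Then $\delta_{X_M}\omega_1=0$ automatically, so the third boundary condition holds, and one checks via (\ref{eq.Green's formula}) that $\Delta_{X_M}\omega_1=\delta_{X_M}\d_{X_M}\delta_{X_M}\beta_{\omega_1}\in\mathcal{H}^{\pm}_{X_M,D}(M)^\perp$. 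Now the homogeneous Dirichlet problem (proposition~3.8 of \cite{Our paper}) is applied to $\eta-\Delta_{X_M}\omega_1$, which is still orthogonal to $\mathcal{H}^{\pm}_{X_M,D}(M)$, producing $\omega_2$; then $\omega=\omega_1+\omega_2$ solves (\ref{eq.bvp2}). The key point is that the paper never needs $\Delta_{X_M}\omega_1=0$, only $\delta_{X_M}\omega_1=0$, which is what makes the extension step elementary and avoids the circularity in your argument.
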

\begin{proof}
Suppose eq.(\ref{eq.bvp2}) has a solution then one can easily show
that the condition (\ref{eq.8}) holds by using Green's formula
(\ref{eq.Green's formula}).

Now, suppose the condition $ \left<\eta,\,\kappa_D\right>=0, \quad
\forall \kappa_D \in \mathcal{H}^{\pm}_{X_{M},D}(M)$ is given (i.e.
$\eta \in \mathcal{H}^{\pm}_{X_{M},D}(M)^{\perp}$ ). Since $\theta
\in \Omega^{\pm}_G(\partial M)$, we can construct an extension
$\omega_1 \in \Omega^{\pm}_G(M)$ of the differential form $\theta
\in \Omega^{\pm}_G(\partial M)$ such that
$$i^*\omega_1=\theta, \quad
\omega_1=\delta_{X_M}\beta_{\omega_1}+\lambda_{\omega_1}\in
\mathcal{C}^{\pm}_{X_M}(M)\oplus \mathcal{H}^{\pm}_{X_{M}}(M).$$ But
$\Delta_{X_M} \omega_1= \delta_{X_M}
d_{X_M}\delta_{X_M}\beta_{\omega_1}$, then (\ref{eq.Green's
formula}) implies that $\Delta_{X_M} \omega_1 \in
\mathcal{H}^{\pm}_{X_{M},D}(M)^{\perp} $ as well. Hence,
$\eta-\Delta_{X_M} \omega_1 \in
\mathcal{H}^{\pm}_{X_{M},D}(M)^{\perp}$. We now apply proposition
3.8 of \cite{Our paper} which for smooth invariant forms states that
for each $\overline{\eta}\in\mathcal{H}^{\pm}_{X_{M},D}(M)^{\perp}$
there is a unique smooth differential form $\overline{\omega} \in
\Omega^{\pm}_{G,D}\cap \mathcal{H}^{\pm}_{X_{M},D}(M)^{\perp}$
satisfying the \textsc{bvp} (\ref{eq.bvp2}) but with $\eta
=\overline{\eta}$ and $\theta=0$. Since $\eta-\Delta_{X_M} \omega_1
\in \mathcal{H}^{\pm}_{X_{M},D}(M)^{\perp}$ is smooth, it follows
from this there is a unique smooth differential form $\omega_2 \in
\Omega^{\pm}_{G,D}\cap \mathcal{H}^{\pm}_{X_{M},D}(M)^{\perp} $
which satisfies the \textsc{bvp}
\begin{equation} \label{eq.bvp3}
\left\{\begin{array}{rcl}
  \Delta_{X_{M}}\omega_2 &=& \eta-\Delta_{X_M} \omega_1 \quad \textrm{on} \quad M \\
   i^{*}\omega_2 &=& 0   \quad \textrm{on}   \quad \partial M\\
    i^{*}(\delta_{X_{M}}\omega_2)&=& 0  \quad \textrm{on}   \quad \partial M.
\end{array}\right.
\end{equation}
Now, let $ \omega_2=\omega-\omega_1$, then the \textsc{bvp}
(\ref{eq.bvp3}) turns into the \textsc{bvp} (\ref{eq.bvp2}). Hence,
there exists a solution to the \textsc{bvp} (\ref{eq.bvp2}) which is
$ \omega=\omega_1+\omega_2$, where the uniqueness of $\omega$ is up
to an arbitrary Dirichlet $X_M$-harmonic fields.
\end{proof}
\begin{definition}[$X_M$-DN operator $\Lambda_{X_M}$]\label{def.DN operator}
Let $M$ be the manifold in question. We consider the \textsc{bvp}
(\ref{eq.bvp2}) with $\eta=0$, i.e.
\begin{equation} \label{eq.DN map bvp3}
\left\{\begin{array}{rcl}
  \Delta_{X_{M}}\omega &=& 0 \quad \textrm{on} \quad M \\
   i^{*}\omega &=& \theta \quad \textrm{on} \quad \partial M\\
    i^{*}(\delta_{X_{M}}\omega)&=& 0 \quad  \textrm{on} \quad \partial M
\end{array}\right.
\end{equation}
then the \textsc{bvp} (\ref{eq.DN map bvp3}) is solvable
 and the solution is unique up to an arbitrary
Dirichlet $X_M$-harmonic field $ \kappa_D \in
\mathcal{H}^{\pm}_{X_{M},D}(M)$ (Theorem \ref{bvp}). We can
therefore define the $X_M$-DN operator
 $\Lambda_{X_M}:\Omega^{\pm}_{G}(\partial
M)\longrightarrow \Omega^{n-(\mp)}_{G}(\partial M)$ by
$$\Lambda_{X_M} \theta=i^{*}(\star d_{X_M} \omega). $$ Note that taking $d_{X_M} \omega$
eliminates the ambiguity in the choice of the solution $\omega$
which means $\Lambda_{X_M} \theta $
is well defined.
\end{definition}

In the case of $X_M=0$, the definition (\ref{def.DN operator})
reduces to the definition of Belishev and Sharafutdinov's
DN-operator $\Lambda$ \cite{Belishev2}.

The remainder of our results in this section are slightly the
analogues of the results in \cite{Belishev2}.
\begin{lemma}\label{lema.2}
Let $\omega \in \Omega^{\pm}_G(M) $ be a solution to the
\textsc{bvp} (\ref{eq.DN map bvp3}) where $\theta \in
\Omega^{\pm}_G(\partial M)$ is given. Then $d_{X_M}\omega \in
\mathcal{H}^{\mp}_{X_{M}}(M)$ and $\delta_{X_M} \omega=0.$
\end{lemma}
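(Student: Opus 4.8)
The plan is to show that the two Witten-type conditions $\delta_{X_M}\omega=0$ and $d_{X_M}\omega\in\ker d_{X_M}\cap\ker\delta_{X_M}$ both follow from the three conditions defining the \textsc{bvp} (\ref{eq.DN map bvp3}), exactly as in the $X_M=0$ case of \cite{Belishev2}, with Green's formula (\ref{eq.Green's formula}) replacing ordinary integration by parts. First I would establish $\delta_{X_M}\omega=0$. Starting from $\Delta_{X_M}\omega = d_{X_M}\delta_{X_M}\omega + \delta_{X_M}d_{X_M}\omega = 0$, pair this with $\delta_{X_M}\omega$ in the $L^2$-inner product: $0 = \langle d_{X_M}\delta_{X_M}\omega,\delta_{X_M}\omega\rangle + \langle \delta_{X_M}d_{X_M}\omega,\delta_{X_M}\omega\rangle$. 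For the first term apply Green's formula (\ref{eq.Green's formula}) to get $\langle \delta_{X_M}\omega,\delta_{X_M}\omega\rangle + \int_{\partial M} i^*(\delta_{X_M}\omega\wedge\star\delta_{X_M}\omega)$, and this boundary integral vanishes because $i^*(\delta_{X_M}\omega)=0$ is precisely the third condition of (\ref{eq.DN map bvp3}). For the second term, write $\langle \delta_{X_M}d_{X_M}\omega,\delta_{X_M}\omega\rangle = \langle d_{X_M}\omega,\,d_{X_M}\delta_{X_M}\omega\rangle + (\text{boundary term})$, where the boundary term again involves $i^*(\delta_{X_M}\omega)$ and hence vanishes; but $d_{X_M}\delta_{X_M}\omega = \Delta_{X_M}\omega - \delta_{X_M}d_{X_M}\omega = -\delta_{X_M}d_{X_M}\omega$, so this contributes $-\langle d_{X_M}\omega,\delta_{X_M}d_{X_M}\omega\rangle$, and moving it to the other side one is left with $2\langle\delta_{X_M}d_{X_M}\omega,\delta_{X_M}\omega\rangle$-type bookkeeping; more cleanly, I would simply note $0=\langle\Delta_{X_M}\omega,\omega\rangle = \langle d_{X_M}\omega,d_{X_M}\omega\rangle+\langle\delta_{X_M}\omega,\delta_{X_M}\omega\rangle + \int_{\partial M} i^*(\delta_{X_M}\omega\wedge\star\omega) \pm \int_{\partial M} i^*(\omega\wedge\star d_{X_M}\omega)$. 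The care needed here is with the second boundary integral, which does \emph{not} obviously vanish, so the single-pairing-with-$\omega$ route is the delicate one.

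The cleaner route, which I would actually adopt, is the two-step pairing used in \cite{Belishev2}. Pairing $\Delta_{X_M}\omega=0$ with $\delta_{X_M}\omega$ kills one boundary term by the third condition of (\ref{eq.DN map bvp3}) and yields $\|\delta_{X_M}\omega\|^2 + \langle\delta_{X_M}d_{X_M}\omega,\delta_{X_M}\omega\rangle=0$; a further Green's-formula step on the mixed term, using $i^*(\delta_{X_M}\omega)=0$ once more, shows the mixed term equals $\langle d_{X_M}\delta_{X_M}\omega,\,d_{X_M}\omega\rangle$ up to a vanishing boundary integral, and since $d_{X_M}\delta_{X_M}\omega=-\delta_{X_M}d_{X_M}\omega$ one deduces $\langle\delta_{X_M}d_{X_M}\omega,\delta_{X_M}\omega\rangle=0$, whence $\|\delta_{X_M}\omega\|^2=0$ and $\delta_{X_M}\omega=0$. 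Once $\delta_{X_M}\omega=0$ is in hand, the rest is immediate: from $\Delta_{X_M}\omega=0$ we get $\delta_{X_M}d_{X_M}\omega = \Delta_{X_M}\omega - d_{X_M}\delta_{X_M}\omega = 0$, and trivially $d_{X_M}(d_{X_M}\omega)=d_{X_M}^2\omega=0$ (using $d_{X_M}^2=0$ on invariant forms). Therefore $d_{X_M}\omega$ is both $X_M$-closed and $X_M$-coclosed, i.e. $d_{X_M}\omega\in\mathcal{H}^{\mp}_{X_M}(M)$, and $\omega$ being of parity $\pm$ forces $d_{X_M}\omega$ to have the opposite parity $\mp$, matching the claimed membership.

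The main obstacle I anticipate is purely bookkeeping of the boundary terms in Green's formula (\ref{eq.Green's formula}): one must repeatedly check that each boundary integral that arises is of the form $\int_{\partial M} i^*(\cdot\wedge\star\delta_{X_M}\omega)$ or $\int_{\partial M} i^*(\delta_{X_M}\omega\wedge\star\,\cdot\,)$ so that the third \textsc{bvp} condition $i^*(\delta_{X_M}\omega)=0$ (together with Remark \ref{remark1}'s identity $\star(n\alpha)=i^*\star\alpha$) makes it vanish, and that no stray term survives. The inhomogeneity of $d_{X_M}$ in the form-degree is not an obstruction here because (\ref{eq.Green's formula}) is stated for all $\alpha,\beta\in\Omega_G$ regardless of degree; one only has to be attentive to the parity superscripts. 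I expect the argument to occupy half a page and to mirror the corresponding lemma in \cite{Belishev2} almost verbatim.
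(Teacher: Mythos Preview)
Your Green's-formula computation contains a genuine error that breaks the argument. When you pair $\Delta_{X_M}\omega=0$ with $\delta_{X_M}\omega$ and apply Green's formula (\ref{eq.Green's formula}) to the first term $\langle d_{X_M}\delta_{X_M}\omega,\,\delta_{X_M}\omega\rangle$, you do \emph{not} get $\|\delta_{X_M}\omega\|^2$: with $\alpha=\beta=\delta_{X_M}\omega$ the formula yields $\langle\delta_{X_M}\omega,\,\delta_{X_M}^2\omega\rangle + \int_{\partial M}i^*(\delta_{X_M}\omega\wedge\star\delta_{X_M}\omega)=0+0$. So the pairing with $\delta_{X_M}\omega$ produces only the tautology $0=0$ once the boundary terms are killed, and $\|\delta_{X_M}\omega\|^2$ never appears. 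Your subsequent manipulations of the ``mixed term'' are therefore moot: there is nothing left to balance against it. A pure Green's-formula-bookkeeping proof of $\delta_{X_M}\omega=0$ cannot work here, because the only pairing that naturally produces $\|\delta_{X_M}\omega\|^2$ is $\langle\Delta_{X_M}\omega,\omega\rangle$, and that one leaves the nonvanishing boundary term $\int_{\partial M}\theta\wedge\Lambda_{X_M}\theta$ (cf.\ Lemma~\ref{lema.3}).

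The paper proceeds in the opposite order and uses an extra ingredient you are missing. First one shows $d_{X_M}\omega\in\mathcal{H}^{\mp}_{X_M}(M)$: since $d_{X_M}$ commutes with $i^*$ and $\Delta_{X_M}$, the form $d_{X_M}\omega$ satisfies $\Delta_{X_M}(d_{X_M}\omega)=0$, $i^*\bigl(\star d_{X_M}(d_{X_M}\omega)\bigr)=0$ (trivially, as $d_{X_M}^2=0$), and $i^*\bigl(\delta_{X_M}(d_{X_M}\omega)\bigr)=i^*(-d_{X_M}\delta_{X_M}\omega)=-d_{X_M}i^*(\delta_{X_M}\omega)=0$; now a Green's-formula argument applied to $d_{X_M}\omega$ (this is Proposition~3.2(4) of \cite{Our paper}) gives $\|d_{X_M}(d_{X_M}\omega)\|^2+\|\delta_{X_M}(d_{X_M}\omega)\|^2=0$, so $d_{X_M}\omega$ is an $X_M$-harmonic field. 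Second, $\delta_{X_M}d_{X_M}\omega=0$ forces $d_{X_M}\delta_{X_M}\omega=0$, hence $\delta_{X_M}\omega\in\mathcal{H}^{\mp}_{X_M,\mathrm{co}}(M)$; together with $i^*(\delta_{X_M}\omega)=0$ this places $\delta_{X_M}\omega$ in $\mathcal{H}^{\mp}_{X_M,D}(M)\cap\mathcal{H}^{\mp}_{X_M,\mathrm{co}}(M)=\{0\}$ by the $X_M$-Friedrichs decomposition (\ref{F1}). The missing idea in your approach is precisely this appeal to the Friedrichs orthogonality; it cannot be replaced by further boundary bookkeeping.
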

\begin{proof}
Since $\d_{X_M}$ commutes with $ i^{*}$ and $\Delta_{X_{M}} $ then
the \textsc{bvp} (\ref{eq.DN map bvp3}) and $\Lambda_{X_M}
\theta=i^{*}(\star d_{X_M} \omega) $ shows that $ d_{X_M}\omega$
solves the \textsc{bvp}
$$\Delta_{X_{M}}d_{X_M}\omega=0, \quad  i^{*}(\star d^{2}_{X_M} \omega)=0, \quad i^{*}(\delta_{X_M}d_{X_M} \omega)=0.$$
But proposition 3.2(4) of \cite{Our paper} implies that
$d_{X_M}\omega \in \mathcal{H}^{\mp}_{X_{M}}(M)$.

Since $d_{X_M}\omega \in \mathcal{H}^{\mp}_{X_{M}}(M)$, one can
easily verify that
$\d_{X_M}\delta_{X_M}\omega=-\delta_{X_M}\d_{X_M}\omega=0$ and
$\delta^{2}_{X_M} \omega=0$ which means that $\delta_{X_M}\omega
\in\mathcal{H}^{\pm}_{X_{M},\mathrm{co}}(M) $ but the second
condition (i.e. $i^{*}(\delta_{X_{M}}\omega)= 0 $ ) of the
\textsc{bvp} (\ref{eq.DN map bvp3}) gives that $\delta_{X_M} \omega
\in \mathcal{H}^{\pm}_{X_{M},D}(M)  $. Using (\ref{F1}), this then
implies that $\delta_{X_M} \omega \in
\mathcal{H}^{\pm}_{X_{M},D}(M)\cap\mathcal{H}^{\pm}_{X_{M},\mathrm{co}}(M)
=\{0\},$ i.e. $\delta_{X_M} \omega=0.$
\end{proof}

\begin{lemma}\label{lema.3}
The operator $\Lambda_{X_M}$ is nonnegative in the sense that the
integral $$ \int_{\partial M} \theta \wedge \Lambda_{X_M}\theta
$$ is nonnegative for any $\theta\in\Omega^{\pm}_G(\partial M)$.
\end{lemma}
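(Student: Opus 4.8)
The plan is to compute $\int_{\partial M}\theta\wedge\Lambda_{X_M}\theta$ by pulling everything back from the boundary to $M$ via Green's formula (\ref{eq.Green's formula}), and recognize the result as a manifestly nonnegative $L^2$-norm. Let $\omega\in\Omega^{\pm}_G(M)$ be a solution of the \textsc{bvp}~(\ref{eq.DN map bvp3}) with $i^*\omega=\theta$. By definition $\Lambda_{X_M}\theta=i^*(\star\,\d_{X_M}\omega)$, so using $i^*\omega=\theta$ and the fact that $i^*$ commutes with wedge,
$$
\int_{\partial M}\theta\wedge\Lambda_{X_M}\theta
=\int_{\partial M}i^*\omega\wedge i^*(\star\,\d_{X_M}\omega)
=\int_{\partial M}i^*\!\left(\omega\wedge\star\,\d_{X_M}\omega\right).
$$
Now apply Green's formula (\ref{eq.Green's formula}) with $\alpha=\omega$ and $\beta=\d_{X_M}\omega$ (both lie in $\Omega_G$ since $\d_{X_M}:\Omega^{\pm}_G\to\Omega^{\mp}_G$), which gives
$$
\int_{\partial M}i^*(\omega\wedge\star\,\d_{X_M}\omega)
=\langle\d_{X_M}\omega,\,\d_{X_M}\omega\rangle-\langle\omega,\,\delta_{X_M}\d_{X_M}\omega\rangle .
$$

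The second term vanishes: by Lemma~\ref{lema.2}, $\d_{X_M}\omega\in\mathcal{H}^{\mp}_{X_M}(M)$, so in particular $\delta_{X_M}\d_{X_M}\omega=0$ (an $X_M$-harmonic field is $\delta_{X_M}$-closed). Hence
$$
\int_{\partial M}\theta\wedge\Lambda_{X_M}\theta=\langle\d_{X_M}\omega,\,\d_{X_M}\omega\rangle=\|\d_{X_M}\omega\|^2\geq 0,
$$
which is the claim. One should remark that the right-hand side is independent of the choice of solution $\omega$ (it changes by an element of $\mathcal{H}^{\pm}_{X_M,D}(M)$, which is $\d_{X_M}$-closed), consistent with $\Lambda_{X_M}\theta$ being well defined.

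The only genuine point requiring care — the \textbf{main obstacle}, such as it is — is justifying that Green's formula applies and that the pairings are finite: this is immediate here because, by Theorem~\ref{bvp}, the solution $\omega$ may be taken smooth on $M$ (and $M$ is compact), so $\d_{X_M}\omega$ is smooth and all $L^2$-inner products over $M$ are finite; there are no convergence issues. A secondary bookkeeping point is the parity/degree matching needed for the wedge $\theta\wedge\Lambda_{X_M}\theta$ to be a top-degree form on $\partial M$: with $\theta\in\Omega^{\pm}_G(\partial M)$ and $\Lambda_{X_M}\theta\in\Omega^{(n-1)-(\mp)}_G(\partial M)$ the total degree on the $(n-1)$-dimensional boundary is $(n-1)$ in each homogeneous piece, so the integral makes sense; this is the same degree accounting already used in Definition~\ref{def.DN operator}, so nothing new is needed.
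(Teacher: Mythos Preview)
Your proof is correct. It differs slightly from the paper's: the paper starts from $0=\langle\Delta_{X_M}\omega,\omega\rangle$ and expands both terms of $\Delta_{X_M}=\d_{X_M}\delta_{X_M}+\delta_{X_M}\d_{X_M}$ via Green's formula (the boundary term from $\d_{X_M}\delta_{X_M}$ vanishes by $i^*\delta_{X_M}\omega=0$), obtaining
\[
\int_{\partial M}\theta\wedge\Lambda_{X_M}\theta=\|\d_{X_M}\omega\|^2+\|\delta_{X_M}\omega\|^2,
\]
without invoking Lemma~\ref{lema.2}. You instead apply Green's formula once with $\beta=\d_{X_M}\omega$ and then quote Lemma~\ref{lema.2} to kill $\delta_{X_M}\d_{X_M}\omega$, getting only $\|\d_{X_M}\omega\|^2$. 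The two formulas agree since Lemma~\ref{lema.2} also gives $\delta_{X_M}\omega=0$. The paper's version has the mild advantage of being self-contained and of producing the two-term identity (\ref{eq.nonnegative}), which is reused in the proof of Lemma~\ref{lema.4} to conclude that $\theta\in\ker\Lambda_{X_M}$ forces both $\d_{X_M}\lambda=0$ and $\delta_{X_M}\lambda=0$ simultaneously; your route is marginally shorter but leans on the prior lemma.
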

\begin{proof}
For given $\theta$, let $\omega \in \Omega^{\pm}_G(M) $ be a
solution to the \textsc{bvp} (\ref{eq.DN map bvp3}). Then it follows
from (\ref{eq.Green's formula}) that
$$0=\left<\Delta_{X_M}\omega,\,\omega\right> =
\left<\d_{X_M}\omega,\,\d_{X_M}\omega\right>
+\left<\delta_{X_M}\omega,\,\delta_{X_M}\omega\right>
 - \int_{\partial
M}i^{*}\omega \wedge i^{*} (\star \d_{X_M}\omega) $$ whence
\begin{equation}\label{eq.nonnegative}
    \int_{\partial M} \theta \wedge
\Lambda_{X_M}\theta = \|\d_{X_M}\omega\|^2 +\|\delta_{X_M}\omega\|^2
\geq 0.
\end{equation}
\end{proof}

\begin{lemma}\label{lema.4}
$$\ker \Lambda_{X_M}=\Ran \Lambda_{X_M}=
i^{*}\mathcal{H}_{X_{M}}(M)$$ where
$\mathcal{H}_{X_{M}}=\mathcal{H}^{+}_{X_{M}}\oplus\mathcal{H}^{-}_{X_{M}}$
\end{lemma}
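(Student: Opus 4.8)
**Proof plan for Lemma 3.8 (the identity $\ker\Lambda_{X_M}=\Ran\Lambda_{X_M}=i^*\mathcal{H}_{X_M}(M)$).**

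The plan is to establish the three-way equality by a chain of inclusions, exploiting the characterizations already in hand: the nonnegativity identity \eqref{eq.nonnegative}, Lemma~\ref{lema.2}, and Lemma~\ref{lema.1}. First I would prove $\ker\Lambda_{X_M}\subseteq i^*\mathcal{H}_{X_M}(M)$. Suppose $\Lambda_{X_M}\theta=0$ and let $\omega$ solve \textsc{bvp}~\eqref{eq.DN map bvp3}. Then \eqref{eq.nonnegative} gives $\|\d_{X_M}\omega\|^2+\|\delta_{X_M}\omega\|^2=\int_{\partial M}\theta\wedge\Lambda_{X_M}\theta=0$, so $\d_{X_M}\omega=\delta_{X_M}\omega=0$; thus $\omega\in\mathcal{H}^{\pm}_{X_M}(M)$ and $\theta=i^*\omega\in i^*\mathcal{H}^{\pm}_{X_M}(M)$. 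Conversely, if $\theta=i^*\lambda$ with $\lambda\in\mathcal{H}^{\pm}_{X_M}(M)$, then $\lambda$ itself solves \eqref{eq.DN map bvp3} (it is $X_M$-harmonic, hence $\Delta_{X_M}\lambda=0$, $\d_{X_M}\lambda=0$ forces $i^*(\star\d_{X_M}\lambda)=0$ trivially, and $i^*(\delta_{X_M}\lambda)=0$), so we may take $\omega=\lambda$ in the definition of $\Lambda_{X_M}$ and obtain $\Lambda_{X_M}\theta=i^*(\star\d_{X_M}\lambda)=0$. This proves $\ker\Lambda_{X_M}=i^*\mathcal{H}_{X_M}(M)$ (summing over both parities).

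Next I would treat the range. For $\Ran\Lambda_{X_M}\subseteq i^*\mathcal{H}_{X_M}(M)$: given any $\theta$, let $\omega$ solve \eqref{eq.DN map bvp3}. By Lemma~\ref{lema.2}, $\d_{X_M}\omega\in\mathcal{H}^{\mp}_{X_M}(M)$, and $\Lambda_{X_M}\theta=i^*(\star\d_{X_M}\omega)=i^*\star(\d_{X_M}\omega)$. Using Remark~\ref{remark1}, $i^*\star$ of an $X_M$-harmonic field is the tangential part of the Hodge dual; more directly, $\star$ maps $\mathcal{H}^{\mp}_{X_M}(M)$ isomorphically onto $\mathcal{H}^{n-\mp}_{X_M}(M)$ (the Hodge star intertwines $\d_{X_M}$ and $\pm\delta_{X_M}$ on invariant forms), so $\star\d_{X_M}\omega\in\mathcal{H}^{n-\mp}_{X_M}(M)$ and hence $\Lambda_{X_M}\theta=i^*(\star\d_{X_M}\omega)\in i^*\mathcal{H}^{n-\mp}_{X_M}(M)\subseteq i^*\mathcal{H}_{X_M}(M)$.

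Finally, for the reverse inclusion $i^*\mathcal{H}_{X_M}(M)\subseteq\Ran\Lambda_{X_M}$: I would take $\mu\in i^*\mathcal{H}^{\pm}_{X_M}(M)$, say $\mu=i^*\lambda$ with $\lambda\in\mathcal{H}^{\pm}_{X_M}(M)$, and produce $\theta$ with $\Lambda_{X_M}\theta=\mu$. The natural candidate is to write $\star\lambda\in\mathcal{H}^{n-\pm}_{X_M}(M)$ and, using the $X_M$-Friedrichs decomposition \eqref{F2}, split it as $\star\lambda=\d_{X_M}\sigma+\lambda_N$; one then seeks $\omega$ with $\d_{X_M}\omega$ equal (up to the harmonic ambiguity) to $\star^{-1}$ of a suitable piece. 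This is where Lemma~\ref{lema.1} does the work: it says $i^*\mathcal{H}^{\pm}_{X_M}(M)=\mathcal{E}^{\pm}_{X_M}(\partial M)+i^*\mathcal{H}^{\pm}_{X_M,N}(M)$, and I would use this to reduce to showing both summands lie in $\Ran\Lambda_{X_M}$. For the exact part $\d_{X_M}\alpha$ one builds $\omega$ by solving an auxiliary \textsc{bvp} via Theorem~\ref{thm.2.7}; for the Neumann-trace part one uses that a Neumann $X_M$-harmonic field $\lambda_N$, being itself a solution of \eqref{eq.DN map bvp3} with $\d_{X_M}\lambda_N=0$, contributes $0$ to $\Lambda_{X_M}$, while its Hodge dual (a Dirichlet field) furnishes the required boundary datum — here one mirrors Belishev–Sharafutdinov's argument that $\Ran\Lambda$ equals the trace space of harmonic fields. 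The main obstacle will be this last inclusion: carefully choosing the boundary datum $\theta$ and the corresponding solution $\omega$ of \eqref{eq.DN map bvp3} so that $i^*(\star\d_{X_M}\omega)$ hits a prescribed element of $i^*\mathcal{H}_{X_M}(M)$, keeping track of the parity shift $\pm\mapsto n-\mp$ and of the harmonic-field ambiguity in $\omega$. Once both inclusions for the range are in place, combining with the kernel computation gives $\ker\Lambda_{X_M}=\Ran\Lambda_{X_M}=i^*\mathcal{H}_{X_M}(M)$.
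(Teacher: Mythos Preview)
Your treatment of $\ker\Lambda_{X_M}=i^*\mathcal{H}_{X_M}(M)$ and of $\Ran\Lambda_{X_M}\subseteq i^*\mathcal{H}_{X_M}(M)$ is correct and matches the paper's proof essentially verbatim: the nonnegativity identity \eqref{eq.nonnegative} for the kernel, and Lemma~\ref{lema.2} together with $\star:\mathcal{H}_{X_M}^{\mp}\to\mathcal{H}_{X_M}^{n-\mp}$ for the range inclusion.

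The gap is in the reverse inclusion $i^*\mathcal{H}_{X_M}(M)\subseteq\Ran\Lambda_{X_M}$. You correctly start by applying the $X_M$-Friedrichs decomposition \eqref{F2} to $\star\lambda$, writing $\star\lambda=\d_{X_M}\sigma+\lambda_N$. But then you abandon this in favour of a detour through Lemma~\ref{lema.1}, and that detour, as you describe it, does not work. In particular, the claim that for $\lambda_N\in\mathcal{H}^{\pm}_{X_M,N}(M)$ ``its Hodge dual (a Dirichlet field) furnishes the required boundary datum'' is wrong: $\star\lambda_N$ is Dirichlet, so $i^*(\star\lambda_N)=0$, which gives the trivial datum and $\Lambda_{X_M}(0)=0$, not $i^*\lambda_N$. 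The exact-part argument via Theorem~\ref{thm.2.7} is also only gestured at and would in any case lead you back to the same issue.

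What you are missing is that the Friedrichs decomposition itself already finishes the job, once you invoke Remark~\ref{remark2}: the potential $\sigma$ in $\star\lambda=\d_{X_M}\sigma+\lambda_N$ can be chosen with $\delta_{X_M}\sigma=0$ and $\Delta_{X_M}\sigma=0$. This makes $\sigma$ a solution of the \textsc{bvp}~\eqref{eq.DN map bvp3} with boundary datum $i^*\sigma$, so by definition $\Lambda_{X_M}(i^*\sigma)=i^*(\star\d_{X_M}\sigma)$. Since $\d_{X_M}\sigma=\star\lambda-\lambda_N$ and $i^*\star\lambda_N=0$ (Neumann condition), one gets $\Lambda_{X_M}(i^*\sigma)=i^*(\star\star\lambda)=\pm\, i^*\lambda$, hence $i^*\lambda=\Lambda_{X_M}(\pm\, i^*\sigma)\in\Ran\Lambda_{X_M}$. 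Lemma~\ref{lema.1} plays no role here; the entire content of the surjectivity is Remark~\ref{remark2}.
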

\begin{proof}
We first prove that $\ker
\Lambda_{X_M}=i^{*}\mathcal{H}_{X_{M}}(M).$ If $\theta=i^{*}\lambda
\in i^{*}\mathcal{H}_{X_{M}}(M)$ for $\lambda \in
\mathcal{H}_{X_{M}}(M) $, then $\lambda$ is a solution to the
\textsc{bvp} (\ref{eq.DN map bvp3}). But $d_{X_M}
\lambda=\delta_{X_M}\lambda=0,$ therefore
$\Lambda_{X_M}\theta=i^{*}(\star d_{X_M} \lambda)=0$. Conversely, if
$\theta \in \ker \Lambda_{X_M}$ and $\lambda$ is a solution to the
\textsc{bvp} (\ref{eq.DN map bvp3}) then $\theta=i^{*}\lambda$ and
equation (\ref{eq.nonnegative}) implies that $d_{X_M}
\lambda=\delta_{X_M}\lambda=0.$ i.e. $\theta=i^{*}\lambda\in
i^{*}\mathcal{H}_{X_{M}}(M).$ Hence, $\ker
\Lambda_{X_M}=i^{*}\mathcal{H}_{X_{M}}(M).$

Now, to prove $ \Ran \Lambda_{X_M}= i^{*}\mathcal{H}_{X_{M}}(M),$
suppose $\phi \in \Ran \Lambda_{X_M}$ then
$\phi=\Lambda_{X_M}\theta$ where $\theta=i^{*}\lambda$ such that
$\lambda$ is a solution of the \textsc{bvp} (\ref{eq.DN map bvp3}).
But, $d_{X_M}\lambda\in \mathcal{H}_{X_{M}}(M)$ (Lemma \ref{lema.2})
then $\star d_{X_M}\lambda \in \mathcal{H}_{X_{M}}(M)$ too. Hence,
$\phi=\Lambda_{X_M}\theta=i^{*}(\star d_{X_M} \lambda) \in
i^{*}\mathcal{H}_{X_{M}}(M).$ Conversely, let $\phi=i^{*}\lambda \in
i^{*}\mathcal{H}_{X_{M}}(M),$ i.e. $\lambda\in
\mathcal{H}_{X_{M}}(M)$.  Applying, the $X_M$-Friedrichs
Decomposition Theorem (\ref{F2}), we can decompose $\star\lambda$ as
\begin{equation}\label{eq.3.7}
\star\lambda=\d_{X_M} \omega+\lambda_{N} \in
\mathcal{H}_{X_{M},N}(M)\oplus\mathcal{H}_{X_{M},\mathrm{ex}}(M).
\end{equation}
Remark \ref{remark2} asserts that $\omega$ can be chosen such that
$$\Delta_{X_M}\omega=0,\quad   \delta_{X_M}\omega=0$$ which implies that
$$\Lambda_{X_M}i^{*}\omega=i^{*}(\star d_{X_M} \omega).$$ We can
obtain from eq. (\ref{eq.3.7}) that $$ i^{*}(\star d_{X_M}
\omega)=\pm i^{*}\lambda.$$ Comparing the last two equation with $
\phi=i^{*}\lambda $, we obtain $\phi=\Lambda_{X_M}(\pm i^{*}\omega)
\in \Ran\Lambda_{X_M}. $

\end{proof}

\begin{corollary}
The operator $\Lambda_{X_M}$ satisfies the following relations:
\begin{equation}\label{eq.3.6}
\Lambda_{X_M} \d_{X_M}=0, \quad  \d_{X_M}\Lambda_{X_M}=0, \quad
\Lambda^{2}_{X_M}=0.
\end{equation}
\end{corollary}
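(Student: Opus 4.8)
The plan is to derive all three identities in (\ref{eq.3.6}) directly from Lemma~\ref{lema.4}, which already identifies both $\ker\Lambda_{X_M}$ and $\Ran\Lambda_{X_M}$ with the trace space $i^*\mathcal{H}_{X_M}(M)$. The first relation, $\Lambda_{X_M}\d_{X_M}=0$, should follow by noting that for any $\alpha\in\Omega^{\mp}_G(\partial M)$ the form $\d_{X_M}\alpha$ lies in $\mathcal{E}^{\pm}_{X_M}(\partial M)$, and by Lemma~\ref{lema.1} this is contained in $i^*\mathcal{H}^{\pm}_{X_M}(M)=\ker\Lambda_{X_M}$; hence $\Lambda_{X_M}\d_{X_M}\alpha=0$. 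Alternatively, and perhaps more transparently, one can argue directly: if $\omega$ solves the \textsc{bvp}~(\ref{eq.DN map bvp3}) with boundary datum $i^*\omega=\d_{X_M}\alpha$, then $\omega-\alpha'$ (for a suitable extension $\alpha'$ of $\alpha$) can be arranged so that $\d_{X_M}\omega$ vanishes, but the cleanest route is via $\ker\Lambda_{X_M}$.

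For the second relation, $\d_{X_M}\Lambda_{X_M}=0$, I would use that $\Ran\Lambda_{X_M}=i^*\mathcal{H}_{X_M}(M)$: any element of the range is $i^*\lambda$ for some $X_M$-harmonic field $\lambda$, and since $\d_{X_M}$ commutes with $i^*$ we get $\d_{X_M}i^*\lambda=i^*\d_{X_M}\lambda=i^*0=0$ because $\lambda$ being an $X_M$-harmonic field satisfies $\d_{X_M}\lambda=0$. Thus $\d_{X_M}$ annihilates everything in the range of $\Lambda_{X_M}$.

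The third relation, $\Lambda^2_{X_M}=0$, is then immediate: $\Ran\Lambda_{X_M}=i^*\mathcal{H}_{X_M}(M)=\ker\Lambda_{X_M}$ by Lemma~\ref{lema.4}, so applying $\Lambda_{X_M}$ a second time to anything in its range gives zero. I do not anticipate a serious obstacle here; the entire corollary is a formal consequence of Lemma~\ref{lema.4} together with the commutation of $\d_{X_M}$ with $i^*$ and the defining property of $X_M$-harmonic fields. The only point requiring a moment's care is the first identity, where one must be sure that $\d_{X_M}\alpha$ for $\alpha$ a form on $\partial M$ really does land in the trace space $i^*\mathcal{H}^{\pm}_{X_M}(M)$ — but this is exactly the content of Lemma~\ref{lema.1} (the inclusion $\mathcal{E}^{\pm}_{X_M}(\partial M)\subseteq i^*\mathcal{H}^{\pm}_{X_M}(M)$), so no new work is needed.
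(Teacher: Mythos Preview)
Your proposal is correct and follows essentially the same approach as the paper: the first identity comes from the inclusion $\mathcal{E}_{X_M}(\partial M)\subseteq i^{*}\mathcal{H}_{X_M}(M)=\ker\Lambda_{X_M}$ (Lemmas~\ref{lema.1} and~\ref{lema.4}), and the second and third identities are formal consequences of the equality $\Ran\Lambda_{X_M}=\ker\Lambda_{X_M}=i^{*}\mathcal{H}_{X_M}(M)$ from Lemma~\ref{lema.4}. Your write-up simply makes explicit the one-line deductions the paper leaves to the reader.
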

\begin{proof}
The first relation of (\ref{eq.3.6}) means that any form in the
space $\mathcal{E}_{X_M}(\partial M)$ is the trace of an
$X_M$-harmonic field which is true by $\mathcal{E}_{X_M}(\partial M)
\subseteq i^{*}\mathcal{H}_{X_{M}}(M)=\ker \Lambda_{X_M}$ (Lemmas
\ref{lema.1} and \ref{lema.4}) while the second and third of
equalities (\ref{eq.3.6}) follow from Lemma \ref{lema.4}.
\end{proof}

\begin{corollary}\label{coro. Hilbert trans.}
The operator
$\d_{X_M}\Lambda^{-1}_{X_M}:i^{*}\mathcal{H}_{X_{M}}(M)\longrightarrow
i^{*}\mathcal{H}_{X_{M}}(M)$ is well-defined, i.e. the equation
$\phi=\Lambda_{X_M}\theta$ has a solution $\theta$ for any $\phi \in
i^{*}\mathcal{H}_{X_{M}}(M)$, and $\d_{X_M}\theta$ is uniquely
determined by $\phi=\Lambda_{X_M}\theta.$ In particular, the
operator $\d_{X_M}\Lambda^{-1}_{X_M}\d_{X_M}:\Omega_{G}(\partial
M)\longrightarrow \Omega_{G}(\partial M) $ is well-defined.
\end{corollary}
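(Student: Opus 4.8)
The plan is to prove Corollary~\ref{coro. Hilbert trans.} as a direct consequence of Lemma~\ref{lema.4}, together with the relations $\Lambda_{X_M}\d_{X_M}=0$, $\d_{X_M}\Lambda_{X_M}=0$, $\Lambda^2_{X_M}=0$ recorded in the previous corollary. The whole point is that although $\Lambda_{X_M}$ is not injective, the composite $\d_{X_M}\circ\Lambda_{X_M}^{-1}$ is single-valued, because the ambiguity in the choice of a preimage $\theta$ lies precisely in the kernel of $\d_{X_M}$ restricted to the relevant space.

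First I would establish solvability: given $\phi\in i^{*}\mathcal{H}_{X_{M}}(M)$, Lemma~\ref{lema.4} says $\Ran\Lambda_{X_M}=i^{*}\mathcal{H}_{X_{M}}(M)$, so there exists $\theta\in\Omega^{\pm}_G(\partial M)$ with $\Lambda_{X_M}\theta=\phi$; moreover $\phi=\Lambda_{X_M}\theta\in\Ran\Lambda_{X_M}=\ker\Lambda_{X_M}$ (again Lemma~\ref{lema.4}), which confirms that $\d_{X_M}\Lambda^{-1}_{X_M}$ indeed maps into a space on which $\Lambda_{X_M}$ makes sense, namely back into $i^{*}\mathcal{H}_{X_{M}}(M)$ after one checks $\d_{X_M}\theta$ lands there. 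Next I would prove well-definedness: if $\theta_1,\theta_2$ are two preimages of $\phi$, then $\theta_1-\theta_2\in\ker\Lambda_{X_M}=i^{*}\mathcal{H}_{X_{M}}(M)$, so $\theta_1-\theta_2=i^{*}\mu$ for some $\mu\in\mathcal{H}_{X_{M}}(M)$, and since $\d_{X_M}\mu=0$ and $\d_{X_M}$ commutes with $i^{*}$, we get $\d_{X_M}(\theta_1-\theta_2)=\d_{X_M}i^{*}\mu=i^{*}\d_{X_M}\mu=0$. Hence $\d_{X_M}\theta_1=\d_{X_M}\theta_2$, so $\d_{X_M}\Lambda^{-1}_{X_M}\phi:=\d_{X_M}\theta$ is unambiguous. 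I should also check the target: $\d_{X_M}\theta=\d_{X_M}i^{*}\omega=i^{*}\d_{X_M}\omega$ where $\omega$ solves the \textsc{bvp} (\ref{eq.DN map bvp3}) with boundary value $\theta$, and Lemma~\ref{lema.2} gives $\d_{X_M}\omega\in\mathcal{H}^{\mp}_{X_{M}}(M)$, so $\d_{X_M}\theta\in i^{*}\mathcal{H}_{X_{M}}(M)$ as claimed.

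For the final assertion about $\d_{X_M}\Lambda^{-1}_{X_M}\d_{X_M}:\Omega_{G}(\partial M)\to\Omega_{G}(\partial M)$, I would note that for arbitrary $\psi\in\Omega_{G}(\partial M)$, $\d_{X_M}\psi\in\mathcal{E}_{X_M}(\partial M)\subseteq i^{*}\mathcal{H}_{X_{M}}(M)$ by Lemma~\ref{lema.1}, so $\d_{X_M}\psi$ lies in the domain where $\Lambda^{-1}_{X_M}$ followed by $\d_{X_M}$ has just been shown to be well-defined; composing gives a well-defined operator on all of $\Omega_{G}(\partial M)$ landing in $i^{*}\mathcal{H}_{X_{M}}(M)\subseteq\Omega_{G}(\partial M)$.

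I do not anticipate a genuine obstacle here: the corollary is essentially bookkeeping on top of Lemmas~\ref{lema.1}, \ref{lema.2}, \ref{lema.4}. The one point requiring a little care is the parity/degree tracking (the $\pm$ versus $n-\mp$ shifts) and making sure that "$\d_{X_M}\theta$ is determined by $\phi$" is read correctly as the statement that the value $\d_{X_M}\theta$ — not $\theta$ itself — is independent of the chosen preimage; this is exactly what the kernel computation above delivers.
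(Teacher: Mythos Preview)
Your proposal is correct and follows essentially the same approach as the paper: solvability via $\Ran\Lambda_{X_M}=i^{*}\mathcal{H}_{X_{M}}(M)$ from Lemma~\ref{lema.4}, uniqueness of $\d_{X_M}\theta$ via $\ker\Lambda_{X_M}=i^{*}\mathcal{H}_{X_{M}}(M)$, and well-definedness of $\d_{X_M}\Lambda^{-1}_{X_M}\d_{X_M}$ via $\mathcal{E}_{X_M}(\partial M)\subseteq i^{*}\mathcal{H}_{X_{M}}(M)$ from Lemma~\ref{lema.1}. Your additional verification of the target space using Lemma~\ref{lema.2} is a nice touch that the paper leaves implicit.
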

\begin{proof}
Lemma \ref{lema.4}  proves that $\Ran \Lambda_{X_M}=
i^{*}\mathcal{H}_{X_{M}}(M)$. Hence, if $\phi \in
i^{*}\mathcal{H}_{X_{M}}(M)$ then the equation
$\phi=\Lambda_{X_M}\theta$ is solvable. If
$\Lambda_{X_M}\theta_1=\Lambda_{X_M}\theta_2 $ then
$\theta_1-\theta_2 \in \ker\Lambda_{X_M}$ is $X_M$-closed (i.e.
$\d_{X_M}(\theta_1-\theta_2)=0$) because
$\ker\Lambda_{X_M}=i^{*}\mathcal{H}_{X_{M}}(M) $. Thus,
$\d_{X_M}\theta_1=\d_{X_M}\theta_2$ which means that
$\d_{X_M}\theta$ is uniquely determined by
$\phi=\Lambda_{X_M}\theta.$ Clearly, the operator
$\d_{X_M}\Lambda^{-1}_{X_M}\d_{X_M}$ is well-defined because we have
shown in lemma  \ref{lema.1}  that $\mathcal{E}_{X_M}(\partial
 M) \subseteq i^{*}\mathcal{H}_{X_{M}}(M)$.
\end{proof}

\section{$\Lambda_{X_M}$ operator, $X_M$-cohomology and equivariant cohomology}
In the following theorem which is the analogues of theorem \ref{B-sh
them2 }, we relate the $\dim (H^{\pm}_{X_M}(M))$ with the kernel of
$\Lambda_{X_M}$ as follows:
\begin{theorem}\label{thm.low bounds}
Let $\Lambda^{\pm}_{X_M}$ be the restriction of $X_M$-DN operator to
$\Omega^{\pm}_{G}(\partial M)$. Then
$\mathcal{E}^{\pm}_{X_M}(\partial
 M) \subseteq \ker \Lambda^{\pm}_{X_M}$ and $$ \dim [{\ker\Lambda^{\pm}_{X_M}}/{\mathcal{E}^{\pm}_{X_M}(\partial
 M)}]\leq \min\{\dim (H^{\pm}_{X_M}(\partial M)),\dim (H^{\pm}_{X_M}(M))\}$$

\end{theorem}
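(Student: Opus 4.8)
The plan is to mirror the proof of Belishev--Sharafutdinov's Theorem~\ref{B-sh them2 }, working throughout with $\d_{X_M}$ in place of $\d$. The inclusion $\mathcal{E}^{\pm}_{X_M}(\partial M)\subseteq\ker\Lambda^{\pm}_{X_M}$ is immediate from the first relation of (\ref{eq.3.6}), or alternatively from the chain $\mathcal{E}^{\pm}_{X_M}(\partial M)\subseteq i^{*}\mathcal{H}^{\pm}_{X_{M}}(M)=\ker\Lambda^{\pm}_{X_M}$ supplied by Lemmas~\ref{lema.1} and~\ref{lema.4}. So the substance is the dimension estimate. By Lemma~\ref{lema.4}, $\ker\Lambda^{\pm}_{X_M}=i^{*}\mathcal{H}^{\pm}_{X_{M}}(M)$, and by Lemma~\ref{lema.1}, $i^{*}\mathcal{H}^{\pm}_{X_{M}}(M)=\mathcal{E}^{\pm}_{X_M}(\partial M)+i^{*}\mathcal{H}^{\pm}_{X_{M},N}(M)$. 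Hence the quotient $\ker\Lambda^{\pm}_{X_M}/\mathcal{E}^{\pm}_{X_M}(\partial M)$ is a quotient of $i^{*}\mathcal{H}^{\pm}_{X_{M},N}(M)$, which by Corollary~\ref{coro.2.7} has dimension $\dim H^{\pm}_{X_M}(M)$. This already gives the bound by $\dim H^{\pm}_{X_M}(M)$.

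For the other half of the minimum, I would identify $\ker\Lambda^{\pm}_{X_M}/\mathcal{E}^{\pm}_{X_M}(\partial M)$ with a subspace of $H^{\pm}_{X_M}(\partial M)$. Since $\ker\Lambda^{\pm}_{X_M}=i^{*}\mathcal{H}^{\pm}_{X_{M}}(M)$ consists of forms of the shape $i^*\lambda$ with $\lambda$ an $X_M$-harmonic field on $M$, each such form is $\d_{X_M}$-closed on $\partial M$ (because $\d_{X_M}$ commutes with $i^*$ and $\d_{X_M}\lambda=0$); thus there is a natural map
\begin{equation}\label{eq.quotient.map}
\ker\Lambda^{\pm}_{X_M}\big/\mathcal{E}^{\pm}_{X_M}(\partial M)\longrightarrow H^{\pm}_{X_M}(\partial M),\qquad [i^*\lambda]\longmapsto [i^*\lambda].
\end{equation}
This map is well-defined precisely because $\mathcal{E}^{\pm}_{X_M}(\partial M)$ is the space of $\d_{X_M}$-exact forms on $\partial M$, so it is quotiented out on both sides. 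It is injective by construction: if $i^*\lambda$ is $\d_{X_M}$-exact on $\partial M$ then it already lies in $\mathcal{E}^{\pm}_{X_M}(\partial M)$, i.e.\ it is zero in the source. Therefore $\dim[\ker\Lambda^{\pm}_{X_M}/\mathcal{E}^{\pm}_{X_M}(\partial M)]\leq\dim H^{\pm}_{X_M}(\partial M)$, and combining the two bounds yields the minimum.

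The step I expect to require the most care is the identification in (\ref{eq.quotient.map}): one must check that the target $H^{\pm}_{X_M}(\partial M)$ — the $\d_{X_M}$-cohomology of the closed manifold $\partial M$ — is the right receptacle, that the vector field $X_M$ restricts sensibly to $\partial M$ (it is tangent to $\partial M$ since $G$ acts by isometries preserving $\partial M$, so $X_M|_{\partial M}$ is a well-defined invariant vector field on $\partial M$ and $\d_{X_M}$ makes sense there), and that $\mathcal{E}^{\pm}_{X_M}(\partial M)$ as defined in Lemma~\ref{lema.1} coincides with the coboundaries of that complex. Once the bookkeeping of parities ($\pm$ versus $n-\pm$) and the tangency of $X_M$ are in place, the argument is a formal diagram chase from Lemmas~\ref{lema.1}, \ref{lema.4} and Corollary~\ref{coro.2.7}, exactly parallel to the $X_M=0$ case in \cite{Belishev2}.
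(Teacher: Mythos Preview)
Your proposal is correct and follows essentially the same approach as the paper's proof: both use Lemmas~\ref{lema.1} and~\ref{lema.4} together with Corollary~\ref{coro.2.7} for the bound by $\dim H^{\pm}_{X_M}(M)$, and both observe that $\ker\Lambda^{\pm}_{X_M}$ consists of $\d_{X_M}$-closed forms on $\partial M$ for the bound by $\dim H^{\pm}_{X_M}(\partial M)$. The only cosmetic difference is that the paper phrases the second bound via the $X_M$-Hodge--Morrey decomposition on the closed manifold $\partial M$, writing $\ker\Lambda^{\pm}_{X_M}\subset\mathcal{H}_{X_M}^{\pm}(\partial M)\oplus\mathcal{E}^{\pm}_{X_M}(\partial M)$, whereas you obtain it by the direct injection~(\ref{eq.quotient.map}) into the cohomology group---these are the same statement read through the Hodge isomorphism $\mathcal{H}_{X_M}^{\pm}(\partial M)\cong H^{\pm}_{X_M}(\partial M)$.
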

\begin{proof}
We can apply the $X_M$-Hodge-Morrey decomposition theorem
(\ref{X_M.H.M}) (or theorem 2.5 of \cite{Our paper}) for $\partial
M$ which asserts that the direct sum of the first and third
subspaces is equal to the subspace of all $X_M$-closed invariant
differential $\pm$-forms (that is, $\ker\d_{X_M}$). Hence, this fact
together with eq.(\ref{eq.3.6}) implies that
$$\mathcal{E}^{\pm}_{X_M}(\partial
 M) \subset \ker\Lambda^{\pm}_{X_M}\subset \mathcal{H}_{X_M}^\pm (\partial M)
\oplus \mathcal{E}^{\pm}_{X_M}(\partial
 M).$$ This implies $$\dim [{\ker\Lambda^{\pm}_{X_M}}/{\mathcal{E}^{\pm}_{X_M}(\partial
 M)}]\leq \dim \mathcal{H}_{X_M}^\pm (\partial M)=\dim (H^{\pm}_{X_M}(\partial M)).
 $$ By Lemmas \ref{lema.1} and \ref{lema.4}, $$ \ker\Lambda^{\pm}_{X_M}= \mathcal{E}^{\pm}_{X_M}(\partial
 M)+i^{*}\mathcal{H}^{\pm}_{X_{M},N}(M).$$ Thus, $$\dim [{\ker\Lambda^{\pm}_{X_M}}/{\mathcal{E}^{\pm}_{X_M}(\partial
 M)}]\leq \dim (i^*\mathcal{H}^{\pm}_{X_{M},N}(M))=\dim (H^{\pm}_{X_M}(M)). $$
 Therefore $$ \dim [{\ker\Lambda^{\pm}_{X_M}}/{\mathcal{E}^{\pm}_{X_M}(\partial
 M)}]\leq \min \{\dim (H^{\pm}_{X_M}(\partial M)),\dim (H^{\pm}_{X_M}(M)) \}$$ as
 required.
\end{proof}

In particular, corollary 4.4 of \cite{Our paper} asserts that if the
set of zeros $N(X_M)$ of the corresponding vector field $X_M$ is
equal to the fixed point set $F$ for the $G$-action (i.e.
$N(X_M)=F$) then $\dim (H^{\pm}_{X_M}(M))=\rank H_G^\pm(M) $ and $
\dim (H^{\pm}_{X_M}(M,\partial M))=\rank H_G^\pm(M,\partial M)$
where $H_G^\pm(M) $ and $H_G^\pm(M,\partial M)$ are absolute and
relative equivariant cohomology respectively. The
$X_{M}$-Poincar\'e-Lefschetz duality (Theorem 3.16(c) of \cite{Our
paper}) asserts that $\rank H_G^\pm(M)= \rank
H_G^{n-(\pm)}(M,\partial M)$. Hence, we conclude the following
corollary which relates the kernel of $\Lambda_{X_M}$ with the rank
of the absolute and relative equivariant cohomology. In fact, we can
write down some lower bounds for that rank:

\begin{corollary}\label{coro.equivariant cohomology}
If $N(X_M)=F$ then we have $$ \dim
[{\ker\Lambda^{\pm}_{X_M}}/{\mathcal{E}^{\pm}_{X_M}(\partial
 M)}]\leq min\{\rank H_G^\pm(\partial M),\rank H_G^\pm(M) \}.$$
\end{corollary}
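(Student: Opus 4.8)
The plan is to combine Theorem~\ref{thm.low bounds} with the translation between $X_M$-cohomology and equivariant cohomology recorded just above the corollary, applied to \emph{both} the manifold $M$ and its boundary $\partial M$. Since the hypothesis is $N(X_M)=F$, corollary~4.4 of \cite{Our paper} gives $\dim(H^{\pm}_{X_M}(M))=\rank H_G^\pm(M)$, so the factor $\dim(H^{\pm}_{X_M}(M))$ appearing on the right-hand side of Theorem~\ref{thm.low bounds} is immediately replaced by $\rank H_G^\pm(M)$. The only genuine point to check is that the other factor, $\dim(H^{\pm}_{X_M}(\partial M))$, likewise equals $\rank H_G^\pm(\partial M)$.

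First I would verify that the hypothesis $N(X_M)=F$ on $M$ restricts sensibly to the boundary: the boundary $\partial M$ is itself a compact oriented manifold (without boundary) on which the torus $G$ acts by isometries, the vector field $X_M$ is tangent to $\partial M$ (it is generated by the flow of $G$, which preserves $\partial M$), and the restricted vector field $X_{\partial M}$ is exactly $X_M|_{\partial M}$. Its zero set is $N(X_M)\cap\partial M$, and the $G$-fixed point set of $\partial M$ is $F\cap\partial M=\partial F$; since $N(X_M)=F$ on all of $M$, intersecting with $\partial M$ gives $N(X_{\partial M})=F\cap\partial M = \partial F$, which is precisely the fixed point set of the $G$-action on $\partial M$. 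So the hypothesis of corollary~4.4 of \cite{Our paper} holds on $\partial M$ as well, and applying that result to $\partial M$ in place of $M$ yields $\dim(H^{\pm}_{X_M}(\partial M))=\rank H_G^\pm(\partial M)$.

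Substituting both equalities into the conclusion of Theorem~\ref{thm.low bounds},
\[
\dim\bigl[\ker\Lambda^{\pm}_{X_M}/\mathcal{E}^{\pm}_{X_M}(\partial M)\bigr]
\leq \min\{\dim(H^{\pm}_{X_M}(\partial M)),\,\dim(H^{\pm}_{X_M}(M))\}
= \min\{\rank H_G^\pm(\partial M),\,\rank H_G^\pm(M)\},
\]
which is exactly the asserted inequality; the containment $\mathcal{E}^{\pm}_{X_M}(\partial M)\subseteq\ker\Lambda^{\pm}_{X_M}$ is already part of Theorem~\ref{thm.low bounds}, so the quotient makes sense. The main (and really the only) obstacle is the bookkeeping in the previous paragraph: confirming that corollary~4.4 of \cite{Our paper} is genuinely applicable to the closed manifold $\partial M$ with its induced $G$-action and restricted vector field, i.e.\ that "$N(X_M)=F$" descends to the boundary in the form "$N(X_{\partial M})=$ (fixed point set of $\partial M$)". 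Once that is granted the proof is a one-line substitution.
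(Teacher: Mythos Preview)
Your proof is correct and follows essentially the same approach as the paper: the corollary is obtained directly by substituting the equalities $\dim H^{\pm}_{X_M}(M)=\rank H_G^\pm(M)$ and $\dim H^{\pm}_{X_M}(\partial M)=\rank H_G^\pm(\partial M)$ (from corollary~4.4 of \cite{Our paper}) into Theorem~\ref{thm.low bounds}. Your explicit verification that the hypothesis $N(X_M)=F$ restricts to the boundary as $N(X_{\partial M})=\partial F$, so that corollary~4.4 applies to the closed manifold $\partial M$, is a point the paper leaves implicit.
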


The following theorem is the analogues of theorem 4.2 of
\cite{Belishev2} (our theorem \ref{B-sh them1 }).
\begin{theorem}\label{thm.main result}
The Neumann $X_M$-trace space
$i^{*}\mathcal{H}^{n-(\mp)}_{X_{M},N}(M) $ can be completely
determined from our boundary data $(\partial M,\Lambda_{X_M})$ in
particular,
\begin{equation}\label{eq.main result}
    (\Lambda_{X_M}-
(\mp1)^{n+1}\d_{X_M}\Lambda^{-1}_{X_M} \d_{X_M})
\Omega^{\pm}_{G}(\partial M)=
i^{*}\mathcal{H}^{n-(\mp)}_{X_{M},N}(M)
\end{equation}

\end{theorem}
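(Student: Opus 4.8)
The plan is to mimic the proof of Theorem \ref{B-sh them1 } in \cite{Belishev2}, adapting every step to the inhomogeneous operator $\d_{X_M}$ and exploiting the structural results already in hand, principally Lemma \ref{lema.4} (which identifies $\ker\Lambda_{X_M}=\Ran\Lambda_{X_M}=i^{*}\mathcal{H}_{X_{M}}(M)$), Lemma \ref{lema.2} (which gives that the solution $\omega$ of the \textsc{bvp} (\ref{eq.DN map bvp3}) has $\d_{X_M}\omega$ an $X_M$-harmonic field and $\delta_{X_M}\omega=0$), and Corollary \ref{coro.2.6}. The operator in question, $A^\pm := \Lambda_{X_M}-(\mp1)^{n+1}\d_{X_M}\Lambda^{-1}_{X_M}\d_{X_M}$, is well-defined on $\Omega^{\pm}_{G}(\partial M)$ by Corollary \ref{coro. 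Hilbert trans.}. I will prove the two inclusions in (\ref{eq.main result}) separately.

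First, the inclusion $A^\pm\Omega^{\pm}_{G}(\partial M)\subseteq i^{*}\mathcal{H}^{n-(\mp)}_{X_{M},N}(M)$. Given $\theta\in\Omega^{\pm}_{G}(\partial M)$, let $\omega$ solve (\ref{eq.DN map bvp3}); then $\Lambda_{X_M}\theta=i^{*}(\star\d_{X_M}\omega)$ and, by Lemma \ref{lema.2}, $\d_{X_M}\omega\in\mathcal{H}^{n-(\mp)}_{X_{M}}(M)$ (note the parity: $\omega$ has parity $\pm$ so $\star\d_{X_M}\omega$ has parity $n-(\mp)$). Separately one must interpret the second term: since $\d_{X_M}\theta\in\Omega^{\mp}_{G}(\partial M)$ lies in $\mathcal{E}^{\mp}_{X_M}(\partial M)\subseteq i^{*}\mathcal{H}_{X_M}(M)=\ker\Lambda_{X_M}$, the element $\Lambda^{-1}_{X_M}\d_{X_M}\theta$ is a well-defined coset and $\d_{X_M}\Lambda^{-1}_{X_M}\d_{X_M}\theta$ is the (honest) image of a solution $\omega'$ of the corresponding \textsc{bvp}, so again by Lemma \ref{lema.2} it is of the form $i^*(\text{an }X_M\text{-harmonic field})$, of parity $n-(\mp)$. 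Hence $A^\pm\theta\in i^{*}\mathcal{H}^{n-(\mp)}_{X_{M}}(M)$. To upgrade "harmonic field" to "Neumann harmonic field" one checks the Neumann condition $i^*\star(A^\pm\theta$'s harmonic representative$)=0$; here the sign $(\mp1)^{n+1}$ is precisely what is needed so that the normal components cancel — this is the computation, exactly paralleling \cite{Belishev2}, that uses $i^{*}(\delta_{X_M}\omega)=0$ and the Hodge-adjointness of normal and tangential components from Remark \ref{remark1}.

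Second, the reverse inclusion: every $i^{*}\lambda_N$ with $\lambda_N\in\mathcal{H}^{n-(\mp)}_{X_{M},N}(M)$ is hit. Given such a $\lambda_N$, apply the $X_M$-Friedrichs decomposition (\ref{F2}) to $\star\lambda_N$ (which has parity $\pm$), exactly as in the proof of Lemma \ref{lema.4}, to write $\star\lambda_N=\d_{X_M}\omega+\mu_N$ with $\mu_N\in\mathcal{H}^{\pm}_{X_{M},N}(M)$ and $\omega$ chosen (Remark \ref{remark2}) so that $\Delta_{X_M}\omega=0$, $\delta_{X_M}\omega=0$. Then $\theta:=i^{*}\omega$ satisfies $\Lambda_{X_M}\theta=i^{*}(\star\d_{X_M}\omega)=\pm i^{*}\lambda_N - i^*\star\mu_N = \pm i^*\lambda_N$ after applying $\star$ and using $i^*\star\mu_N=0$ by the Neumann condition on $\mu_N$; I will then need the companion fact $\d_{X_M}\Lambda^{-1}_{X_M}\d_{X_M}\theta=0$, which follows because $\d_{X_M}\theta=i^*\d_{X_M}\omega=0$ (as $\d_{X_M}\omega$ is an $X_M$-harmonic field, so $i^*\d_{X_M}\omega$... — more carefully, one shows $\d_{X_M}\omega$ can be taken $X_M$-exact-harmonic and traces of such are handled via Lemma \ref{lema.1}). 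Putting this together yields $A^\pm\theta=\pm i^*\lambda_N$, up to rescaling $\theta$, which is in the range. The main obstacle I anticipate is bookkeeping: tracking the parity shifts $\pm\mapsto\mp\mapsto n-(\mp)$ through $\star$ and $\d_{X_M}$, and pinning down the exact power of $(\mp1)$ so that the Neumann condition on the image is satisfied and the second term genuinely vanishes in the reverse inclusion. These are the points where the inhomogeneity of $\d_{X_M}$ could in principle introduce cross-parity terms; the saving grace is that $\d_{X_M}$, $\delta_{X_M}$, $\star$ all interchange the two parities cleanly and commute with $i^*$ in the required way, so the Belishev–Sharafutdinov argument goes through verbatim once the signs are set correctly.
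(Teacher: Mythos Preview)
Your forward inclusion is sound and in fact slightly different from the paper's argument: instead of applying the Friedrichs decomposition (\ref{F1}) to $\d_{X_M}\omega$ and identifying $A^\pm\theta$ as $i^*(\star\lambda_D)$ for a Dirichlet field $\lambda_D$, you build the explicit candidate $\Xi=\star\d_{X_M}\omega-(\mp1)^{n+1}\d_{X_M}\omega'$ (with $\omega'$ solving the \textsc{bvp} for a preimage $\Lambda_{X_M}^{-1}\d_{X_M}\theta$) and check $i^*\star\Xi=0$ directly. That computation does go through once you track $\star\star=(\mp1)^{n+1}$ on parity-$\mp$ forms and $i^*\star\d_{X_M}\omega'=\Lambda_{X_M}\Lambda_{X_M}^{-1}\d_{X_M}\theta=\d_{X_M}\theta$; it is worth writing out rather than asserting.

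Your reverse inclusion, however, has a genuine gap. Decomposing $\star\lambda_N$ via (\ref{F2}) as $\d_{X_M}\omega+\mu_N$ and setting $\theta=i^*\omega$ does give $\Lambda_{X_M}\theta=(\mp1)^{n+1}i^*\lambda_N$, but it does \emph{not} give $\d_{X_M}\theta=0$: since $i^*(\star\lambda_N)=0$ (as $\lambda_N$ is Neumann), one has $\d_{X_M}\theta=i^*\d_{X_M}\omega=-i^*\mu_N$, and by Corollary~\ref{coro.2.5} this vanishes only when $\mu_N=0$. There is no reason for the Neumann projection of a Dirichlet field to vanish, so the second term $\d_{X_M}\Lambda_{X_M}^{-1}\d_{X_M}\theta$ is generically nonzero and your argument stalls. (Your parenthetical appeal to Lemma~\ref{lema.1} does not rescue this.) The paper's route is structurally different: it applies Corollary~\ref{coro.decomposition} --- which rests on the unique-continuation Theorem~\ref{Thm.interseactions} --- to write $\lambda_N=\d_{X_M}\alpha+\delta_{X_M}\beta$ as a sum of an $X_M$-exact and an $X_M$-coexact harmonic field. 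This yields \emph{two} boundary forms $\phi,\psi$ satisfying $\Lambda_{X_M}\psi=\d_{X_M}\phi$, so that both terms of $A^\pm\phi$ contribute and combine to $i^*\lambda_N$. The key point you are missing is that the surjectivity genuinely requires Corollary~\ref{coro.decomposition}, not merely (\ref{F2}).
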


\begin{proof}
We need first to prove that $$ (\Lambda_{X_M}-
(\mp1)^{n+1}\d_{X_M}\Lambda^{-1}_{X_M} \d_{X_M})
\Omega^{\pm}_{G}(\partial M)\subseteq
i^{*}\mathcal{H}^{n-(\mp)}_{X_{M},N}(M)$$ Suppose $\theta \in
\Omega^{\pm}_{G}(\partial M)$, let $\omega \in \Omega^{\pm}_{G}(M)$
be a solution to the \textsc{bvp} (\ref{eq.DN map bvp3}). Lemma
(\ref{lema.2}) proves that $d_{X_M}\omega \in
\mathcal{H}^{\mp}_{X_{M}}(M)$. Applying the $X_M$-Friedrichs
decomposition to $\d_{X_M}\omega $, we get
\begin{equation}\label{eq.4.2}
    \d_{X_M}\omega=\delta_{X_M}\alpha+\lambda_{D} \in
    \mathcal{H}^{\mp}_{X_{M},\mathrm{co}}(M) \oplus \mathcal{H}^{\mp}_{X_{M},D}(M)
\end{equation}
where $\alpha \in \Omega^{\pm}_{G}(M) $ and by remark \ref{remark2},
$\alpha$ can be chosen such that
\begin{equation}\label{eq.4.3}
    \d_{X_M}\alpha=0, \quad \Delta_{X_M}\alpha=0
\end{equation}
we set  $\beta=\star\alpha \in \Omega^{n-\pm}_{G}(M)$. Hence,
eq.(\ref{eq.4.3}) implies
\begin{equation}\label{eq.4.4}
    \delta_{X_M}\beta=0, \quad \Delta_{X_M}\beta=0
\end{equation}
substituting $\alpha=(\pm1)^{n+1} \star \beta$ into
eq.(\ref{eq.4.2}), we have
\begin{equation}\label{eq.4.5}
\d_{X_M}\omega=(\pm1)^{n+1}\delta_{X_M}\star \beta+\lambda_{D}
\end{equation}
which implies
\begin{equation}\label{eq.4.6}
    i^{*} (\d_{X_M}\omega)=(\pm1)^{n+1} i^{*}(\delta_{X_M}\star
\beta).
\end{equation}
But, $$ i^{*} (\d_{X_M}\omega)= d_{X_M} (i^{*}
\omega)=\d_{X_M}\theta$$ and $$\delta_{X_M} \star\beta=\mp(-1)^{n}
\star \d_{X_M}\beta$$ thus, eq.(\ref{eq.4.6}) turns into
\begin{equation}\label{eq.4.7}
\d_{X_M}\theta=-(\mp1)^{n} i^{*} ( \star \d_{X_M}\beta)
\end{equation}
Formulas (\ref{eq.4.4}) and (\ref{eq.4.7}) mean that
\begin{equation}\label{eq.4.8}
d_{X_M}\theta=-(\mp1)^{n}  \Lambda_{X_M}i^{*}\beta.
\end{equation}
Now, applying, ( $i^{*}\star$ ) to eq.(\ref{eq.4.5}) with the fact
that $\Lambda_{X_M}\theta=i^{*}(\star \d_{X_M}\omega)$, we get
\begin{equation}\label{eq.4.9}
    \Lambda_{X_M}\theta=(\pm1)^{n+1} i^{*}(\star \delta_{X_M}\star
\beta)+ i^{*} (\star \lambda_{D}).
\end{equation}
Using the relation $\star \delta_{X_M} \star \beta= (\pm1)^{n}
\d_{X_M}\beta$, then eq.(\ref{eq.4.9}) reduces to
\begin{equation}\label{eq.4.10}
\Lambda_{X_M}\theta={\pm} \d_{X_M}(i^{*}\beta)+ i^{*} (\star
\lambda_{D})
\end{equation}
we can obtain from eq.(\ref{eq.4.8}) that
$$\d_{X_M}(i^{*}\beta)=-(\mp1)^{n} d_{X_M}\Lambda^{-1}_{X_M}d_{X_M}\theta$$
Putting the latter equation in eq.(\ref{eq.4.10}), we get
$$i^{*} (\star\lambda_{D})= (\Lambda_{X_M}-
(\mp1)^{n+1}\d_{X_M}\Lambda^{-1}_{X_M} \d_{X_M})\theta.$$ Hence,
$(\Lambda_{X_M}-(\mp1)^{n+1}\d_{X_M}\Lambda^{-1}_{X_M}
\d_{X_M})\theta \in i^{*}\mathcal{H}^{n-(\mp)}_{X_{M},N}(M) $.

The next step is then to prove the converse, i.e.
$$ i^{*}\mathcal{H}^{n-(\mp)}_{X_{M},N}(M) \subseteq (\Lambda_{X_M}-
(\mp1)^{n+1}\d_{X_M}\Lambda^{-1}_{X_M} \d_{X_M})
\Omega^{\pm}_{G}(\partial M)$$ Given $\lambda_N \in
\mathcal{H}^{n-(\mp)}_{X_{M},N}(M)$, then corollary
\ref{coro.decomposition} asserts that $\lambda_N$ has the following
representation
\begin{equation}\label{eq.4.11}
    \lambda_N=\d_{X_M}\alpha+\delta_{X_M}\beta \in \mathcal{H}^{n-\mp}_{X_{M},\mathrm{ex}}(M) +
\mathcal{H}^{n-\mp}_{X_{M},\mathrm{co}}(M)
\end{equation}
and also by remark \ref{remark2}, $\alpha$ and $\beta$ can be chosen
respectively to satisfy
\begin{equation}\label{eq.4.12}
    \delta\alpha=0, \quad \Delta_{X_M}\alpha=0
\end{equation}
and
\begin{equation}\label{eq.4.13}
\d_{X_M}\beta=0, \quad \Delta_{X_M}\beta=0
\end{equation}
We set up the transformations $$\omega=-(\pm1)^{n} \star\beta, \quad
\epsilon=-(\mp1)^{n+1}\alpha$$ Then
eqs.(\ref{eq.4.12})-(\ref{eq.4.13}) turn into
\begin{equation}\label{eq.4.14}
\delta\omega=0, \quad \Delta_{X_M}\omega=0
\end{equation}
\begin{equation}\label{eq.4.15}
   \delta_{X_M}\epsilon=0, \quad \Delta_{X_M}\epsilon=0
\end{equation}
and eq.(\ref{eq.4.11}) implies
\begin{equation}\label{eq.4.16}
\lambda_N=\star\d_{X_M}\omega- (\mp1)^{n+1}\d_{X_M}\epsilon
\end{equation}
hence,
\begin{equation}\label{eq.4.17}
\star\lambda_N=-(\mp1)^{n+1}(\star\d_{X_M}\epsilon -\d_{X_M}\omega).
\end{equation}
We can define forms $\phi, \psi \in \Omega_{G}(\partial M)$ by
setting
\begin{equation}\label{eq.4.18}
    \phi=i^{*}\omega, \quad \psi=i^{*}\epsilon
\end{equation}
Restricting eq.(\ref{eq.4.16}) to the boundary and using the fact
that $i^{*} \star\d_{X_M}\omega=\Lambda_{X_M}\phi$, we obtain
\begin{equation}\label{eq.4.19}
i^{*} \lambda_N=\Lambda_{X_M}\phi- (\mp1)^{n+1}\d_{X_M}
i^{*}\epsilon
\end{equation}
Restricting eq.(\ref{eq.4.17}) to the boundary
\begin{equation}\label{eq.4.20}
i^{*}(\star\d_{X_M}\epsilon) = \d_{X_M}(i^{*}\omega)
\end{equation}
but $i^{*}(\star\d_{X_M}\epsilon)=\Lambda_{X_M}\psi$ because of
eq.(\ref{eq.4.15}) and the second of equality (\ref{eq.4.18}).
Hence, eq.(\ref{eq.4.20}) turns to
\begin{equation}\label{eq.4.21}
    \Lambda_{X_M}\psi=\d_{X_M}\phi
\end{equation}
Now, we can eliminate the form $\psi$ from eq.(\ref{eq.4.19}) and
eq.(\ref{eq.4.21}) and we can obtain that
$$i^{*}\lambda_{N}= (\Lambda_{X_M}-
(\mp1)^{n+1}\d_{X_M}\Lambda^{-1}_{X_M} \d_{X_M})\phi$$ Hence,
$i^{*}\lambda_N \in (\Lambda_{X_M}-
(\mp1)^{n+1}\d_{X_M}\Lambda^{-1}_{X_M} \d_{X_M})
\Omega^{\pm}_{G}(\partial M).$
\end{proof}

\section{$X_M$- Hilbert transform}
 In this section, we introduce the $X_M$- Hilbert transform which will be used in section 6. We begin with the following
definition.
\begin{definition}[$X_M$- Hilbert transform]
The $X_M$- Hilbert transform is the operator $$T_{X_M}=
\d_{X_M}\Lambda^{-1}_{X_M}:i^{*}\mathcal{H}^{\pm}_{X_{M}}(M)\longrightarrow
i^{*}\mathcal{H}^{n-(\pm)}_{X_{M}}(M).$$ $T_{X_M}$ is a well-defined
operator by corollary \ref{coro. Hilbert trans.} and the restriction
of $T_{X_M}$ to $X_M$-exact boundary forms $
\mathcal{E}^{\pm}_{X_M}(\partial M) \subseteq
i^{*}\mathcal{H}^{\pm}_{X_{M}}(M)$ satisfies
$$T_{X_M}:\mathcal{E}^{\pm}_{X_M}(\partial M)\longrightarrow \mathcal{E}^{n-(\pm)}_{X_M}(\partial M).$$
\end{definition}

%
\begin{lemma}\label{lema.5}
The $X_M$- Hilbert transform maps
$i^{*}\mathcal{H}^{\pm}_{X_{M},N}(M)$ to
$i^{*}\mathcal{H}^{n-(\pm)}_{X_{M},N}(M)$.
\end{lemma}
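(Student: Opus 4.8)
The plan is to show that if $\lambda_N \in \mathcal{H}^{\pm}_{X_M,N}(M)$, then $T_{X_M}(i^*\lambda_N) = \d_{X_M}\Lambda_{X_M}^{-1}(i^*\lambda_N)$ lands in $i^*\mathcal{H}^{n-(\pm)}_{X_M,N}(M)$. The starting point is Corollary~\ref{coro. Hilbert trans.}: since $i^*\lambda_N \in i^*\mathcal{H}^{\pm}_{X_M}(M) = \Ran\Lambda_{X_M}$, there is $\theta \in \Omega^{\pm}_G(\partial M)$ with $\Lambda_{X_M}\theta = i^*\lambda_N$, and $\d_{X_M}\theta$ is uniquely determined by this equation. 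First I would take a solution $\omega \in \Omega^{\pm}_G(M)$ of the \textsc{bvp} (\ref{eq.DN map bvp3}) with boundary value $\theta$, so that by Lemma~\ref{lema.2} we have $\d_{X_M}\omega \in \mathcal{H}^{\mp}_{X_M}(M)$, $\delta_{X_M}\omega = 0$, and by definition $\Lambda_{X_M}\theta = i^*(\star\,\d_{X_M}\omega) = i^*\lambda_N$.

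Next I would exploit the uniqueness statement in Theorem~\ref{thm.main result}'s machinery (really just the Friedrichs decomposition): the idea is that from $i^*(\star\,\d_{X_M}\omega) = i^*\lambda_N$ with $\lambda_N$ a Neumann $X_M$-harmonic field, one wants to conclude that $\star\,\d_{X_M}\omega$ itself differs from $\lambda_N$ by something whose $i^*$-image is controlled, and in particular that $i^*(\star\,\d_{X_M}\omega)$ already equals $i^*$ of a \emph{Neumann} field. More precisely, write $\mu := \star\,\d_{X_M}\omega \in \mathcal{H}^{n-(\pm)}_{X_M}(M)$; then $i^*\mu = i^*\lambda_N \in i^*\mathcal{H}^{n-(\pm)}_{X_M,N}(M)$, so trivially $i^*\mu$ itself is already in the Neumann $X_M$-trace space. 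The real content is therefore to identify $T_{X_M}(i^*\lambda_N) = \d_{X_M}\theta$ with the trace of a Neumann field. For this I would use the relation, established inside the proof of Theorem~\ref{thm.main result} (see eqs.~(\ref{eq.4.8})--(\ref{eq.4.10})), that $\d_{X_M}\theta$ is, up to sign, $\Lambda_{X_M}i^*\beta$ where $\beta = \star\alpha$ and $\d_{X_M}\omega = \delta_{X_M}\alpha + \lambda_D$ is the Friedrichs decomposition; but here $\d_{X_M}\omega \in \mathcal{H}^{\mp}_{X_M}(M)$ so its trace behaves well.

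Concretely, the cleanest route: set $\nu = \star\,\d_{X_M}\omega \in \mathcal{H}^{n-(\pm)}_{X_M}(M)$, so $i^*\nu = \Lambda_{X_M}\theta$. Apply the $X_M$-Friedrichs decomposition (\ref{F2}) to $\nu$: $\nu = \d_{X_M}\sigma + \nu_N$ with $\nu_N \in \mathcal{H}^{n-(\pm)}_{X_M,N}(M)$ and, by Remark~\ref{remark2}, $\sigma$ may be taken $\Delta_{X_M}$-harmonic. Then $i^*\nu = \d_{X_M}(i^*\sigma) + i^*\nu_N$, i.e. $\Lambda_{X_M}\theta = \d_{X_M}(i^*\sigma) + i^*\nu_N$. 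Now I would apply $\d_{X_M}$ (which kills $i^*\nu_N$ since $\nu_N$ is an $X_M$-harmonic field so $\d_{X_M}\nu_N=0$, hence $\d_{X_M}i^*\nu_N = i^*\d_{X_M}\nu_N = 0$) to get $\d_{X_M}\Lambda_{X_M}\theta = 0$, consistent with (\ref{eq.3.6}); this alone is not enough. Instead, the point is that $T_{X_M}(\Lambda_{X_M}\theta) = \d_{X_M}\theta$, and I need a second solution interpretation: observe $\omega$ itself solves the \textsc{bvp} with its own boundary data $i^*\omega = \theta$, and $\d_{X_M}\omega = \star^{-1}\nu = \pm\star\nu = \pm\star\d_{X_M}\sigma \pm \star\nu_N$; taking $i^*$ and using $i^*\d_{X_M}\omega = \d_{X_M}\theta$ together with $i^*\star\nu_N = 0$ (Neumann condition!) yields $\d_{X_M}\theta = \pm\, i^*(\star\,\d_{X_M}\sigma) = \pm\,\Lambda_{X_M}(i^*\sigma)$. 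Thus $T_{X_M}(i^*\lambda_N) = \d_{X_M}\theta = \pm\,\Lambda_{X_M}(i^*\sigma) \in \Ran\Lambda_{X_M} = i^*\mathcal{H}^{n-(\pm)}_{X_M}(M)$ by Lemma~\ref{lema.4}; and since $\Lambda_{X_M}(i^*\sigma) = i^*(\star\,\d_{X_M}\sigma)$ where $\star\,\d_{X_M}\sigma$ is (up to sign) $\star$ of an $X_M$-exact harmonic field, Lemma~\ref{lema.3}/Lemma~\ref{lema.2} arguments show $\star\,\d_{X_M}\sigma$ is itself a Neumann field (its $i^*\star$ vanishes because $\d_{X_M}\sigma \in \mathcal{H}^{n-(\pm)}_{X_M,\mathrm{ex}}(M)$ forces... ) — this identification is the step I expect to be the main obstacle, and I would close it either by the duality $\star:\mathcal{H}^{\pm}_{X_M,N}\to\mathcal{H}^{n-\pm}_{X_M,D}$ combined with tracking boundary conditions through the Friedrichs pieces, or by appealing directly to Corollary~\ref{coro.2.5}/Corollary~\ref{coro.2.6} that $i^*$ is injective on $\mathcal{H}^{n-(\pm)}_{X_M,N}(M)$ so that the decomposition $\Lambda_{X_M}\theta = i^*\nu_N$ (all the $\d_{X_M}(i^*\sigma)$ ambiguity being absorbable) pins down the image in the Neumann trace space.
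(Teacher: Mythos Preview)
Your approach has a genuine gap at exactly the step you flag as ``the main obstacle,'' and neither of your proposed closings works. You correctly arrive at $\d_{X_M}\theta = \pm\, i^*(\star\,\d_{X_M}\sigma)$, but $\star\,\d_{X_M}\sigma$ lies in $\mathcal{H}_{X_M,\mathrm{co}}(M)$, not in $\mathcal{H}_{X_M,N}(M)$: the Neumann condition $i^*\star(\star\,\d_{X_M}\sigma)=\pm\,\d_{X_M}(i^*\sigma)$ has no reason to vanish. Your alternative closing, claiming that ``$\Lambda_{X_M}\theta = i^*\nu_N$'' because the exact piece can be ``absorbed,'' is also wrong: the decomposition $\Lambda_{X_M}\theta = \d_{X_M}(i^*\sigma) + i^*\nu_N$ does not let you drop the first summand, since an element of $i^*\mathcal{H}_{X_M,N}(M)$ can perfectly well be $X_M$-exact on $\partial M$ without being zero. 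Injectivity of $i^*$ on $\mathcal{H}_{X_M,N}(M)$ does not help here either. In fact your geometric route can be rescued much more simply than via the Friedrichs decomposition of $\nu$: since $i^*\nu = i^*\lambda_N$ and both $\nu=\star\,\d_{X_M}\omega$ and $\lambda_N$ are $X_M$-harmonic fields, one has directly $\nu-\lambda_N\in\mathcal{H}_{X_M,D}(M)$; then $\d_{X_M}\theta = i^*\d_{X_M}\omega = \pm\, i^*\!\star\nu = \pm\, i^*\!\star\lambda_N \pm\, i^*\!\star(\nu-\lambda_N)$, where the first term vanishes by the Neumann condition on $\lambda_N$ and $\star(\nu-\lambda_N)\in\mathcal{H}_{X_M,N}(M)$, giving the conclusion.

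The paper's own proof is entirely different and purely algebraic: it never chooses a preimage $\theta$ or solves a \textsc{bvp}. Instead it invokes Theorem~\ref{thm.main result} to write $\varphi = (\Lambda_{X_M} - (\pm1)^{n+1}\d_{X_M}\Lambda^{-1}_{X_M}\d_{X_M})\theta$ for some boundary form $\theta$, and then observes the formal identity
\[
\d_{X_M}\Lambda^{-1}_{X_M}\bigl(\Lambda_{X_M} - c\,\d_{X_M}\Lambda^{-1}_{X_M}\d_{X_M}\bigr)
= \bigl(\Lambda_{X_M} - c\,\d_{X_M}\Lambda^{-1}_{X_M}\d_{X_M}\bigr)\Lambda^{-1}_{X_M}\d_{X_M},
\]
so that $T_{X_M}\varphi$ is again of the form covered by Theorem~\ref{thm.main result} (applied to $\Lambda^{-1}_{X_M}\d_{X_M}\theta$) and hence lies in $i^*\mathcal{H}^{n-(\pm)}_{X_M,N}(M)$. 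This two-line argument replaces all the harmonic-field bookkeeping you attempt.
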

\begin{proof}
Let $\varphi \in i^{*}\mathcal{H}^{\pm}_{X_{M},N}(M)$ then theorem
\ref{thm.main result} implies that $$\varphi=(\Lambda_{X_M}-
(\pm1)^{n+1}\d_{X_M}\Lambda^{-1}_{X_M} \d_{X_M} ) \theta
$$ for some $\theta \in \Omega^{n-(\mp)}(\partial M)$. Hence, it
follows that
\begin{eqnarray}
  T_{X_M} \varphi &=&\d_{X_M}\Lambda^{-1}_{X_M} (\Lambda_{X_M}-
(\pm1)^{n+1}\d_{X_M}\Lambda^{-1}_{X_M} \d_{X_M} ) \theta \nonumber  \\
   &=&(\d_{X_M}-
(\pm1)^{n+1}\d_{X_M}\Lambda^{-1}_{X_M}\d_{X_M}\Lambda^{-1}_{X_M} \d_{X_M} ) \theta  \nonumber \\
   &=&(\Lambda_{X_M}-
(\pm1)^{n+1}\d_{X_M}\Lambda^{-1}_{X_M} \d_{X_M}
)\Lambda^{-1}_{X_M}\d_{X_M}  \theta \nonumber \\
   &=& (\Lambda_{X_M}-
(\pm1)^{n+1}\d_{X_M}\Lambda^{-1}_{X_M} \d_{X_M}
)\Lambda^{-1}_{X_M}\d_{X_M} \theta \nonumber
\end{eqnarray}
but $\Lambda^{-1}_{X_M}\d_{X_M} ( \theta) \in
\Omega^{\mp}_{G}(\partial M)$. Thus, by theorem (\ref{thm.main
result}) we find that  the right hand side of the latter formula
must belong to $ i^{*}\mathcal{H}^{n-(\pm)}_{X_{M},N}(M).$
\end{proof}

\section{Recovering $X_M$-cohomology from the boundary data $(\partial M,\Lambda_{X_M})$}
 In this section we pose two questions where in subsection
 \ref{sub.6.1}  we present our answer to the following first
question:

 \emph{``Can the additive structure of the real absolute and
relative $X_M$-cohomology be completely recovered from the boundary
data $(\partial M,\Lambda_{X_M})$?.'' } The answer is affirmative
and more precisely, we show that the data $(\partial
M,\Lambda_{X_M})$ determines the long exact sequence of
$X_M$-cohomology of the topological pair $(M,\partial M)$.

 While in subsection \ref{sub.6.2}, we present a partial answer to the following
second question:

 \emph{``Can the ring (i.e. multiplicative) structure of the real absolute and relative $X_M$-cohomology be
completely recovered from the boundary data $(\partial
M,\Lambda_{X_M})$?''}

\subsection{Recovering the additive real
$X_M$-cohomology.}\label{sub.6.1}

Since the vector field $X_M$ which we are considering is always
tangent to the boundary $\partial M$ then we can still define
$X_M$-cohomology on $\partial M$, i.e. $ H^\pm_{X_M}(\partial M)$.
Hence, from our definitions of the absolute and relative
$X_M$-cohomology \cite{Our paper}, we can set up the following exact
$X_M$-cohomology sequence of the pair $(M,\partial M)$ as follows:
\begin{equation}\label{sequence.6.1}
\begin{CD}
\dots@>\pi^{*}>>H^\pm_{X_M}(M,\,\partial
M)@>\rho^{*}>>H^\pm_{X_M}(M)@>i^{*}>>H^\pm_{X_M}(\partial
M)@>\pi^{*}>>H^\mp_{X_M}(M,\,\partial M)@>\rho^{*}>>\dots
\end{CD}
\end{equation}
where
\begin{enumerate}
\item $i^{*}[\omega]_{(X_{M},M)}=[i^*\omega]_{(X_{M},\partial M)},$ \quad
$\forall [\omega]_{(X_{M},M)} \in H^\pm_{X_M}(M).$
\item $ \rho^{*}[\omega]_{(X_{M},M,\partial M)}=[\omega]_{(X_{M},M)}, \quad \forall [\omega]_{(X_{M},M,\partial M)} \in H^\pm_{X_M}(M,\,\partial M)$. In fact, the operator $\rho^{*} $ is induced by the embedding of pairs $\rho:(M,\emptyset)\subset (M,\partial
M)$. $\rho^{*}$ is well-defined.
\item $\pi^{*}[\omega]_{(X_{M},\partial M)}= [\d_{X_M}\alpha]_{(X_{M},M,\partial M)}, \quad \forall[\omega]_{(X_{M},\partial M)} \in H^\pm_{X_M}(\partial
M)$, where $\alpha \in \Omega^{\pm}_{G}(M)$ is any extension of
$\omega \in \Omega^{\pm}_{G}(\partial M)$ to $M$, i.e.
$i^{*}\alpha=\omega$. Since $\d_{X_M}$ and $i^{*}$ commute, then
$[\d_{X_M}\alpha]_{(X_{M},M,\partial M)} \in
H^\mp_{X_M}(M,\,\partial M)$. The form $\d_{X_M}\alpha$ is certainly
$X_M$-exact, but is not in general relatively $X_M$-exact, i.e.
$i^{*}\alpha\neq 0.$
\end{enumerate}
Sequence (\ref{sequence.6.1}) is exact in the sense that at each
stage the image of the incoming homomorphism is the kernel of the
outgoing one.

Now, to answer the above first question, we use theorem
\ref{thm.main result} which shows that we can determine the space $
i^{*}\mathcal{H}^{\pm}_{X_{M},N}(M)$ from our boundary data and
corollary \ref{coro.2.6} which gives us the isomorphisms $f$ and
$h$.

So, if the boundary data $(\partial M,\Lambda_{X_M})$ is given then
we can construct the sequence
\begin{equation}\label{sequence.6.2}
\begin{CD}
\dots@>\overline{\pi}^{*}>>i^{*}\mathcal{H}^{n-(\pm)}_{X_{M},N}(M)@>\overline{\rho}^{*}>>i^{*}\mathcal{H}^{\pm}_{X_{M},N}(M)@>\overline{i}^{*}>>H^\pm_{X_M}(\partial
M)@>\overline{\pi}^{*}>>i^{*}\mathcal{H}^{n-(\mp)}_{X_{M},N}(M)@>\overline{\rho}^{*}>>\dots
\end{CD}
\end{equation}
where we define the operators of sequence
(\ref{sequence.6.2}) by the following formulas:
\begin{enumerate}
\item $\overline{i}^{*}\theta=[\theta]_{(X_{M},\partial M)},$ \quad
$\forall \theta \in i^{*}\mathcal{H}^{\pm}_{X_{M},N}.$ i.e.
$\theta=i^{*}\omega$ where $\omega \in \mathcal{H}^{\pm}_{X_{M},N}$
, then $\theta$ is $X_M$-closed because $i^*$ and $\d_{X_M}$
commute.
\item Using Lemma  \ref{lema.5}, then we set $$\overline{\rho}^{*}\theta=-(\pm1)^{n+1} T_{X_M}\theta, \quad \forall \theta \in
i^{*}\mathcal{H}^{n-(\pm)}_{X_{M},N}$$
\item Based on theorem  \ref{thm.main result}, then
$\Lambda_{X_M}\theta=(\Lambda_{X_M}-
(\mp1)^{n+1}\d_{X_M}\Lambda^{-1}_{X_M} \d_{X_M})\theta$ if $
[\theta]_{(X_{M},\partial M)} \in H^\pm_{X_M}(\partial M)$. Hence,
we set  $$\overline{\pi}^{*}[\theta]_{(X_{M},\partial
M)}=(\mp1)^{n+1} \Lambda_{X_M}\theta, \quad \forall \in
[\theta]_{(X_{M},\partial M)} \in H^\pm_{X_M}(\partial M).$$

\end{enumerate}

More concretely, our goal is then to recover sequence
(\ref{sequence.6.1}) from sequence (\ref{sequence.6.2}). It means
that we should prove that the following diagram (\ref{diagram.6.3})
is commutative diagram.

\begin{equation}\label{diagram.6.3}
\begin{CD}
\dots@>\overline{\pi}^{*}>>i^{*}\mathcal{H}^{n-(\pm)}_{X_{M},N}(M)@>\overline{\rho}^{*}>>i^{*}\mathcal{H}^{\pm}_{X_{M},N}(M)@>\overline{i}^{*}>>H^\pm_{X_M}(\partial
M)@>\overline{\pi}^{*}>>i^{*}\mathcal{H}^{n-(\mp)}_{X_{M},N}(M)@>\overline{\rho}^{*}>>\dots \\
@. @VV h V @VV f V @VV \iota V @VV h V\\
\dots@>\pi^{*}>>H^\pm_{X_M}(M,\,\partial
M)@>\rho^{*}>>H^\pm_{X_M}(M)@>i^{*}>>H^\pm_{X_M}(\partial
M)@>\pi^{*}>>H^\mp_{X_M}(M,\,\partial M)@>\rho^{*}>>\dots
\end{CD}
\end{equation}
where $\iota$ is the identity operator. But, one can prove the commutativity of the diagram (\ref{diagram.6.3}) by a method similar to that given in
\cite{Belishev2} but in terms of our operators above.

Actually, the above construction proves that the data $(\partial
M,\Lambda_{X_M})$ recovers sequence (\ref{sequence.6.1}) of the pair
$(M,\partial M)$ up to an isomorphism (i.e. $f$ and $h$ are given in
corollary \ref{coro.2.6}) from the sequence (\ref{sequence.6.2}). We
therefore can state the following theorem.
\begin{theorem}\label{thm.recover additive}
The boundary data $(\partial M,\Lambda_{X_M})$ completely determines
the additive real absolute and relative $X_M$-cohomology structure
by showing the diagram (\ref{diagram.6.3}) is commutative and then
\begin{eqnarray}
  H^\pm_{X_M}(M)&\cong& (\Lambda_{X_M}-
(\pm1)^{n+1}\d_{X_M}\Lambda^{-1}_{X_M} \d_{X_M})
\Omega^{n-(\mp)}_{G}(\partial M) \label{eq.6.3}\\
  H^\pm_{X_M}(M,\,\partial
M) &\cong& (\Lambda_{X_M}- (\pm1)^{n+1}\d_{X_M}\Lambda^{-1}_{X_M}
\d_{X_M}) \Omega^{\mp}_{G}(\partial M)\label{eq.6.4}
\end{eqnarray}
\end{theorem}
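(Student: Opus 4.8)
\textbf{Proof plan for Theorem \ref{thm.recover additive}.}

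The plan is to assemble the two isomorphisms \eqref{eq.6.3} and \eqref{eq.6.4} directly from results already in hand, so the only genuinely new work is the verification that the ladder \eqref{diagram.6.3} commutes. First I would record that Theorem \ref{thm.main result} (applied once with the parity ``$\pm$'' and once with the complementary parity ``$n-(\mp)$'') gives
\[
 i^{*}\mathcal{H}^{n-(\mp)}_{X_{M},N}(M)=\bigl(\Lambda_{X_M}-(\mp1)^{n+1}\d_{X_M}\Lambda^{-1}_{X_M}\d_{X_M}\bigr)\Omega^{\pm}_{G}(\partial M),
\]
and correspondingly $i^{*}\mathcal{H}^{\pm}_{X_{M},N}(M)$ equals the range of the same operator acting on $\Omega^{n-(\mp)}_{G}(\partial M)$. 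Then Corollary \ref{coro.2.6}(1) gives the isomorphism $f:i^{*}\mathcal{H}^{\pm}_{X_{M},N}(M)\xrightarrow{\sim}H^{\pm}_{X_M}(M)$ and Corollary \ref{coro.2.6}(2) gives $h:i^{*}\mathcal{H}^{n-(\pm)}_{X_{M},N}(M)\xrightarrow{\sim}H^{\pm}_{X_M}(M,\partial M)$. Composing $f$ with the first displayed identity yields \eqref{eq.6.3}, and composing $h$ with the second yields \eqref{eq.6.4}; this part is essentially bookkeeping once the commutative diagram is established, since it shows the abstract groups on the right of \eqref{diagram.6.3} are genuinely computed by the operator-theoretic ranges appearing on the left.

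The substantive step is to check that each of the three square types in \eqref{diagram.6.3} commutes: the $\overline{i}^{*}$/$i^{*}$ square with vertical maps $f$ and $\iota$, the $\overline{\rho}^{*}$/$\rho^{*}$ square with vertical maps $h$ and $f$, and the $\overline{\pi}^{*}$/$\pi^{*}$ square with vertical maps $\iota$ and $h$. For the first square: given $\theta=i^{*}\omega$ with $\omega\in\mathcal{H}^{\pm}_{X_M,N}(M)$, we have $f(\theta)=[\omega]$ and $i^{*}f(\theta)=[i^{*}\omega]_{(X_M,\partial M)}=\overline{i}^{*}\theta$ by definition, so commutativity is immediate from the definitions. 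For the $\overline{\pi}^{*}$ square: starting from $[\theta]_{(X_M,\partial M)}\in H^{\pm}_{X_M}(\partial M)$ with $\theta=i^{*}\alpha$, the connecting map sends it to $[\d_{X_M}\alpha]_{(X_M,M,\partial M)}$; one identifies the unique Dirichlet $X_M$-harmonic representative $\lambda_D$ of that class via the $X_M$-Hodge–Morrey–Friedrichs decomposition of $\d_{X_M}\alpha$, shows $i^{*}\star\lambda_D$ is (up to the sign $(\mp1)^{n+1}$) exactly $\Lambda_{X_M}\theta$ by running the computation in the first half of the proof of Theorem \ref{thm.main result} — where $\lambda_D$ is precisely the ``$\lambda_D$'' term there and $i^{*}\star\lambda_D=(\Lambda_{X_M}-(\mp1)^{n+1}\d_{X_M}\Lambda^{-1}_{X_M}\d_{X_M})\theta=\Lambda_{X_M}\theta$ on closed $\theta$ — and then applies $h$. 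For the middle square: given $\theta\in i^{*}\mathcal{H}^{n-(\pm)}_{X_M,N}(M)$, Lemma \ref{lema.5} puts $T_{X_M}\theta$ in $i^{*}\mathcal{H}^{\mp\,\text{-type}}_{X_M,N}(M)$; writing $\theta=i^{*}\star\beta$ for a suitably normalized Neumann field $\beta$ (using Remark \ref{remark2}) one reads off from \eqref{eq.4.8}/\eqref{eq.4.21}-type identities that $-(\pm1)^{n+1}T_{X_M}\theta$ represents, under $f$, the image $\rho^{*}h(\theta)$ of the relative class $[\star\beta]$; this is the same linear-algebra elimination that drives the converse half of the proof of Theorem \ref{thm.main result}, just read at the level of harmonic representatives rather than boundary forms.

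The main obstacle I anticipate is purely organizational rather than conceptual: the signs $(\pm1)^{n+1}$, $(\mp1)^{n}$, the parity shifts $n-(\pm)$, and the interaction of $\star$ with $\d_{X_M}$ versus $\delta_{X_M}$ (the relations $\star\delta_{X_M}\star\beta=(\pm1)^{n}\d_{X_M}\beta$ and $\delta_{X_M}\star\beta=\mp(-1)^{n}\star\d_{X_M}\beta$ used in the proof of Theorem \ref{thm.main result}) must be tracked consistently through all three squares, and it is easy to drop a sign when passing between the ``$\pm$'' and ``$n-(\mp)$'' rows. I would therefore fix one parity convention at the outset, verify the $\overline{\pi}^{*}$ square in full detail since it is the one where the $X_M$-DN operator enters non-trivially, and then note that exactness of \eqref{sequence.6.1} — which is part of the setup of this section — together with the commuting ladder and the fact that $f,h$ are isomorphisms forces \eqref{sequence.6.2} to be exact as well, so that the ranges on the left-hand sides of \eqref{eq.6.3}–\eqref{eq.6.4} are well-defined groups canonically isomorphic (via $f$ and $h$) to the absolute and relative $X_M$-cohomology. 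Since the paper explicitly delegates the detailed verification to ``a method similar to that given in \cite{Belishev2}'', the write-up can legitimately be brief, pointing to the corresponding lemmas of \cite{Belishev2} and indicating the three substitutions above.
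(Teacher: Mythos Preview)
Your proposal is correct and follows essentially the same approach as the paper: the isomorphisms \eqref{eq.6.3}--\eqref{eq.6.4} are obtained by composing Theorem~\ref{thm.main result} with the isomorphisms $f,h$ of Corollary~\ref{coro.2.6}, and the commutativity of diagram~\eqref{diagram.6.3} is verified square-by-square by the method of \cite{Belishev2} adapted to the $X_M$-operators, exactly as the paper indicates in the paragraph preceding the theorem. Your outline is in fact more detailed than the paper's own treatment, which simply asserts that ``one can prove the commutativity of the diagram (\ref{diagram.6.3}) by a method similar to that given in \cite{Belishev2}'' and then states the theorem without a separate proof environment.
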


\subsection{Recovering the ring structure of the real $X_M$-cohomology.}\label{sub.6.2}
First of all, we consider the mixed cup product $\overline{\cup}$
between the absolute and relative $X_M$-cohomology as follows:
$$\overline{\cup}: H^{\pm}_{X_M}(M)\times H^{\pm}_{X_M}(M,\partial M) \longrightarrow H^{\pm}_{X_M}(M,\partial M)$$
by setting
$$[\alpha]_{(X_M,M)} \overline{\cup} [\beta]_{(X_M,M,\partial M) }=[\alpha\wedge\beta]_{(X_M,M,\partial M) },\quad \forall
([\alpha]_{(X_M,M)},[\beta]_{(X_M,M,\partial M) }) \in
H^{\pm}_{X_M}(M)\times H^{\pm}_{X_M}(M,\partial M) $$ it is easy to
check that $\overline{\cup}$ is a well-defined map. In addition,
corollary 3.17 of \cite{Our paper} asserts that any absolute and
relative $X_M$-cohomology classes contain a unique Neumann and
Dirichlet $X_M$-harmonic field respectively. Hence, we can regard
any absolute (relative) $X_M$-cohomology class as a
Neumann(Dirichlet) $X_M$-harmonic field. But $[\alpha]_{(X_M,M)}
\overline{\cup} [\beta]_{(X_M,M,\partial M)
}=[\alpha\wedge\beta]_{(X_M,M,\partial M) }$ is a relative
$X_M$-cohomology class, so there exists a unique Dirichlet
$X_M$-harmonic field $\eta \in \mathcal{H}^{\pm}_{X_M,D}(M) $ such
that $[\alpha\wedge\beta]_{(X_M,M,\partial M)
}=[\eta]_{(X_M,M,\partial M) } $, i.e.
\begin{equation}\label{eq.6.6}
    \alpha\wedge\beta=\eta+d_{X_M}\xi \in \mathcal{H}^{\pm}_{X_M,D}(M) \oplus
    \mathcal{E}^{\pm}_{X_M}(M).
 \end{equation}
But, we can get from corollary \ref{coro.2.6} that
$$H^{\pm}_{X_M}(M,\partial M)\cong H^{n-(\pm)}_{X_M}(M)\cong i^{*}\mathcal{H}^{n-(\pm)}_{X_{M},N}(M)$$
According to our illustrations above we know that an absolute
$X_M$-cohomology class $[\alpha]_{(X_M,M)} \in H^{\pm}_{X_M}(M) $
and relative $X_M$-cohomology classes $[\beta]_{(X_M,M,\partial M)
}, [\alpha\wedge\beta]_{(X_M,M,\partial M) } \in
H^{\pm}_{X_M}(M,\partial M)$ are represented by the Neumann
$X_M$-harmonic field $\alpha \in\mathcal{H}^{\pm}_{X_M,N}(M)$ and
the Dirichlet $X_M$-harmonic fields $\beta, \eta \in
\mathcal{H}^{\pm}_{X_M,D}(M) $ respectively, such that they
correspond, respectively, to forms on the boundary by setting
$$ \phi=i^{*}\alpha \in i^{*} \mathcal{H}^{\pm}_{X_M,N}(M), \quad
\psi=i^{*} \star \beta \in i^{*} \mathcal{H}^{n-(\pm)}_{X_M,N}(M) ,
\quad \vartheta=i^{*} \star \eta \in i^{*}
\mathcal{H}^{n-(\pm)}_{X_M,N}(M)$$

As alluded to before, our answer to the second question will only be
partial, in the sense that we will not consider all the classes of
the relative $X_M$-cohomology. In fact, we will just consider the
\emph{boundary subspace} (which we denote by
$BH^{\pm}_{X_M}(M,\partial M)$ ) of $H^{\pm}_{X_M}(M,\partial M)$.
We define $BH^{\pm}_{X_M}(M,\partial M)$ as follows:
$$BH^{\pm}_{X_M}(M,\partial M)=\{[\d_{X_M}\varrho] \mid \varrho \in \Omega^{\mp}_{G}(M), i^*(\d_{X_M}\varrho)=0\}$$

Actually, in sequence (\ref{sequence.6.1}), our definition of the
operator $\pi^{*}$ represents the definition of the boundary
subspace of $H^{\pm}_{X_M}(M,\partial M)$. More precisely, the image
of $H^\pm_{X_M}(\partial M)$ inside $H^\mp_{X_M}(M,\,\partial M)$
represents the natural portion to interpret as coming from the
boundary. But, we have proved that  $H^{\pm}_{X_M}(M,\,\partial M)
\cong \mathcal{H}^\pm_{X_{M},D}(M)$. Hence, on translation into the
language of $X_M$-harmonic fields, we can identify
$$BH^{\pm}_{X_M}(M,\partial M)\cong \mathcal{E}\mathcal{H}^\pm_{X_{M},D}(M)$$
where $\mathcal{E}\mathcal{H}^\pm_{X_{M},D}(M)=\{\d_{X_M}\epsilon
\in \mathcal{H}^\pm_{X_{M},D}(M) \mid \epsilon \in
\Omega^{\mp}_{G}(M) \}$. Clearly, Hodge star $\star$ gives
$$c\mathcal{E}\mathcal{H}^{n-\pm}_{X_{M},N}(M)=\star \mathcal{E}\mathcal{H}^\pm_{X_{M},D}(M)
$$ where
$c\mathcal{E}\mathcal{H}^{n-\pm}_{X_{M},N}(M)=\{\delta_{X_M}\lambda
\in \mathcal{H}^{n-\pm}_{X_{M},N}(M) \mid \lambda \in
\Omega^{n-\mp}_{G}(M) \}$. Now, using this fact together with
corollary \ref{coro.2.6}(2) we conclude that
$BH^{\pm}_{X_M}(M,\partial M)\cong i^* \star
\mathcal{E}\mathcal{H}^\pm_{X_{M},D}(M)$.

Now, we adapt Shonkwiler's map \cite{Clay2} but in terms of our
operators in order to define the following map with notation as
above
\begin{equation}
\phi  \overline{\cup}_{X_M} \psi=\Lambda_{X_M}(\pm
\phi\wedge\Lambda_{X_M}^{-1}\psi), \quad \forall (\phi,\psi) \in
i^{*}\mathcal{H}^{\pm}_{X_M,N}(M)\times
i^{*}\mathcal{H}^{n-(\pm)}_{X_M,N}(M)
\end{equation}
 By using the same method as \cite{Clay2} but together with our definition \ref{def.DN operator} we deduce that
$\overline{\cup}_{X_M}$ is well-defined.

So, our partial answer to the second question is that: restricting $
H^{\pm}_{X_M}(M,\,\partial M)$ to $BH^{\pm}_{X_M}(M,\partial M)$ and
then we recover the mixed cup product by showing the commutativity
of the the following diagram.
\begin{theorem}\label{thm.comm.ring}
The diagram
\begin{equation}\label{eq.daigram 3}
   \begin{CD}
H^{\pm}_{X_M}(M)\times BH^{\pm}_{X_M}(M,\partial M)
@>\overline{\cup}>> BH^{\pm}_{X_M}(M,\partial M)\\
@VV(f,h)V @VVhV\\
i^{*}\mathcal{H}^\pm_{X_{M},N}(M)\times i^{*}
\star\mathcal{E}\mathcal{H}^{\pm}_{X_{M},D}(M)
@>\overline{{\cup}}_{X_M}>> i^{*}
\star\mathcal{E}\mathcal{H}^{\pm}_{X_{M},D}(M)
\end{CD}
\end{equation}
is commutative, where $f$ and $h$ are given in corollary
\ref{coro.2.6}.
\end{theorem}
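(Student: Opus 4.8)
The plan is to trace an element $([\alpha]_{(X_M,M)},[\beta]_{(X_M,M,\partial M)})$ around both routes of the diagram, using throughout the identifications set up just before the statement: the absolute class is represented by the Neumann $X_M$-harmonic field $\alpha\in\mathcal{H}^{\pm}_{X_M,N}(M)$ with $\phi=i^*\alpha$, the relative class (living in the boundary subspace) by a Dirichlet $X_M$-harmonic field $\beta\in\mathcal{EH}^{\pm}_{X_M,D}(M)$, i.e.\ $\beta=\d_{X_M}\varrho$ for some $\varrho$, with $\psi=i^*\star\beta$; and the product class by the Dirichlet $X_M$-harmonic field $\eta\in\mathcal{H}^{\pm}_{X_M,D}(M)$ determined by $\alpha\wedge\beta=\eta+\d_{X_M}\xi$ as in \eqref{eq.6.6}, with $\vartheta=i^*\star\eta$. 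Going down-then-across sends the pair to $\phi\,\overline{\cup}_{X_M}\psi=\Lambda_{X_M}(\pm\,\phi\wedge\Lambda_{X_M}^{-1}\psi)$; going across-then-down sends it to $h([\alpha\wedge\beta]_{(X_M,M,\partial M)})=h([\eta]_{(X_M,M,\partial M)})=i^*\star\eta=\vartheta$. So the content of the theorem is the identity $\Lambda_{X_M}(\pm\,\phi\wedge\Lambda_{X_M}^{-1}\psi)=\vartheta$.

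First I would produce a convenient preimage under $\Lambda_{X_M}$. Since $\beta=\d_{X_M}\varrho$ with $i^*\beta=0$ lies in $\mathcal{H}^{\pm}_{X_M,D}(M)$, set $\sigma=\star\beta\in\mathcal{H}^{n-(\pm)}_{X_M,N}(M)$; as in Remark \ref{remark2} and the proof of Lemma \ref{lema.2}/Theorem \ref{thm.main result}, one may choose a form $\omega$ with $\Delta_{X_M}\omega=0$, $\delta_{X_M}\omega=0$, and $\star\d_{X_M}\omega$ representing $\sigma$ up to sign, so that $i^*\omega$ is a genuine preimage: $\Lambda_{X_M}(i^*\omega)=i^*\star\d_{X_M}\omega=\pm\,\psi$. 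Thus $\Lambda_{X_M}^{-1}\psi=i^*\omega$ (modulo $\ker\Lambda_{X_M}=i^*\mathcal{H}_{X_M}(M)$, which, being traces of $X_M$-harmonic \emph{fields}, contributes nothing after the outer $\wedge\phi$ and $\Lambda_{X_M}$ by \eqref{eq.3.6}). Then $\phi\wedge\Lambda_{X_M}^{-1}\psi=i^*(\alpha\wedge\omega)$, and since $\alpha$ is $X_M$-closed and $\Delta_{X_M}(\alpha\wedge\omega)$ can be controlled, applying $\Lambda_{X_M}$ gives $i^*\star\d_{X_M}(\alpha\wedge\omega)=i^*\star(\pm\,\alpha\wedge\d_{X_M}\omega)=\pm\,i^*\star(\alpha\wedge\beta)$ after unwinding signs and using $\d_{X_M}\omega=\pm\star\beta$-type relations. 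Finally $i^*\star(\alpha\wedge\beta)=i^*\star(\eta+\d_{X_M}\xi)=i^*\star\eta=\vartheta$, because $i^*\star\d_{X_M}\xi$ vanishes: $\star\d_{X_M}\xi$ is $X_M$-co-exact and $i^*\star$ annihilates it (equivalently, $\d_{X_M}\xi$ satisfies the Dirichlet condition so its $\star$-image satisfies the Neumann condition only after projecting, and the exact part drops on the boundary).

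The main obstacle is bookkeeping the sign factors $(\pm1)^{n+1}$ and the parity shifts $n-(\pm)$ through the three applications of the Hodge star and of $\Lambda_{X_M}$ — in particular making sure that the sign $\pm$ built into the definition of $\overline{\cup}_{X_M}$ (inherited from Shonkwiler's map \eqref{clay.map}) is exactly the one that makes the diagram commute rather than commute up to sign; this is where one must be careful that the choices of representatives $\omega$ (well-defined only modulo $\mathcal{H}_{X_M}$) and $\xi$ (the $X_M$-exact part in \eqref{eq.6.6}) genuinely wash out, which is precisely the well-definedness of $\overline{\cup}_{X_M}$ already asserted via the argument of \cite{Clay2}. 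Granting those sign computations, the commutativity of \eqref{eq.daigram 3} follows exactly as in Shonkwiler's proof with $\d,\delta,\Delta,\Lambda$ replaced by $\d_{X_M},\delta_{X_M},\Delta_{X_M},\Lambda_{X_M}$ and ordinary forms replaced by invariant forms, since every structural ingredient used there — the Hodge–Morrey–Friedrichs decomposition, Green's formula \eqref{eq.Green's formula}, and the identification of $H^{\pm}_{X_M}(M,\partial M)$ with Dirichlet $X_M$-harmonic fields — has its $X_M$-analogue available from \cite{Our paper} and the earlier sections.
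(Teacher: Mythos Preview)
Your overall setup matches the paper's, but two steps fail. First, you compute $\Lambda_{X_M}(i^*(\alpha\wedge\omega))$ as $i^*\star\d_{X_M}(\alpha\wedge\omega)$; this is valid only if $\alpha\wedge\omega$ itself solves the \textsc{bvp} \eqref{eq.DN map bvp3}, and it does not: $\delta_{X_M}$ is not a derivation of $\wedge$, so even with $\alpha$ an $X_M$-harmonic field and $\delta_{X_M}\omega=0$ one has neither $\Delta_{X_M}(\alpha\wedge\omega)=0$ nor $i^*\delta_{X_M}(\alpha\wedge\omega)=0$ in general. Saying the Laplacian ``can be controlled'' is not an argument --- the genuine solution with boundary data $i^*(\alpha\wedge\omega)$ is some other form, and you have no handle on its $\d_{X_M}$. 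Second, the assertion $i^*\star\d_{X_M}\xi=0$ is simply false: $\xi\in\Omega^{\mp}_{G,D}$ gives $i^*\xi=0$ and hence $i^*\d_{X_M}\xi=0$, but this says nothing about the \emph{Neumann} trace $i^*\star\d_{X_M}\xi$; your justification conflates the Dirichlet and Neumann conditions under~$\star$.

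The paper's proof sidesteps both issues by never trying to evaluate $\Lambda_{X_M}$ on the product via the defining \textsc{bvp}. Writing $\beta=\d_{X_M}\beta_1$ with $\beta_1$ chosen (via the $X_M$-Hodge--Morrey decomposition) to solve the \textsc{bvp}, so that $\Lambda_{X_M}^{-1}\psi=i^*\beta_1$, one observes that $\eta=\d_{X_M}(\pm\alpha\wedge\beta_1-\xi)$ and replaces this primitive by another one, $\sigma$, that \emph{does} solve the \textsc{bvp}; thus $\Lambda_{X_M}i^*\sigma=i^*\star\eta$. The difference $\pm\alpha\wedge\beta_1-\sigma-\xi$ is then $\d_{X_M}$-closed, so by \eqref{X_M.H.M} its boundary trace lies in $\mathcal{E}^{\mp}_{X_M}(\partial M)+i^*\mathcal{H}^{\mp}_{X_M}(M)=\ker\Lambda_{X_M}$ (using \eqref{eq.3.6} and Lemma \ref{lema.4}). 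Since also $i^*\xi=0$, linearity of $\Lambda_{X_M}$ gives
\[
\Lambda_{X_M}(\pm\phi\wedge\Lambda_{X_M}^{-1}\psi)=\Lambda_{X_M}i^*(\pm\alpha\wedge\beta_1)=\Lambda_{X_M}i^*\sigma=i^*\star\eta.
\]
The key idea you are missing is that $\Lambda_{X_M}$ of the product is computed not by solving the \textsc{bvp} for it, but by decomposing the product into pieces on which $\Lambda_{X_M}$ is either already known (the $\sigma$-piece) or vanishes (the $\ker\Lambda_{X_M}$-piece).
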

\begin{proof}
Our goal is then to show that $\forall
([\alpha],[\d_{X_M}\beta_{1}]) \in H^{\pm}_{X_M}(M)\times
BH^{\pm}_{X_M}(M,\partial M)$ then
\begin{equation}\label{eq.6.7}
(h\circ
\overline{{\cup}})([\alpha],[\d_{X_M}\beta_{1}])=(\overline{\cup}_{X_M}\circ
(f,h))([\alpha],[\d_{X_M}\beta_{1}]).
\end{equation}
Using eq.(\ref{eq.6.6}), then the left-hand side gives
\begin{eqnarray}\label{eq.6.8}
  h(\overline{{\cup}}([\alpha],[\d_{X_M}\beta_{1}]))&=& h([\alpha \wedge\d_{X_M}\beta_{1}]) \nonumber\\
   &=&h([\d_{X_M}(\pm \alpha \wedge\beta_{1})]) \nonumber \\
   &=&h([\d_{X_M}(\pm \alpha \wedge\beta_{1}-\xi)] )\nonumber \\
   &=& i^{*} \star \eta
\end{eqnarray}
while the right-hand side gives
\begin{eqnarray}\label{eq.6.9}
\overline{\cup}_{X_M}((f([\alpha]) ,h( [\d_{X_M}\beta_{1}])))&=&
\overline{\cup}_{X_M}(i^{*}\alpha,i^{*}\star \d_{X_M}\beta_{1})\nonumber\\
&=&\Lambda_{X_M}(\pm \phi\wedge\Lambda_{X_M}^{-1}\psi)
\end{eqnarray}
where $\phi=i^{*}\alpha $ and $\psi=i^{*}\star \d_{X_M}\beta_{1}$.
Now, we only need to show that eq.(\ref{eq.6.8}) and
eq.(\ref{eq.6.9}) are equal.

Putting, $\beta=\d_{X_M}\beta_{1} \in
\mathcal{E}\mathcal{H}^{\pm}_{X_{M},D}(M)$ and using the
$X_M$-Hodge-Morrey decomposition theorem (\ref{X_M.H.M}), we infer that $\beta_{1}$ can be chosen to solve the \textsc{bvp} $$\Delta_{X_M}\nu=0, \quad
i^{*}\nu=i^{*}\beta_{1}, \quad i^{*}\delta_{X_M}\nu=0.
$$
Hence,$$\psi=i^{*}\star
\d_{X_M}\beta_{1}=\Lambda_{X_M}i^{*}\beta_{1}.$$ Therefore,
$\Lambda^{-1}_{X_M}\psi=i^{*}\beta_{1}$. But from eq.(\ref{eq.6.6})
we get that $$\eta=\d_{X_M}\eta' \in \mathcal{E}\mathcal{H}^{\pm}_{X_{M},D}(M)$$
where $ \eta'=\pm \alpha \wedge\beta_{1}-\xi$. Applying the $X_M$-Hodge-Morrey decomposition theorem (\ref{X_M.H.M}) on $\eta'$, we infer that $$\eta=\d_{X_M}\eta'=\d_{X_M}\sigma$$ such that $\sigma$ solves the \textsc{bvp}
$$ \Delta_{X_M} \epsilon=0,\quad i^{*} \epsilon=i^{*}\sigma, \quad i^{*}\delta_{X_M}\epsilon=0.$$
Hence,
\begin{equation}\label{eq.6.12}
    \Lambda_{X_M}i^*\sigma=i^*\star \d_{X_M}\sigma=i^* \star\eta
\end{equation}

Since $ \eta'=\pm \alpha \wedge\beta_{1}-\xi$ implies
\begin{eqnarray}\label{eq.6.13}
  \d_{X_M}(\pm  \alpha \wedge\beta_{1}) &=& \d_{X_M}\eta'+\d_{X_M}\xi \nonumber \\
   &=& \d_{X_M}\sigma+\d_{X_M}\xi.
\end{eqnarray}
Equation (\ref{eq.6.13}) shows that the class $[\pm  \alpha \wedge\beta_{1}-\sigma-\xi] \in H^{\mp}_{X_M}(M)$, so the form $\pm  \alpha \wedge\beta_{1}-\sigma-\xi$ can be decomposed as
$$\pm  \alpha \wedge\beta_{1}-\sigma-\xi=\d_{X_M}\tau_1+\tau_2 \in \mathcal{E}^{\mp}_{X_M}(M)\oplus \mathcal{H}^{\mp}_{X_{M}}(M).$$
Now, restricting the latter equation to the boundary and using Lemma \ref{lema.4}, this implies that  $$\Lambda_{X_M}i^*(\pm  \alpha \wedge\beta_{1}-\sigma-\xi)=\Lambda_{X_M}i^*\tau_2=0.$$

Combining this with equation (\ref{eq.6.12}) gives that
\begin{eqnarray}\label{eq.6.14}
 \Lambda_{X_M}i^*(\pm  \alpha \wedge\beta_{1}) &=& \Lambda_{X_M}i^*(\pm  \alpha \wedge\beta_{1}-\sigma-\xi+\sigma+\xi)\nonumber \\
   &=& \Lambda_{X_M}i^*(\pm  \alpha \wedge\beta_{1}-\sigma-\xi)+\Lambda_{X_M}i^*\sigma\nonumber \\
  \Lambda_{X_M}(\pm \phi\wedge\Lambda_{X_M}^{-1}\psi) &=& i^* \star\eta
\end{eqnarray}
Hence, the diagram (\ref{eq.daigram 3}) is commutative as desired.
\end{proof}

We can restate theorem \ref{thm.comm.ring} in the language of our
boundary data $(\partial M,\Lambda_{X_M})$ to be as follows:

\begin{theorem}\label{thm.recover ring}
The boundary data $(\partial M,\Lambda_{X_M})$ completely determines
the mixed cup product structure of the $X_M$-cohomology when the
relative $X_M$-cohomology classes comes from the boundary subspace.
i.e. if $(\alpha,\beta)\in
\mathcal{H}^{\pm}_{X_M,N}(M)\oplus\mathcal{E}\mathcal{H}^{\pm}_{X_{M},D}(M)$
such that $ \alpha\wedge\beta=\eta+d_{X_M}\xi \in
\mathcal{H}^{\pm}_{X_M,D}(M) \oplus
    \mathcal{E}^{\pm}_{X_M}(M)$ then
$$i^{*} \star\eta =\Lambda_{X_M}(\pm \phi\wedge\Lambda_{X_M}^{-1}\psi) $$
where $\phi=i^*\alpha$ and $\psi=i^*\star\beta$.
\end{theorem}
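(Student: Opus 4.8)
The plan is to prove Theorem \ref{thm.recover ring} by simply restating Theorem \ref{thm.comm.ring} in the explicit language of $X_M$-harmonic fields and boundary forms, so that no new mathematical content is required beyond unwinding the isomorphisms $f$ and $h$ of Corollary \ref{coro.2.6}. The two theorems are logically equivalent: the commutativity of diagram (\ref{eq.daigram 3}) says precisely that for every pair $([\alpha],[\d_{X_M}\beta_1])$ the two ways around the square agree, and by Corollary 3.17 of \cite{Our paper} we may (and do) choose the canonical harmonic representatives $\alpha\in\mathcal{H}^{\pm}_{X_M,N}(M)$ and $\beta=\d_{X_M}\beta_1\in\mathcal{EH}^{\pm}_{X_M,D}(M)$ in each class.

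First I would recall the identifications already established: $f(i^*\alpha)=[\alpha]_{(X_M,M)}$, so $f$ sends the boundary datum $\phi=i^*\alpha$ to the absolute class; and $h(i^*\star\eta)=[\star(\star\eta)]=[\eta]_{(X_M,M,\partial M)}$ (using $\star\star=\pm1$ absorbed into the sign conventions), so $h$ sends $i^*\star\eta$ to the relative class of $\eta$. Under these identifications the right-hand vertical map of (\ref{eq.daigram 3}) carries $\Lambda_{X_M}(\pm\phi\wedge\Lambda_{X_M}^{-1}\psi)$, which by Theorem \ref{thm.comm.ring} equals $i^*\star\eta$, to the relative class $[\eta]_{(X_M,M,\partial M)}=[\alpha\wedge\beta]_{(X_M,M,\partial M)}$. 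Thus the statement ``$i^*\star\eta=\Lambda_{X_M}(\pm\phi\wedge\Lambda_{X_M}^{-1}\psi)$ with $\phi=i^*\alpha$, $\psi=i^*\star\beta$'' is exactly the equality of the two composites in the diagram, read off at the level of the Neumann $X_M$-trace spaces.

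Concretely, the proof reads: given $(\alpha,\beta)$ as in the hypothesis, decompose $\alpha\wedge\beta=\eta+\d_{X_M}\xi$ as in (\ref{eq.6.6}) with $\eta\in\mathcal{H}^{\pm}_{X_M,D}(M)$ the unique Dirichlet $X_M$-harmonic field in the relative class, and set $\phi=i^*\alpha$, $\psi=i^*\star\beta$. Then $(f,h)([\alpha],[\beta])=(\phi,\psi)$ by definition of $f$ and $h$, and $\overline{\cup}_{X_M}(\phi,\psi)=\Lambda_{X_M}(\pm\phi\wedge\Lambda_{X_M}^{-1}\psi)$ by definition of $\overline{\cup}_{X_M}$. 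On the other hand $h(\overline{\cup}([\alpha],[\beta]))=h([\alpha\wedge\beta]_{(X_M,M,\partial M)})=h([\eta]_{(X_M,M,\partial M)})=i^*\star\eta$. The commutativity of (\ref{eq.daigram 3}) established in Theorem \ref{thm.comm.ring} forces these two outputs to coincide, giving $i^*\star\eta=\Lambda_{X_M}(\pm\phi\wedge\Lambda_{X_M}^{-1}\psi)$. Finally, I would remark that every object on the right-hand side — the operator $\Lambda_{X_M}$, the exterior product on $\Omega_G(\partial M)$, and the inverse $\Lambda_{X_M}^{-1}$ (well-defined on $i^*\mathcal{H}_{X_M}(M)$ by Corollary \ref{coro. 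Hilbert trans.}) — is manifestly part of, or reconstructible from, the boundary data $(\partial M,\Lambda_{X_M})$, which is what ``completely determines'' means here.

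The only genuine subtlety, and the point I would be most careful about, is bookkeeping of the various $\pm$ and $(\mp1)^n$ signs: the Hodge-star squaring sign, the sign in $\eta'=\pm\alpha\wedge\beta_1-\xi$, and the sign built into the definitions of $f$, $h$, and $\overline{\cup}_{X_M}$ must all be tracked so that the displayed identity has the correct leading sign $\pm$ matching the conventions of Theorem \ref{thm.comm.ring}. Since all the analytic work — solvability of the boundary value problems, the choice of harmonic representatives, and the commutativity itself — has already been done in Theorem \ref{thm.comm.ring} and its proof, there is no remaining obstacle of substance; this theorem is a translation, and the proof is short.
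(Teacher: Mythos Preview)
Your proposal is correct and matches the paper's approach exactly: the paper gives no separate proof of Theorem~\ref{thm.recover ring}, introducing it with the sentence ``We can restate theorem~\ref{thm.comm.ring} in the language of our boundary data $(\partial M,\Lambda_{X_M})$ to be as follows,'' since the key identity $\Lambda_{X_M}(\pm\phi\wedge\Lambda_{X_M}^{-1}\psi)=i^*\star\eta$ is precisely equation~(\ref{eq.6.14}) already established in the proof of Theorem~\ref{thm.comm.ring}. Your unwinding of the isomorphisms $f$ and $h$ just makes this restatement explicit.
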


\section{Conclusions}
\begin{itemize}
\item[1-] The key which uses to recover the free part of the relative and
absolute equivariant cohomology groups (i.e. $H_{G}^\pm(M)$ and
$H_{G}^\pm(M,\partial M)$) from our boundary data $(\partial
M,\Lambda_{X_M})$ is the following theorem which has been proved in
\cite{Our paper} based on Atiyah and Bott's localization theorem:
\begin{theorem}[\cite{Our paper}]\label{relative equivariant}
Let $\{X_1,\dots,X_\ell\}$ be a basis of the Lie algebra $\gg$ and
$\{u_1,\dots,u_\ell\}$ the corresponding coordinates and let  $X =
\sum_j s_jX_j\in \gg$.  If the set of zeros $N(X_M)$ of the
corresponding vector field $X_M$ is equal to the fixed point set $F$
for the $G$-action then
\begin{equation}\label{eq.relative iso1.}
H_{X_M}^\pm(M,\,\partial M)\cong H_{G}^\pm(M,\partial
M)/\mathfrak{m}_X H_{G}^\pm(M,\partial M)\cong H^\pm(F,\partial F),
\end{equation}
and
\begin{equation}\label{eq.relative iso2.}
    H_{X_M}^\pm(M)\cong H_{G}^\pm(M)/\mathfrak{m}_X
    H_{G}^\pm(M)\cong H^\pm(F)
\end{equation}
where $\mathfrak{m}_X = \left<u_1-s_1,\dots,u_l-s_l\right>$ is the
ideal of polynomials vanishing at $X$.
\end{theorem}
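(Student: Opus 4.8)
The plan is to identify $H_{X_M}^{\pm}$ with the reduction modulo $\mathfrak{m}_X$ of equivariant cohomology, and then to use the Atiyah--Bott localization theorem to make that reduction exact. Write $S=\mathbb{R}[u_1,\dots,u_\ell]$, so that $\mathrm{Spec}\,S=\gg$ and $\mathfrak{m}_X$ is the maximal ideal of the point $X$. By the Cartan model, $H_G^{\pm}(M)$ is the cohomology of the complex $\bigl(\Omega_G(M)\otimes_{\mathbb{R}}S,\;d_G\bigr)$, where $d_G=\d+\sum_j u_j\,\iota_{(X_j)_M}$ (which squares to zero on invariant forms, since the Lie derivatives $L_{(X_j)_M}$ vanish there), and the $\mathbb{Z}$-grading is collapsed to the parity grading by assigning each $u_j$ degree $2$. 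Specializing $u_j\mapsto s_j$ identifies $\bigl(\Omega_G(M)\otimes_{\mathbb{R}}S\bigr)\otimes_S S/\mathfrak{m}_X$ with $\bigl(\Omega_G(M),\,\d_{X_M}\bigr)$, because $d_G$ becomes exactly $\d+\iota_{X_M}$; so $H_{X_M}^{\pm}(M)$ is the cohomology of the derived reduction of the Cartan complex modulo $\mathfrak{m}_X$.

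First I would set up the universal-coefficients (hyper-$\mathrm{Tor}$) spectral sequence $\mathrm{Tor}^S_p\bigl(H_G^{*}(M),\,S/\mathfrak{m}_X\bigr)\Rightarrow H_{X_M}^{*}(M)$, which is legitimate and convergent because $\Omega_G(M)\otimes_{\mathbb{R}}S$ is a complex of free (hence flat) $S$-modules, $S/\mathfrak{m}_X$ has a finite (Koszul) resolution, and, $M$ being compact, $H_G^{\pm}(M)$ is finitely generated over the Noetherian ring $S$. Since $S/\mathfrak{m}_X\cong\mathbb{R}$ is supported only at the closed point $\mathfrak{m}_X$, each term $\mathrm{Tor}^S_p\bigl(H_G^{*}(M),\,S/\mathfrak{m}_X\bigr)$ equals its own localization at $\mathfrak{m}_X$, so the whole computation is governed by the $S_{\mathfrak{m}_X}$-module $H_G^{\pm}(M)_{\mathfrak{m}_X}$.

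The heart of the matter is localization. The restriction $i_F^{*}\colon H_G^{\pm}(M)\to H_G^{\pm}(F)=H^{\pm}(F)\otimes_{\mathbb{R}}S$ has kernel and cokernel annihilated by a power of the product of the weights $\alpha$ occurring in the $G$-action on the normal bundles of the components of $F$; hence $\mathrm{supp}(\ker i_F^{*})\cup\mathrm{supp}(\mathrm{coker}\,i_F^{*})$ is contained in the finite union $\bigcup_\alpha\{\alpha=0\}$ of hyperplanes of $\gg$. The hypothesis $N(X_M)=F$ says precisely that $\alpha(X)\neq0$ for every such normal weight $\alpha$ — a zero $p\notin F$ of $X_M$ would lie in a positive-dimensional normal weight space whose weight vanishes on $X$, and conversely — i.e.\ $\mathfrak{m}_X$ avoids all these hyperplanes. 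Consequently $i_F^{*}$ localizes to an isomorphism $H_G^{\pm}(M)_{\mathfrak{m}_X}\stackrel{\sim}{\longrightarrow}H^{\pm}(F)\otimes_{\mathbb{R}}S_{\mathfrak{m}_X}$, which is a \emph{free} $S_{\mathfrak{m}_X}$-module. Therefore $\mathrm{Tor}^S_p\bigl(H_G^{\pm}(M),S/\mathfrak{m}_X\bigr)=\mathrm{Tor}^{S_{\mathfrak{m}_X}}_p\bigl(H_G^{\pm}(M)_{\mathfrak{m}_X},\,\mathbb{R}\bigr)=0$ for all $p\geq1$, the spectral sequence collapses, and $H_{X_M}^{\pm}(M)\cong H_G^{\pm}(M)\otimes_S S/\mathfrak{m}_X=H_G^{\pm}(M)/\mathfrak{m}_X H_G^{\pm}(M)$; reducing the displayed localization isomorphism modulo $\mathfrak{m}_X$ gives the second isomorphism $H_G^{\pm}(M)/\mathfrak{m}_X H_G^{\pm}(M)\cong H^{\pm}(F)$. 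This proves (\ref{eq.relative iso2.}).

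For the relative statement (\ref{eq.relative iso1.}) I would repeat the argument with the relative Cartan complex $\bigl(\Omega_{G,D}(M)\otimes_{\mathbb{R}}S,\;d_G\bigr)$ — here $d_G$ preserves the Dirichlet condition because $X_M$ is tangent to $\partial M$, so $\iota_{X_M}$ commutes with $i^{*}$ — whose cohomology is $H_G^{\pm}(M,\partial M)$ and whose reduction modulo $\mathfrak{m}_X$ is the relative Witten complex computing $H_{X_M}^{\pm}(M,\partial M)$. Since $X_M$ is tangent to $\partial M$, $N(X_M|_{\partial M})=N(X_M)\cap\partial M=F\cap\partial M=\partial F$, so the absolute case applies verbatim to $\partial M$; upgrading the absolute localizations for $M$ and $\partial M$ to a localization $H_G^{\pm}(M,\partial M)_{\mathfrak{m}_X}\cong H^{\pm}(F,\partial F)\otimes_{\mathbb{R}}S_{\mathfrak{m}_X}$ for the pair (five lemma applied to the long exact sequence of $(M,\partial M)$ in equivariant cohomology localized at $\mathfrak{m}_X$, localization being exact), one reruns the hyper-$\mathrm{Tor}$ collapse to conclude. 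The main obstacle I expect is the careful identification of $\mathrm{supp}(\ker i_F^{*})\cup\mathrm{supp}(\mathrm{coker}\,i_F^{*})$ with a union of weight hyperplanes and the verification that ``$X$ avoids every such hyperplane'' is equivalent to ``$N(X_M)=F$''; a secondary, routine point is justifying the flatness and convergence of the hyper-$\mathrm{Tor}$ spectral sequence in the a priori infinite-dimensional de Rham setting, which is handled by the grading by $u$-degree together with the finite generation of $H_G^{\pm}(M)$ over $S$.
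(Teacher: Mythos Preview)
The paper does not itself prove this theorem: it is quoted from the authors' companion paper \cite{Our paper}, and the only information given here is that the proof there is ``based on Atiyah and Bott's localization theorem.'' Your outline is precisely an Atiyah--Bott localization argument, so it is consistent with (and presumably close to) the intended proof; the ingredients you invoke --- Cartan model, specialization $u_j\mapsto s_j$ identifying $d_G$ with $\d_{X_M}$, the hyper-$\mathrm{Tor}$ / universal-coefficients spectral sequence, freeness of $H_G^{\pm}(M)_{\mathfrak{m}_X}$ via localization to $F$, and the five-lemma upgrade to the relative case --- are the standard ones and the sketch is sound.

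Two small points to tighten. First, the cleanest way to phrase ``$N(X_M)=F$ implies $\mathfrak{m}_X$ avoids the support of $\ker i_F^{*}$ and $\mathrm{coker}\,i_F^{*}$'' is via stabilizers rather than normal weights of $F$: that support lies in the union, over $p\in M\setminus F$, of the subspaces $\mathrm{Lie}(G_p)\subset\gg$, and $X_M(p)=0$ exactly says $X\in\mathrm{Lie}(G_p)$; so $N(X_M)=F$ is literally the statement that $X$ misses every such proper stabilizer subalgebra. Your weight-hyperplane description is correct but requires an extra step to connect arbitrary isotropy to the normal weights along $F$. Second, in the relative argument you silently use $\partial F = F\cap\partial M$ and that the $G$-fixed set of $\partial M$ is $F\cap\partial M$; both hold because $G$ acts by isometries (hence preserves $\partial M$ and geodesics normal to it), but it is worth saying so. With these clarifications the proposal is correct.
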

Now, combining the above theorem with theorem \ref{thm.recover
additive}, we get

\begin{theorem}\label{equiv. and boundary}
With the hypotheses of the theorem \ref{relative equivariant},
$$H_{G}^\pm(M,\partial M)/\mathfrak{m}_X H_{G}^\pm(M,\partial
M)\cong H^\pm(F,\partial F)\cong (\Lambda_{X_M}-
(\pm1)^{n+1}\d_{X_M}\Lambda^{-1}_{X_M} \d_{X_M})
\Omega^{\mp}_{G}(\partial M)$$ and $$H_{G}^\pm(M)/\mathfrak{m}_X
    H_{G}^\pm(M)\cong H^\pm(F)\cong (\Lambda_{X_M}-
(\pm1)^{n+1}\d_{X_M}\Lambda^{-1}_{X_M} \d_{X_M})
\Omega^{n-(\mp)}_{G}(\partial M)$$
\end{theorem}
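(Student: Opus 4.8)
The plan is to obtain Theorem~\ref{equiv. and boundary} simply by chaining together two results that are already in hand. The first ingredient is Theorem~\ref{relative equivariant}, which (under the hypothesis $N(X_M)=F$) produces the isomorphisms $H_{G}^\pm(M,\partial M)/\mathfrak{m}_X H_{G}^\pm(M,\partial M)\cong H^\pm(F,\partial F)\cong H_{X_M}^\pm(M,\partial M)$ and $H_{G}^\pm(M)/\mathfrak{m}_X H_{G}^\pm(M)\cong H^\pm(F)\cong H_{X_M}^\pm(M)$. The second ingredient is Theorem~\ref{thm.recover additive}, in particular the identifications \eqref{eq.6.3} and \eqref{eq.6.4}, which express $H_{X_M}^\pm(M)$ and $H_{X_M}^\pm(M,\partial M)$ as the ranges of the operator $\Lambda_{X_M}-(\pm1)^{n+1}\d_{X_M}\Lambda^{-1}_{X_M}\d_{X_M}$ on the appropriate parity of forms on $\partial M$. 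Composing the relevant isomorphisms gives exactly the two displayed chains.

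Concretely, I would first recall the hypothesis $N(X_M)=F$ and invoke Theorem~\ref{relative equivariant} to write $H_{G}^\pm(M,\partial M)/\mathfrak{m}_X H_{G}^\pm(M,\partial M)\cong H^\pm(F,\partial F)\cong H_{X_M}^\pm(M,\partial M)$, and similarly for the absolute groups. Next I would apply Theorem~\ref{thm.recover additive}: \eqref{eq.6.4} with the sign convention there gives $H_{X_M}^\pm(M,\partial M)\cong(\Lambda_{X_M}-(\pm1)^{n+1}\d_{X_M}\Lambda^{-1}_{X_M}\d_{X_M})\,\Omega^{\mp}_{G}(\partial M)$, and \eqref{eq.6.3} gives $H_{X_M}^\pm(M)\cong(\Lambda_{X_M}-(\pm1)^{n+1}\d_{X_M}\Lambda^{-1}_{X_M}\d_{X_M})\,\Omega^{n-(\mp)}_{G}(\partial M)$. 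Transitivity of $\cong$ then yields both assertions of the theorem verbatim.

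The only point requiring a little care — and hence the main (mild) obstacle — is bookkeeping of the parity indices and of the signs $(\pm1)^{n+1}$ versus $(\mp1)^{n+1}$ as one passes between the $+$ and $-$ statements, so that the parity of the space of boundary forms matches on both sides of the chain; this is purely a matter of tracking the $n-(\pm)$ notation consistently through \eqref{eq.6.3}–\eqref{eq.6.4} and \eqref{eq.relative iso1.}–\eqref{eq.relative iso2.}. Since no new analytic or topological input is needed beyond the two quoted theorems, the proof is essentially a one-line composition, and it is most honestly presented as such: "This is immediate on combining Theorem~\ref{relative equivariant} with Theorem~\ref{thm.recover additive}."
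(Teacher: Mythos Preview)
Your proposal is correct and matches the paper's own treatment exactly: the paper introduces Theorem~\ref{equiv. and boundary} with the sentence ``Now, combining the above theorem with theorem~\ref{thm.recover additive}, we get'' and provides no further argument. Your identification of the two ingredients (Theorem~\ref{relative equivariant} and the isomorphisms \eqref{eq.6.3}--\eqref{eq.6.4}) and your remark that the only care needed is parity/sign bookkeeping are precisely on target.
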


since the Neumann $X_M$-harmonic fields are uniquely determined by
their Neumann $X_M$- trace spaces (corollary \ref{coro.2.6}) which
can be completely determined by our boundary data $(\partial
M,\Lambda_{X_M})$(theorem \ref{thm.main result}), this means that we
can conclude from theorem \ref{thm.recover additive} and theorem
\ref{equiv. and boundary} that we can realize the relative and
absolute $X_M$-cohomology groups and the free part of the relative
and absolute equivariant cohomology groups as particular subspaces
of invariant differential forms on $\partial M$ and they are not
just determined abstractly from our boundary data.

\item[2-] If $N(X_M)=F$ then we can apply Belishev
and Sharafutdinov's results \cite{Belishev2} (our theorem \ref{B-sh
them1 }) to the manifolds $F$ with boundary $\partial F$ where $G$
acts trivially on $F$ and then we use theorem \ref{equiv. and
boundary} to exploit the connection between Belishev and
Sharafutdinov's boundary data on $\partial F$ (i.e. $(\partial
F,\Lambda)$) and ours on $\partial M$ (i.e. $(\partial
M,\Lambda_{X_M})$). More concretely, we have the following theorem
\begin{theorem}
If $N(X_M)=F$, then $$(\Lambda_{X_M}-
(\mp1)^{n+1}\d_{X_M}\Lambda^{-1}_{X_M} \d_{X_M})
\Omega^{\pm}_{G}(\partial M)\cong (\Lambda-
(\mp1)^{n+1}\d\Lambda^{-1} \d) \Omega^{\pm}(\partial F).$$
\end{theorem}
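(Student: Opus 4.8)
The plan is to identify each side of the asserted isomorphism with one and the same parity-graded piece of the relative de Rham cohomology of the fixed-point set $F$, and then to compose the two identifications. Because $N(X_M)=F$, the set $F$ is a compact manifold with boundary $\partial F=F\cap\partial M$ on which $X_M$ vanishes identically and $G$ acts trivially; moreover each connected component $F_i$ of $F$ is orientable, since its normal bundle in $M$ carries a $G$-invariant complex structure (hence is orientable) and $M$ is oriented, and for the same reason $\mathrm{codim}_M F_i$ is even, so that $d_i:=\dim F_i$ has the same parity as $n$. This last fact is the only genuinely geometric input; the rest is assembling earlier results and keeping the degree shifts straight.

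For the left-hand side I would invoke Theorem~\ref{equiv. and boundary}, which applies precisely because $N(X_M)=F$: replacing the free parity symbol $\pm$ by $\mp$ in its relative isomorphism yields
\[
(\Lambda_{X_M}-(\mp1)^{n+1}\d_{X_M}\Lambda^{-1}_{X_M}\d_{X_M})\,\Omega^{\pm}_{G}(\partial M)\;\cong\;H^{\mp}(F,\partial F).
\]
(One may also see this directly by chaining Theorem~\ref{thm.main result}, which equates the left-hand range with $i^{*}\mathcal{H}^{n-(\mp)}_{X_M,N}(M)$, together with Corollary~\ref{coro.2.6}(1), $X_M$-Poincar\'e-Lefschetz duality, and Theorem~\ref{relative equivariant}.)

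For the right-hand side I would first observe that over $\partial F$ the operators $\d_{X_M}$ and $\Lambda_{X_M}$ reduce to the ordinary exterior derivative $\d$ and to the classical Belishev-Sharafutdinov DN operator $\Lambda$ of the Riemannian manifold $F$, since $X_M|_F=0$ and $\Lambda_0=\Lambda$. A short sign check --- writing $(\mp1)$ for $(-1)^{k+1}$ on forms of degree $k$ --- gives $-(\mp1)^{n+1}=(-1)^{nk+k+n}$, which equals $(-1)^{d_i k+k+d_i}$ because $d_i$ and $n$ have the same parity; this is precisely the coefficient occurring in Theorem~\ref{B-sh them1 } for the $d_i$-dimensional component $F_i$. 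Thus Theorem~\ref{B-sh them1 }, applied to each $F_i$, identifies the range of $\Lambda-(\mp1)^{n+1}\d\Lambda^{-1}\d$ on $\Omega^{k}(\partial F_i)$ with $i^{*}\mathcal{H}^{d_i-k-1}_{N}(F_i)$, which by Hodge theory and Poincar\'e-Lefschetz duality on $F_i$ (see \cite{Schwarz}) is $\cong H^{d_i-k-1}(F_i)\cong H^{k+1}(F_i,\partial F_i)$. Summing over all degrees $k$ of parity $\pm$ and over the components, and noting that $k+1$ has parity $\mp$, one gets $(\Lambda-(\mp1)^{n+1}\d\Lambda^{-1}\d)\,\Omega^{\pm}(\partial F)\cong H^{\mp}(F,\partial F)$, which by the display above is isomorphic to the left-hand side.

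So the proof is essentially a bookkeeping exercise gluing Theorems~\ref{thm.main result}, \ref{relative equivariant} and \ref{B-sh them1 } to the Hodge-Morrey-Friedrichs theory on $F$. The step I expect to require the most care is the sign matching: the single exponent written in terms of $n=\dim M$ must reproduce, component by component, the correct degree-dependent Belishev-Sharafutdinov sign for the $d_i$-dimensional manifold $F_i$, and it does so only because $\mathrm{codim}_M F_i$ is even. One should also keep in mind that Theorem~\ref{B-sh them1 } presupposes a compact oriented manifold with nonempty boundary, so the argument as written needs every component of $F$ to meet $\partial M$ (in addition to the orientability already recorded above) --- an implicit standing hypothesis on the pair $(M,G)$.
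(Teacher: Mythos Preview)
Your approach is exactly the one the paper indicates: apply Theorem~\ref{equiv. and boundary} (which packages Theorems~\ref{thm.main result} and~\ref{relative equivariant}) to identify the left-hand range with $H^{\mp}(F,\partial F)$, and apply Belishev--Sharafutdinov's Theorem~\ref{B-sh them1 } to $F$ to do the same for the right-hand range. The paper presents this theorem only as an immediate consequence of those two results and gives no further argument, so your write-up is in fact more detailed than the original --- in particular your verification that the single sign $(\mp1)^{n+1}$ matches the degree-dependent Belishev--Sharafutdinov sign component by component (using that $\mathrm{codim}_M F_i$ is even for a torus action), and your flagging of the orientability and nonempty-boundary hypotheses on the components of $F$, are points the paper passes over in silence.
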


\item[3-] Theorem \ref{Thm.interseactions} proves that our concrete realizations
$\mathcal{H}^{\pm}_{X_{M},N}(M) $ and $
\mathcal{H}^{\pm}_{X_{M},D}(M)$ of the absolute and relative
$X_M$-cohomology groups inside the space $ \Omega^{\pm}_{G}(M)$ meet
only at the origin which means that we can conclude the sum
$\mathcal{H}^{\pm}_{X_{M},N}(M)+ \mathcal{H}^{\pm}_{X_{M},D}(M)$ is
a direct sum and by using (\ref{eq.Green's formula}), we can prove
that the orthogonal complement of $\mathcal{H}^{\pm}_{X_{M},N}(M)+
\mathcal{H}^{\pm}_{X_{M},D}(M)$ inside $\mathcal{H}^{\pm}_{X_{M}}(M)
$ is $$\mathcal{H}^{\pm}_{X_{M},\mathrm{ex}}(M)\cap
\mathcal{H}^{\pm}_{X_{M},\mathrm{co}}(M)=
\mathcal{H}^{\pm}_{X_{M},\mathrm{ex,co}}(M)$$ Therefore, we can
refine our $X_M$-Friedrichs Decomposition (\ref{F1} and \ref{F2})
into $$\mathcal{H}^{\pm}_{X_{M}}(M) = (
\mathcal{H}^{\pm}_{X_{M},N}(M)+ \mathcal{H}^{\pm}_{X_{M},D}(M)
)\oplus \mathcal{H}^{\pm}_{X_{M},\mathrm{ex,co}}(M). $$Consequently,
we can refine the $X_M$-Hodge-Morrey-Friedrichs decompositions
(\ref{X_M-H.M.F}) into the following five terms decomposition:
$$\Omega^{\pm}_{G}(M) =\mathcal{E}^{\pm}_{X_M}(M)\oplus
\mathcal{C}^{\pm}_{X_M}(M)\oplus ( \mathcal{H}^{\pm}_{X_{M},N}(M)+
\mathcal{H}^{\pm}_{X_{M},D}(M) )\oplus
\mathcal{H}^{\pm}_{X_{M},\mathrm{ex,co}}(M). $$The idea of this
conclusion follows from \cite{Gluck}, see also \cite{Clay1} for
details.
\end{itemize}

Finally, it is worth considering the following important open
problem:

\emph{``Can the torsion part of the absolute and relative
equivariant cohomology groups be completely recovered from our
boundary data $(\partial M,\Lambda_{X_M})$?.'' }

Answering this open problem will indeed complete the picture of our
boundary data $(\partial M,\Lambda_{X_M})$ to be adding into the
list of objects of equivariant cohomology story and consequently to
the objects of algebraic topology.

\small

\bigskip

\parbox[t]{0.4\textwidth}{\sl School of Mathematics, \\
University of Manchester, \\
Oxford Road,\\
Manchester M13 9PL, \\
UK.}\\[12pt]

\parbox[t]{0.5\textwidth}{
\texttt{Qusay.Abdul-Aziz@postgrad.manchester.ac.uk}\\
\texttt{j.montaldi@manchester.ac.uk} }


\begin{thebibliography}{10}

\bibitem{Our paper}
{\sc Q. S. A. Al-Zamil, J. Montaldi }, {  Witten-Hodge theory for
manifolds with boundary and equivariant cohomology}.
http://eprints.ma.man.ac.uk/1521/. (2010).

\bibitem{Aronszajn1}
{\sc N. Aronszajn}, { A unique continuation theorem for solutions of
elliptic partial differential equations or inequalities of second
order.} \emph{J. Math. Pures et Appl.}, \textbf{36} (1957),
235--249.

\bibitem{Aronszajn}
{\sc N. Aronszajn, A. Krzywicki, and J. Szarski,}{ A unique
continuation theorem for exterior differential forms on Riemannian
manifolds}, \emph{Ark. Mat.} \textbf{4} (1962), 417--453.

\bibitem{Belishev}
{\sc M. Belishev},{ Some remarkson the impedence tomography problem
for 3d-manifold}, \emph{CUBO A Math. J.}, \textbf{7} (2005), 42--55.

\bibitem{Belishev1}
{\sc M. Belishev},{ The Calderon problem for two-dimensional
manifolds by the BC-method}, \emph{SIAM J. Math. Anal}, \textbf{35}
(2003), 172--182.

\bibitem{Belishev2}
{\sc M. Belishev and V. Sharafutdinov},  { Dirichlet to {N}eumann
operator on differential forms}, \emph{Bull. Sci. Math.},
\textbf{132} (2008), 128--145.

\bibitem{DeTurck}
{\sc S. Cappell, D. DeTurck, H. Gluck, and E. Y. Miller,} {
Cohomology of harmonic forms on Riemannian manifolds with boundary,}
\emph{Forum Math}. \textbf{18} (2006), 923--931,

\bibitem{Gluck}
{\sc D. DeTurck, H. Gluck,} { Poincar\'{e} duality angles and Hodge
decomposition for Riemannian manifolds,} Preprint, 2004.

\bibitem{Friedrichs}
{\sc K. O. Friedrichs}, { Differential forms on Riemannian
manifolds}. \emph{Comm. Pure Appl. Math.}, \textbf{8} (1955),
551--590.

\bibitem{Lionheart}
{\sc M. S. Joshi and W. R. B. Lionheart}, { An inverse boundary
value problem for harmonic differential forms}.
\emph{Asymptot.Anal}. \textbf{41} (2005), 93--106.

\bibitem{J. Kazdan}
{\sc J. Kazdan,}{ Unique continuation in geometry}, \emph{Comm. Pure
Appl. Math.}, \textbf{41} (1988),  667--681.

\bibitem{Lassas}
{\sc M. Lassas and G¨. Uhlmann}, { On determining a Riemannian
manifold from the Dirichlet-to-Neumann map}. \emph{Ann. Sci. ´Ecole
Norm. Sup.}  \textbf{34} (2001), 771--787.

\bibitem{Morrey}
{\sc C. B. Morrey, Jr.}, {A variational method in the theory of
harmonic integrals II}, \emph{Amer. J. Math.}, \textbf{78} (1956),
137--170.

\bibitem{Schwarz}
{\sc G.Schwarz}, { \emph{Hodge decomposition---A Method for solving
Boundary Value Problems}}. Lecture Notes in Mathematics,
Springer-Verlag, Berlin (1995).

\bibitem{Clay1}
{\sc C. Shonkwiler}, {Poincar\'{e} Duality Angles for Riemannian
Manifold with boundary}.
http://www.math.upenn.edu/grad/dissertations/ShonkwilerDissertation.pdf
(2009).

\bibitem{Clay2}
{\sc C. Shonkwiler}, {Poincar\'{e} Duality Angles for Riemannian
Manifold with boundary}. arXiv:0909.1967v1 [math.DG],  10 Sep. (2009).

\end{thebibliography}
\end{document}